\newtheorem{theorem}{Theorem}
\newtheorem{lemma}[theorem]{Lemma}
\theoremstyle{definition}
\newcommand{\twowidth}{0.4}             % constant width of two figures side-by-side
\newcommand{\onewidth}{0.7}             % constant width of a single figure
\newcommand{\R}{\mathbb R}																			% reals
\newcommand{\N}{\mathbb N}																			% positive integers
\newcommand{\Prob}[1][\R^d]{\mathcal P\left({#1}\right)}				% set of probabilities
\newcommand{\Fsets}{\mathcal F}																	% sigma-algebra (on \Omega)
\newcommand{\PP}{\mathsf P}																			% probability (on \Omega)
\newcommand{\as}{\mathrm {a.s.}}																% almost surely
\newcommand{\HD}{hD}																						% halfspace depth
\newcommand{\dd}{\,\mathrm{d}\,}																% differential
\newcommand{\vol}[2][d]{\operatorname{vol}_{#1}\left(#2\right)}	% volume in Euclidean spaces
\newcommand{\asa}[2][]{\operatorname{asa}_{#1}\left(#2\right)}		% affine surface area (with an optional subscript)
\newcommand{\B}[1][d]{B^{#1}}																		% unit ball
\newcommand{\tr}{^\mathsf{T}}																		% transposition
\newcommand{\eqd}{\stackrel{\mathclap{\mbox{\tiny{\emph{d}}}}}{=}}	% equal in distribution
\newcommand{\ID}{D_{\alpha_n}}                                      % illumination depth with alpha_n
\newcommand{\IDa}{D_\alpha}																					% illumination depth with alpha
\newcommand{\RHD}{RhD_\alpha}																		% refined halfspace depth
\newcommand{\co}[1]{\operatorname{co}\left(#1\right)}													% convex hull
\newcommand{\Ill}{\mathcal I}
\newcommand{\Ell}[1][\mu,\Sigma]{\mathcal E_{#1}}
\newcommand{\distg}{\mathsf{d}} 
\newcommand{\Haus}{\distg_H}		                                % Hausdorff distance
\newcommand{\dist}[1][\Sigma]{\distg_{#1}}						% Mahalanobis distance	
\author{Stanislav Nagy}%$^{1}$}
\author{Ji\v{r}\'i Dvo\v{r}\'ak}
\email{nagy@karlin.mff.cuni.cz}
\address{%$^1$
	Charles University, Prague,
	Faculty of Mathematics and Physics,
	Department of Probability and Math. Statistics,
	Czech Republic
}
\title{Illumination depth}
\date{\today}
\begin{document}

\begin{abstract}
The concept of illumination bodies studied in convex geometry is used to amend the halfspace depth for multivariate data. The proposed notion of illumination enables finer resolution of the sample points, naturally breaks ties in the associated depth-based ordering, and introduces a depth-like function for points outside the convex hull of the support of the probability measure. The illumination is, in a certain sense, dual to the halfspace depth mapping, and shares the majority of its beneficial properties. It is affine invariant, robust, uniformly consistent, and aligns well with common probability distributions.
\end{abstract}

\maketitle

%
% brief introduction
%

\section{Introduction}

Halfspace depth is a well known statistical tool that allows to define orders, ranks, and quantiles for multivariate datasets. Recent discoveries of connections between the depth and \emph{floating bodies} \cite{Brunel2019, Nagy_etal2018s} uncovered a vast body of knowledge on depth-like procedures in geometry and related fields. That section of mathematics, collected over the past 70~years, is little known in mathematical statistics. In this paper we focus on the paradigm of \emph{illumination} intimately connected to the depth, yet never studied with respect to its statistical applications. In convex geometry, illumination is known to be dual to the floating bodies (and, by extension, to the halfspace depth). We introduce it as a tool complementary to the halfspace depth, explore its statistical properties, and outline applications. We show that halfspace depth in conjunction with illumination allows to devise a nonparametric methodology similar to the depth, with many advantageous properties: \begin{enumerate*}[label=(\roman*)] \item conceptual and computational simplicity; \item full affine invariance; \item excellent robustness and large sample properties; \item the capacity of na\-tu\-ral\-ly breaking ties in data orderings; \item it can be used for the estimation of extreme quantile regions with efficiency comparable to the state-of-the-art approaches; \item it is well adjusted to elliptically symmetric distributions; and \item is powerful in applications such as classification.\end{enumerate*}

In Section~\ref{section:illumination bodies} we introduce illumination and motivate our research by drawing connections between illumination, floating bodies, and halfspace depth. The definition of the depth illumination and its properties are provided in Section~\ref{section:illumination depth}. The special case of elliptically symmetric distributions is treated in Section~\ref{section:elliptical distributions}. In Section~\ref{section:applications} we apply the new procedures to tie-breaking in depth-induced orderings, the estimation of extremal depth regions and in the classification task. Additional technical details, proofs and supplementary results from the simulation studies are gathered in the appendix.

%
%	illumination in geometry
%

\section{Illumination of convex bodies}	\label{section:illumination bodies}

Since its proposal by John W.~Tukey \cite{Tukey1975, Donoho1982}, the concept of the halfspace depth has occupied a prominent place in multivariate statistics. Its main idea is to rank the points in a $d$-dimensional Euclidean space $\R^d$, $d \geq 1$, according to their centrality as recognized with respect to (w.r.t.) a Borel probability measure $P$ on $\R^d$. The higher the depth of $x$ w.r.t. $P$ is, the more centrally located $x$ is within the probability mass of $P$. Points that maximize the depth over $\R^d$ generalize medians, and the loci of points whose depth exceeds given thresholds form equivalents of the inter-quantile regions from univariate inference. A remarkable array of applications of the depth can be found in \cite{Liu_etal1999, Zuo_Serfling2000, Liu_etal2006} and the references therein.

Suppose that all random variables are defined on a probability space $\left(\Omega, \Fsets, \PP\right)$ and denote the set of Borel probability measures on $\R^d$ by $\Prob$. The \emph{halfspace depth} of $x \in \R^d$ w.r.t. $X \sim P \in \Prob$ is the minimum probability of a halfspace that contains $x$
	\[	\HD\left(x;P\right) = \inf_{u \in \R^d} \PP\left( u\tr X \leq u\tr x \right).	\]
For $P$ uniform on a convex body $K$ (a convex compact subset of $\R^d$ with non-empty interior) the map $\HD\left(\cdot;P\right)$ closely relates to the \emph{floating bodies} of $K$, a concept used in geometry since the 19th century; for an extensive bibliography on the topic see \cite{Nagy_etal2018s}.

The (convex) floating body $K_\delta$ of a convex body $K$ with $\delta \geq 0$ is defined as the intersection of all halfspaces whose defining hyperplanes cut off a set of volume $\delta$ from $K$
	\[	K_\delta = \bigcap_{\vol{K \cap H^-} = \delta} H^+,	\]
where $H^+$ and $H^-$ are the two halfspaces with a boundary hyperplane $H \subset \R^d$ \cite{Schutt_Werner1990}. When $P \in \Prob$ is uniform on $K$ of unit volume,
	\[	K_\delta = \left\{ x \in \R^d \colon \HD\left(x;P\right) \geq \delta \right\}.	\]
For general $P\in\Prob$ and $\delta \in [0,1]$ the latter set is called the \emph{central region} of $P$ corresponding to $\delta$. In the sequel it will be denoted by $P_\delta$. 

In geometry, a particular collection of bodies somewhat dual to the floating bodies is known as the \emph{illumination bodies}. Floating bodies $K_\delta$ form subsets of $K$ and fill in $K$ from the inside as $\delta$ decreases to zero. In contrast, illumination bodies $K^\delta$ of $K$ are supersets of $K$ and approximate $K$ from the outside as $\delta \to 0$. Let $K$ be a convex body in $\R^d$ and let $\delta\geq 0$. The illumination body $K^\delta$ is the collection of all points whose volume of the convex hull with $K$ does not exceed the volume of $K$ by more than $\delta$
	\[	K^\delta = \left\{ x \in \R^d \colon \vol{\co{K \cup \left\{ x \right\}}} \leq \vol{K} + \delta \right\}.	\]

\begin{figure}[htpb]
\includegraphics[width=\twowidth\textwidth]{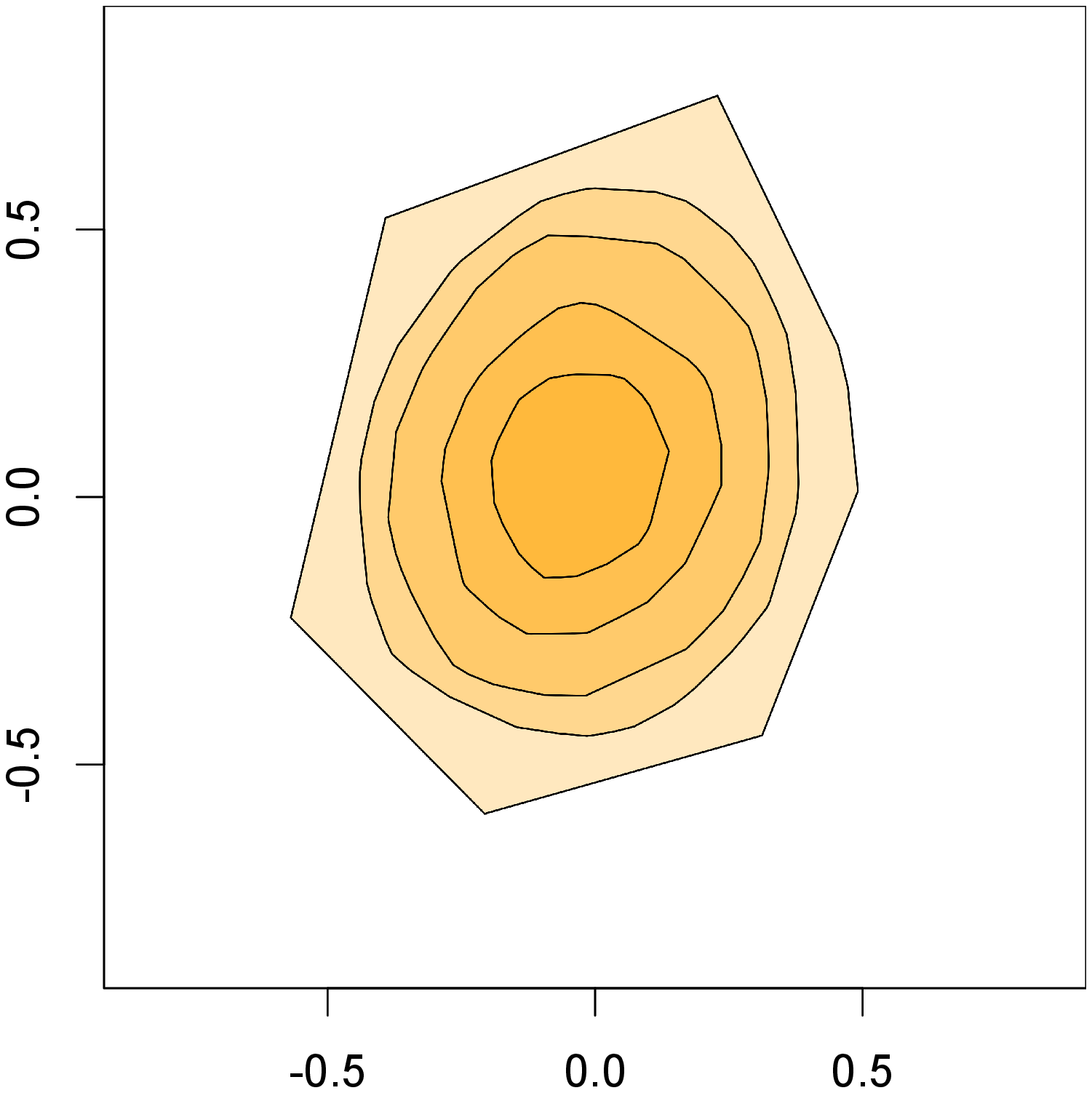} \quad \includegraphics[width=\twowidth\textwidth]{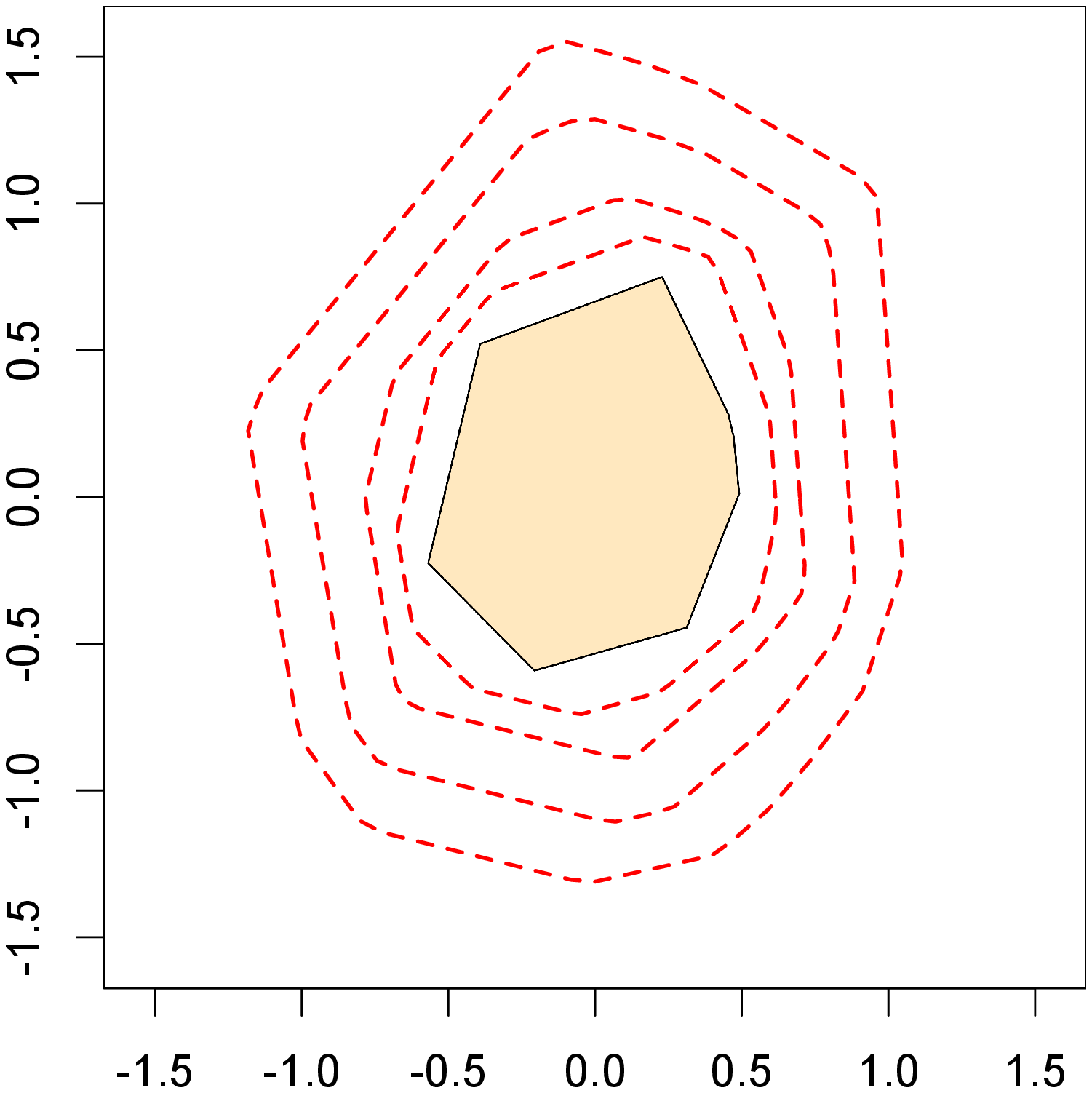}
\caption{Several floating bodies (left panel) and illumination bodies (right panel) of a polygon in $\R^2$ of unit volume with $\delta \in \left\{ 0.05, 0.1, 0.2, 0.3 \right\}$.}
\label{fig:floating and illumination}
\end{figure}

Illumination bodies were proposed by \citet{Werner1994} who found that several important properties of the floating bodies have analogues also when recast in terms of illumination. Just as the floating bodies, the illumination bodies are \begin{enumerate*}[label=(\roman*)] \item convex bodies; \item affine equivariant; \item ellipsoids if $K$ is an ellipsoid; and \item they converge to $K$ at the same rate as $K_\delta$ with $\delta$ decreasing to zero (see also the discussion in Section~\ref{section:duality} below).\end{enumerate*} Further important properties of illumination bodies can be found in \cite{Werner1996, Werner1997, Werner2006, Schutt1997, Schutt_Werner1992}. 

All these characteristics make the illumination bodies of great interest in statistics. Convexity of the upper level sets of depths is a trait that is often recognized as desirable \cite{Dyckerhoff2004, Serfling2006}. As argued by Donoho \cite{Donoho1982,Donoho_Gasko1992}, affine invariance in connection with robustness is the most valuable characteristic of the halfspace depth. For $P \in \Prob$ multivariate normal, or more generally, for $P$ elliptically symmetric with a density $f$, the depth central regions $P_\delta$ are known to be ellipsoids with the same centre and orientation as the ellipsoids given by the level sets of $f$.

It may appear that floating bodies and illumination bodies are inverse to each other, i.e. that $(K_\delta)^{\delta^\prime} = (K^{\delta^\prime})_\delta = K$ for $\delta = \delta^\prime$, or at least for $\delta^\prime$ chosen appropriately (see Figure~\ref{fig:floating and illumination}). The latter is true for $K$ an ellipsoid. But, none of those identities holds true generally, as can be seen already for $K$ a polytope. Indeed, $K_\delta$ is always strictly convex, but for $K$ a polytope, $K^\delta$ is again a polytope \cite{Schutt_Werner1990}; see also Figure~\ref{fig:inverse relations}. 

\begin{figure}[htpb]
\includegraphics[width=\twowidth\textwidth]{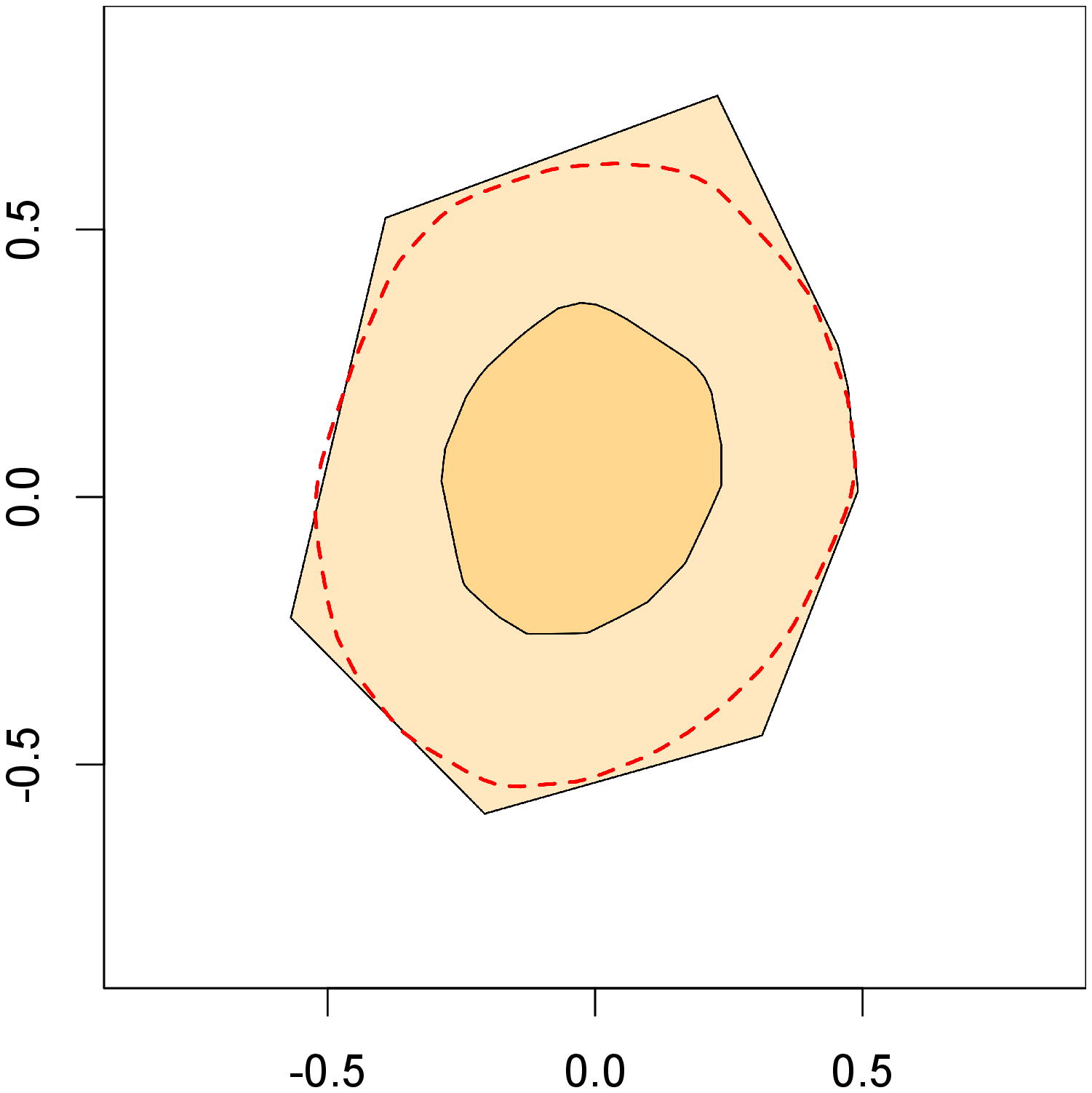} \quad \includegraphics[width=\twowidth\textwidth]{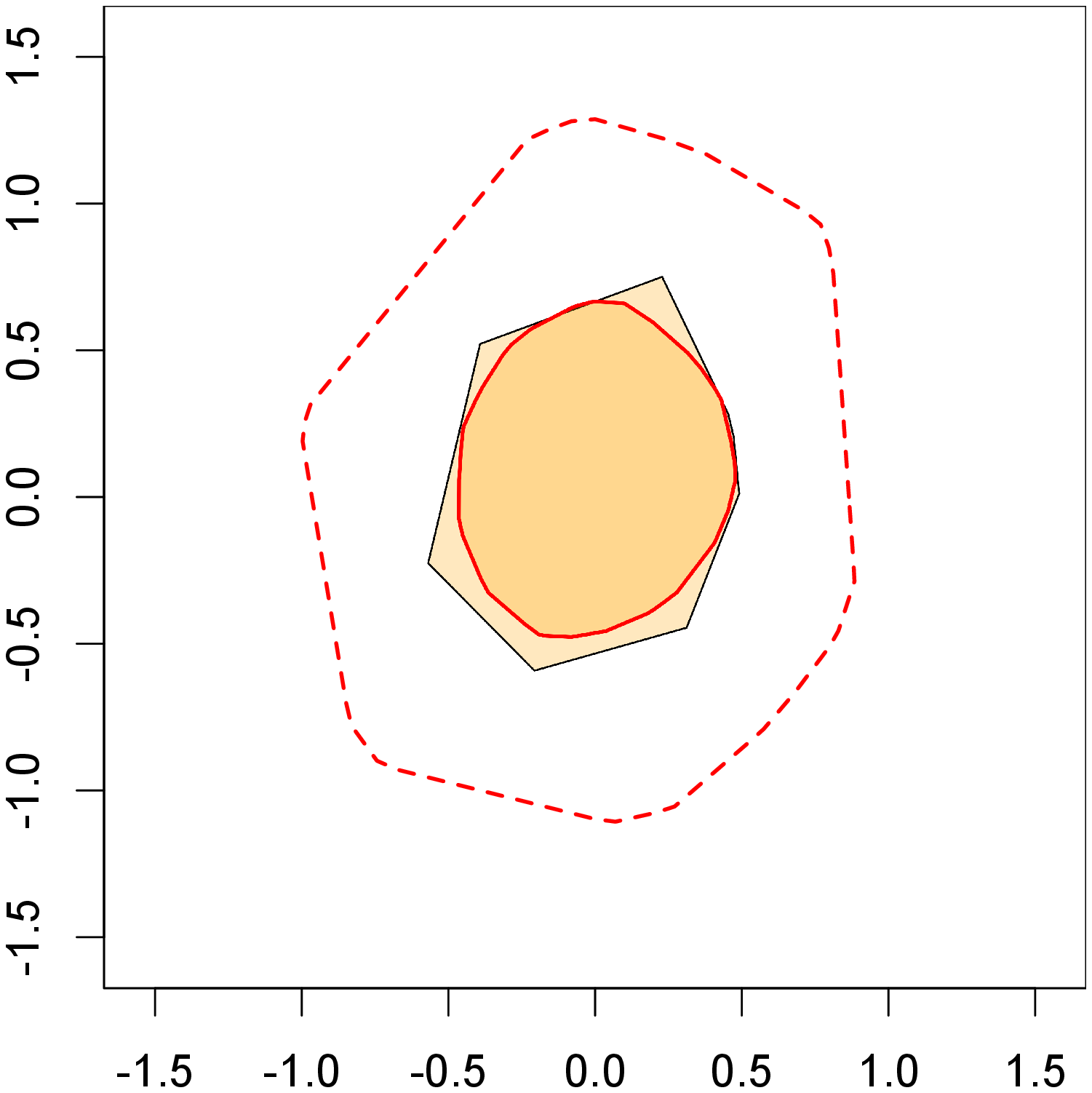}
\caption{Illumination of a floating body $(K_\delta)^{\delta^\prime}$, and a floating body of an illumination body $(K^{\delta^\prime})_\delta$ of a polygon $K$ with unit volume and $\delta = \delta^\prime = 0.2$. In general, the floating body is not a concept inverse to the illumination body. Nevertheless, there exist duality relations between the two constructions, see Section~\ref{section:duality}.}
\label{fig:inverse relations}
\end{figure} 

The open problem of finding an inverse floating body is much more involved; for an overview of some advances in this direction see \citep[Section~8]{Nagy_etal2018s}. For these reasons, it appears that at the current state of the art, illumination is the closest one can get to the inverse mapping of the floating body operator (or, by extension, to the inverse mapping of the halfspace depth).

\subsection{Illumination of ellipsoids}

The set of ellipsoids is central to the theories of illumination, floating bodies and halfspace depth. For instance, ellipsoids form an invariance class for all three transformations. In the sequel, it will thus be important to have exact expressions for the illumination bodies of ellipsoids. Define for a convex body $K$ and $x\in\R^d$ the \emph{illumination} of $x$ w.r.t. $K$ as
	\[	\Ill\left(x;K\right) = \vol{\co{K \cup \left\{ x \right\}}}.	\]
Obviously, $K^\delta = \left\{ x \in \R^d \colon \Ill\left(x;K\right) \leq \vol{K} + \delta \right\}$.

\begin{lemma}	\label{lemma:ellipsoid}
For $\mu\in\R^d$ and a symmetric, positive definite matrix $\Sigma\in\R^{d\times d}$ consider the distance $\dist\left(x,\mu\right) = \sqrt{\left(x - \mu\right)\tr \Sigma^{-1} \left(x - \mu\right)}$ and the ellipsoid given as its unit ball $\Ell = \left\{ x \in \R^d \colon \dist\left(x,\mu\right) \leq 1 \right\}$. For all $x \notin \Ell$ we have
	\[
	\begin{aligned}
	\Ill\left(x; \Ell\right) & = \vol{\Ell} \\
	& + \sqrt{\left\vert \Sigma \right\vert} \frac{\pi^{\frac{d-1}{2}}}{\Gamma\left(\frac{d+1}{2}\right)} \left( \frac{\dist\left(x,\mu\right)}{d} \left( 1 - \frac{1}{\dist\left(x,\mu\right)^2} \right)^{\frac{d+1}{2}} - \int_0^{\arccos\left(1/\dist\left(x,\mu\right)\right)} \sin^d (t) \dd t \right).%,	\\
	%\Ill^\ast\left(x;\Ell\right) & = \frac{1}{\sqrt{\pi}} \frac{\Gamma\left(\frac{d}{2}+1\right)}{\Gamma\left(\frac{d+1}{2}\right)} \left( \frac{\dist\left(x,\mu\right)}{d+1} \left( 1 - \frac{1}{\dist\left(x,\mu\right)^2} \right)^{\frac{d+1}{2}} - \int_0^{\arccos\left(1/\dist\left(x,\mu\right)\right)} \sin^d (t) \dd t \right).
	\end{aligned}
	\]
\end{lemma}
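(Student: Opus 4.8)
The plan is to use affine equivariance to reduce to the Euclidean unit ball $\B$, to describe the convex hull $\co{\B\cup\{y\}}$ explicitly as a spherical cap region glued to a straight tangent cone, and then to evaluate the two volumes. For the reduction, write $\Sigma = A^2$ for the symmetric positive definite square root $A$ and $\phi(z) = Az+\mu$. Then $\phi(\B) = \Ell$ and $\dist\left(\phi(z),\mu\right) = \lvert z\rvert$ (the Euclidean norm); since affine maps commute with $\co{\cdot}$ and scale $d$-dimensional volume by $\lvert\det A\rvert = \sqrt{\lvert\Sigma\rvert}$, we get $\Ill\left(x;\Ell\right) = \sqrt{\lvert\Sigma\rvert}\,\Ill\left(y;\B\right)$ with $y = A^{-1}(x-\mu)$, $\lvert y\rvert = \dist\left(x,\mu\right) =: r$, and $x\notin\Ell\iff r>1$; also $\vol{\Ell} = \sqrt{\lvert\Sigma\rvert}\,\vol{\B}$. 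By rotational symmetry $\Ill\left(y;\B\right)$ depends only on $r$, so I may take $y = (r,0,\dots,0)$ and write $z = (z_1,z')$ with $z'\in\R^{d-1}$.

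The key step is the explicit description of $\co{\B\cup\{y\}}$. A supporting halfspace $\{z\colon p\tr z \le 1\}$ of $\B$ at $p\in\partial\B$ contains $y$ precisely when $p\tr y = r\,p_1 \le 1$, i.e.\ $p_1\le 1/r$; equality singles out the tangent hyperplanes, whose contact points form the sphere $\{z\colon z_1 = 1/r,\ \lvert z'\rvert = \rho\}$ with $\rho = \sqrt{1-r^{-2}}$. I would then show
	\[
	\co{\B\cup\{y\}} \;=\; \left(\B\cap\{z_1\le 1/r\}\right)\,\cup\,T,
	\]
where $T$ is the solid right cone with apex $y$ over the disk $D = \{z\colon z_1 = 1/r,\ \lvert z'\rvert\le\rho\}$, which is exactly the slice $\B\cap\{z_1 = 1/r\}$. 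The inclusion ``$\supseteq$'' is immediate since $D\subseteq\B$ and $y\in\co{\B\cup\{y\}}$. For ``$\subseteq$'': the supporting halfspaces of $\B$ at points $p$ with $p_1\le 1/r$ also contain $y$, hence contain $\co{\B\cup\{y\}}$, which forces every point of the hull with $z_1\le 1/r$ to lie in $\B$; and the tangent halfspaces, intersected with $\{z_1\ge 1/r\}$, cut out exactly $T$. The two pieces overlap only in $\{z_1 = 1/r\}$, so their $d$-volumes add.

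It remains to integrate. The cone $T$ has a $(d-1)$-dimensional base of volume $\rho^{d-1}\,\pi^{(d-1)/2}/\Gamma\!\left(\tfrac{d+1}{2}\right)$ and height $r - r^{-1}$, whence $\vol{T} = \tfrac1d\,\rho^{d-1}(r-r^{-1})\,\pi^{(d-1)/2}/\Gamma\!\left(\tfrac{d+1}{2}\right) = \tfrac{r}{d}\left(1-r^{-2}\right)^{(d+1)/2}\pi^{(d-1)/2}/\Gamma\!\left(\tfrac{d+1}{2}\right)$, using $\rho^{d-1}(r-r^{-1}) = r(1-r^{-2})^{(d+1)/2}$. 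Slicing by the first coordinate gives $\vol{\B\cap\{z_1\ge 1/r\}} = \tfrac{\pi^{(d-1)/2}}{\Gamma((d+1)/2)}\int_{1/r}^1(1-t^2)^{(d-1)/2}\dd t$, and the substitution $t = \cos s$ converts the integral into $\int_0^{\arccos(1/r)}\sin^d(s)\dd s$. Then $\Ill\left(y;\B\right) = \vol{\B} - \vol{\B\cap\{z_1\ge 1/r\}} + \vol{T}$; multiplying by $\sqrt{\lvert\Sigma\rvert}$ and putting $r = \dist\left(x,\mu\right)$ yields the claim. The only genuinely delicate point is the cap-plus-cone description of $\co{\B\cup\{y\}}$ and its justification via supporting hyperplanes; everything else is bookkeeping and a standard substitution, and the degenerate case $d=1$ (where $T$ is a segment and $\Gamma(1) = 1$) is covered by the same computation.
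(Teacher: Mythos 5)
Your proof is correct and follows essentially the same route as the paper's: reduce to the unit ball by affine equivariance (the paper cites Werner's Proposition~2; you spell out $\phi(z)=Az+\mu$ directly), identify $\co{\B\cup\{y\}}\setminus\B$ as the tangent cone of height $r-1/r$ over the disk at $z_1=1/r$ with the spherical cap of height $1-1/r$ removed, and add up the three volumes. Your extra detail — the supporting-hyperplane justification of the cap-plus-cone decomposition and the explicit slicing integral with the $t=\cos s$ substitution for the cap — is material the paper leaves implicit, but the decomposition, the two pieces, and the resulting formula are the same.
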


The proof of Lemma~\ref{lemma:ellipsoid} can be found in Appendix~\ref{appendix:proofs} along with Lemma~\ref{lemma:g properties} that states some important properties of the function $g_d$ given by
	\begin{equation*}	%\label{g relation}
	g_d\left( \dist(x,\mu) \right) = \frac{\Ill\left(x; \Ell\right)}{\vol{\Ell}}.	
	\end{equation*}
The function $g_d$ is continuously differentiable and its derivative takes a rather simple form
	\[	g_d^\prime(t) = \frac{\Gamma\left(\frac{d}{2}+1\right)}{\sqrt{\pi}\,\Gamma\left(\frac{d+1}{2}\right)} \frac{1}{d} \left(1 - \frac{1}{t^2}\right)^{(d-1)/2} \quad\mbox{for }t\in(1,\infty).	\]
According to Lemma~\ref{lemma:g properties}, there exists an inverse function $g_d^{-1} \colon [1,\infty) \to [1,\infty)$ to $g_d$, and we can write
	\begin{equation}	\label{MD and illumination}	
	\dist(x,\mu) = g_d^{-1} \left( \frac{\Ill\left(x;\Ell\right)}{\vol{\Ell}} \right).	
	\end{equation}
This result will be of great importance in the subsequent analysis. 

\subsection{Duality considerations}	\label{section:duality}

%Let us conclude this motivating section by two remarks on the duality of illumination and floating bodies. 

%\begin{enumerate}[label=(R\arabic*), ref=(R\arabic*)]
%\item 
The main impetus for considering floating/illumination bodies in geometry comes from the calculus of variations where many functionals over subsets of $\R^d$ are minimized by ellipsoids. Such statements are conveniently quantified using the \emph{affine surface area} $\asa{K}$ of a convex body $K$. For $K$ with a sufficiently smooth boundary $\partial K$, $\asa{K}$ is given as a certain integral over $\partial K$ \cite[Section~10.5]{Schneider2014}. Interestingly, it can be written also as the limit
	\begin{equation}	\label{affine surface area}
	\asa{K} = c_d \lim_{\delta \to 0} \frac{\vol{K} - \vol{K_\delta}}{\delta^{2/(d+1)}}	
	\end{equation}
for $c_d > 0$ a known constant. Thus, floating bodies can be used to extend the definition of the affine surface area to arbitrary convex bodies \cite{Schutt_Werner1990}, see also \cite[Section~5.3]{Nagy_etal2018s}. In connection with the motivation from the calculus of variations, by the important affine isoperimetric inequality \citep[Section~10.5]{Schneider2014} ellipsoids are the only maximizers of the affine surface area among all convex bodies with fixed volume.

As shown in \cite{Werner1994}, a definition equivalent to \eqref{affine surface area} can be stated also in terms of the illumination bodies
	\begin{equation*}	%\label{floating and illumination formula}
	\asa{K} = b_d \lim_{\delta \to 0} \frac{\vol{K^\delta} - \vol{K}}{\delta^{2/(d+1)}}
	\end{equation*}
for $\asa{K}$ from \eqref{affine surface area}, $b_d>0$ a known constant, and $K$ a convex body. Thus, floating bodies and illumination bodies approach $K$ at the same rate, and in this respect they act dually to each other.

Another duality aspect of floating and illumination bodies was recently studied in \cite{Mordhorst_Werner2017, Mordhorst_Werner2017b}. There it was shown that, under appropriate conditions, the polar of a floating body of $K$ is, in a proper distance, close to an illumination body of the polar of $K$. Therefore, even though the exact correspondence between $\left(K_\delta\right)^\delta$ and $K$ does not hold true, solid evidence from convex geometry suggests that illumination is a concept that is naturally complementary to the floating body (and the halfspace depth). This pairing will be used throughout this paper to define robust, affine invariant extensions of the halfspace depth $\HD$.
%\end{enumerate}

%
% illumination depth
%

\section{Illumination depth}	\label{section:illumination depth}

Denote the depth of the halfspace median of $P\in\Prob$ by $\Pi(P) = \sup_{x \in\R^d} \HD\left(x;P\right)$. By definition, $\Pi(P) \geq 1/2$ if and only if $P$ is halfspace symmetric \cite{Zuo_Serfling2000b}. Thus, $\Pi(\cdot)$ may be considered a measure of symmetry of probability distributions. For distributions with a density $1/2 \geq \Pi(P) \geq (d+1)^{-1}$ \citep[Section~4]{Nagy_etal2018s}. 

Let $\left\{X_n\right\}_{n=1}^\infty$ be a sequence of independent random variables with distribution $P\in\Prob$. Denote by $P_n \in \Prob$ the empirical measure of the first $n$ variables. We propose to amend the sample halfspace depth $\HD\left(\cdot;P_n\right)$ by considering not only the collection of all depth central regions 
	\[	P_{n,\delta} = \left\{ x \in \R^d \colon \HD\left(x;P_n\right) \geq \delta \right\},	\]
but also the illuminations on these. Let $\left\{ \alpha_n \right\}_{n=1}^\infty \subset [0,\Pi(P))$ be a non-increasing sequence of constants. For a random sample of size $n$ the \emph{halfspace-illumination depth} (or simply the \emph{illumination depth}) of $x \in \R^d$ w.r.t. the empirical measure $P_n$ is 
	\begin{equation}	\label{illumination depth}
		  \ID\left(x;P_n\right) = \begin{pmatrix} \HD\left(x;P_n\right) \\ \Ill\left(x;P_{n, \alpha_n}\right)/\vol{P_{n,\alpha_n}} \end{pmatrix}.	
	 \end{equation}
In Figure~\ref{fig:illumination depth} some level sets of this depth are displayed. Several remarks are in order.
	\begin{enumerate}[label=(R\arabic*), ref=(R\arabic*)]
	\item The depth $\ID$ has two components. The usual halfspace depth is a reliable indicator of centrality if $x$ lies inside the region $P_{n,\alpha_n}$ where there are enough observations to assess its degree of centrality. For such $x$, $\ID(x;P_n) = (\HD(x;P_n),1)\tr$, and the illumination does not affect depth-rankings. In contrast, the illumination evaluates the position of $x$ against the group of central points $P_{n,\alpha_n}$ that represent the main mass of $P_n$. Illumination thus plays a role in the ranking of extremal points whose depth is small, or zero. In the terminology of \cite{Zuo_Serfling2000}, illumination is an outlyingness function.
	\item The illumination is undefined if $\Pi(P_n) < \alpha_n$, or if $\vol{P_{n,\alpha_n}} = 0$. The first situation cannot occur for $n$ large enough since $\Pi(P_n) \xrightarrow[n\to\infty]{\as} \Pi(P)$ \citep[formula~(6.7)]{Donoho_Gasko1992}. Suppose then that $n$ is big enough for $P_{n,\alpha_n}$ to be non-empty. Because $P_{n,\alpha_n}$ is convex, its volume is zero only if that set is contained in a hyperplane. For random samples from continuous distributions, that happens with probability zero.
	\item \label{maximal depth} The maximum depth $\Pi(P)$ is typically unknown. In practice it can be replaced by $\Pi(P_n)$ or, if $P$ is regular enough, by a universal lower bound on $\Pi(P)$ (i.e. $1/2$ for halfspace (or elliptically) symmetric distributions, $\exp(-1)$ for log-concave distributions, or $(s+1)^{-1/s}$ for $s$-concave measures with $-1<s<0$ \citep[Theorem~3]{Nagy_etal2018s}). In any case, $\alpha_n$ should be bounded away from $\Pi(P)$ for the sets $P_{n,\alpha_n}$ to be sufficiently large.
	\item The illumination depth may be parametrized also in terms of probability --- $\alpha_n$ may be chosen as the maximum $\delta \in (0,1/2)$ with the property that $P_{n,\delta}$ contains at least $\left\lceil p_n\, n \right\rceil$ sample points for $\left\{ p_n \right\}_{n=1}^\infty \in (0,1)$ given. In the theoretical treatment of the depth we consider the simpler parametrization by $\alpha$.
	\end{enumerate}
	
% The illumination depth \eqref{illumination depth} combines halfspace depth and illumination in a natural manner --- for $x$ deep enough inside $P_n$ the halfspace depth is a relevant indicator of the centrality of $x$. If $x$ is extremal w.r.t. $P_n$, we favour the illumination of $x$ onto the last ``central" region $P_{n,\alpha_n}$, possibly in combination with some information from the depth $\HD\left(x;P_n\right)$. 

\begin{figure}[htpb]
\includegraphics[width=\onewidth\textwidth]{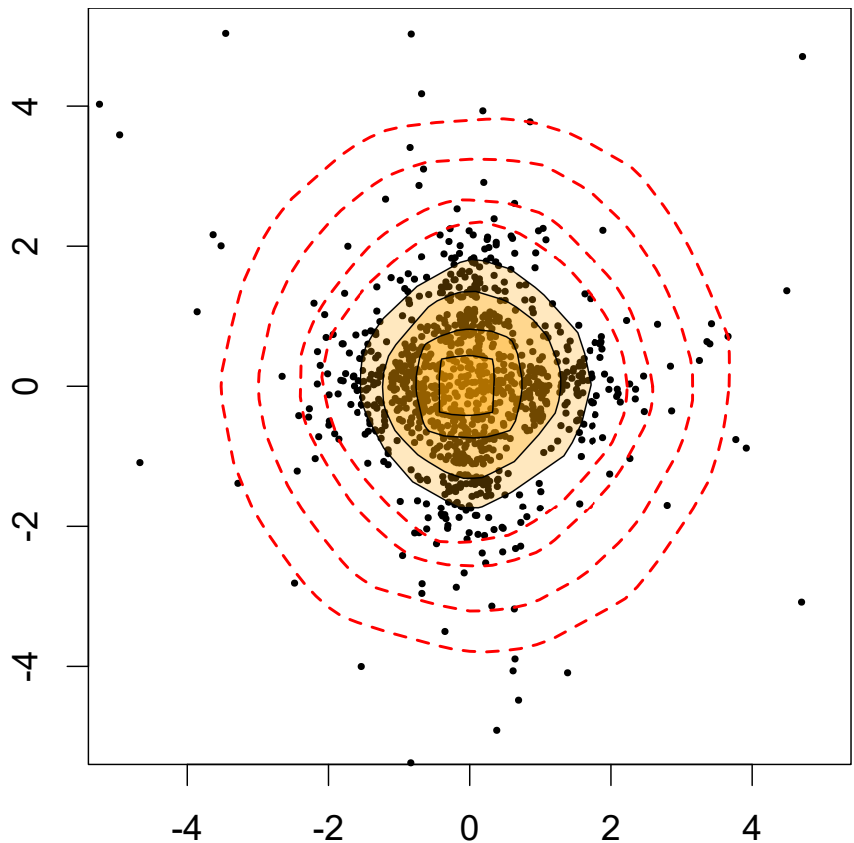}% \quad \includegraphics[width=\twowidth\textwidth]{g_Derivatives.eps}
\caption{Several level sets of the halfspace depth (orange regions) and the illumination with $\alpha = 1/20$ (regions with dashed red boundaries) for a bivariate random sample of size $1000$.}
\label{fig:illumination depth}
\end{figure}

The practical choice of the cut-off levels $\alpha_n$ determines the properties of the depth and should be selected with an application in mind. For estimation of extreme depth quantile regions $P_\delta$ with $\delta$ extremely small, it will be advantageous to take $\alpha_n$ to converge to $0$ slowly enough. That way, one obtains estimators comparable to the approaches taken in \cite{Einmahl_etal2015, He_Einmahl2017}. The disadvantage of this choice of thresholding is its lack of robustness. For a procedure with good robustness properties, a sequence of cut-offs bounded from below is more appropriate. Here we strive for robustness in conjunction with affine invariance. Therefore, we focus mainly on the latter situation; one example of the former scenario will be given in Section~\ref{section:quantile estimation}. For tie-breaking purposes, for a particular point $x$ the cut-off $\alpha_n$ may even be taken to depend on $x$. Then, if the halfspace depths of $x$ any $y$ coincide, illumination on some $P_{n,\alpha_n}$ with $\alpha_n > \HD\left(x;P_n\right)$ may help to decide which of the two points is deeper inside the mass of $P_n$, see Section~\ref{section:tie-breaking}. Without any substantial loss of generality\footnote{All results presented here could be extended in a straightforward way, at the cost of further technicalities.}, in our theoretical treatment we focus on the situation of a constant cut-off sequence $\alpha_n = \alpha < \Pi(P)$ for all $n$. As we will see in Section~\ref{section:robustness}, for $\alpha = \Pi(P)/(1 + \Pi(P))$ we obtain a procedure with excellent robustness properties. % For the base case of halfspace symmetric distributions this accounts to $\alpha = 1/3$. 

The mode of the cut-offs $\alpha_n$ affects the population version of $\ID$. If $\alpha_n \to 0$, only $\HD$ in \eqref{illumination depth} is relevant as $n\to\infty$. In this situation the appropriate population version of $\ID$ is the usual halfspace depth $\HD\left(\cdot;P\right)$ and by the standard consistency result for $\HD$ for any $P\in\Prob$ \cite{Donoho_Gasko1992}
	\[	\lim_{n\to\infty} \left\Vert \ID\left(x;P_n\right) - \left( \HD\left(x;P\right), 1 \right)\tr \right\Vert = 0 \quad\mbox{almost surely}.	\]
The situation is different for a constant $\alpha_n = \alpha \in (0,\Pi(P))$. Then we define the population version of $\IDa$ as 
	\begin{equation}	\label{illumination depth population}
	\IDa\left(x;P\right) = \begin{pmatrix} \IDa^{1}\left(x;P\right) \\ \IDa^{2}\left(x;P\right) \end{pmatrix} = \begin{pmatrix} \HD\left(x;P\right) \\ \Ill\left(x;P_{\alpha}\right)/\vol{P_{\alpha}} \end{pmatrix},	
	\end{equation}
which is well defined as soon as $\vol{P_{\alpha}} > 0$. It is immediate that \eqref{illumination depth population} reduces to \eqref{illumination depth} for $P = P_n$ and $\alpha = \alpha_n$. %Thus, it is enough to explore the theoretical properties of the general illumination depth \eqref{illumination depth population}, without the additional assumption of continuity of $P$.

The illumination depth $\IDa$ satisfies the desirable properties of a statistical depth suggested in \cite{Zuo_Serfling2000, Serfling2006}.
	
\begin{theorem} \label{theorem:affine invariance}
Let $x \in \R^d$, $X \sim P_X = P\in\Prob$, and $\alpha \in (0,\Pi(P))$ be such that $\vol{P_\alpha}>0$. 
	\begin{enumerate}[label=(\roman*)]
	\item For any $A \in \R^{d\times d}$ non-singular and any $b \in \R^d$ we have $\IDa\left(A x + b;P_{A X + b}\right) = \IDa\left(x;P_{X}\right)$, where $P_{A X + b}$ is the distribution of the random vector $A X + b$.
	\item $P$ is halfspace symmetric around $x \in \R^d$ if and only if $\IDa\left(x;P\right) = \left(c, 1\right)\tr$ for $c \geq 1/2$.
	\item For $x$ that satisfies $\HD\left(x;P\right) = \Pi(P)$ and any $u \in \R^d$, the first and the second component of $\IDa\left(x + u \, t; P\right)$ are a non-increasing and a non-decreasing function of $t \geq 0$, respectively.
	\item All the level sets of the form $\left\{ x \in \R^d \colon \IDa^1\left(x;P\right) \geq c_1 \mbox{ and }\IDa^2\left(x;P\right) \leq c_2 \right\}$ are convex for all $c_1, c_2 \in \R$, and compact for all $c_1 > 0$, $c_2 \in \R$. 
	\item The first element of $\IDa\left(\cdot;P\right)$ is a function that is upper semi-continuous. Its second element is a function that is continuous on $\R^d$. If $P$ has a density, $\IDa\left(\cdot;P\right)$ is continuous on $\R^d$.
	\item As $\left\Vert x \right\Vert \to \infty$, the first element of $\IDa\left(x;P\right)$ converges uniformly to zero. Its second element increases uniformly to $\infty$.
	\end{enumerate}
\end{theorem}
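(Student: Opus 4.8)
Here is a plan for proving Theorem~\ref{theorem:affine invariance}.

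The plan is to verify the six items coordinatewise. Every statement about the first coordinate $\IDa^1(\cdot;P)=\HD(\cdot;P)$ I would obtain from the standard structural properties of the halfspace depth; every statement about the second coordinate $\IDa^2(x;P)=\Ill(x;P_\alpha)/\vol{P_\alpha}$ I would reduce to elementary geometric properties of the map $y\mapsto\Ill(y;K)=\vol{\co{K\cup\{y\}}}$ for the \emph{fixed} convex body $K=P_\alpha$ — which is indeed a convex body, being convex, compact and, by hypothesis, of positive volume — together with the properties of illumination bodies recalled in Section~\ref{section:illumination bodies}. Assembling the two coordinates then gives each claim.

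For (i) I would use affine invariance of $\HD$, which yields affine equivariance of the central region, $(P_{AX+b})_\alpha=A\,P_\alpha+b$; since $\co{\cdot}$ commutes with affine maps and $\vol{AS+b}=\lvert\det A\rvert\,\vol{S}$, the ratio defining $\IDa^2$ is affine invariant. For (iv): $\{x:\IDa^1(x;P)\ge c_1\}=P_{c_1}$ is convex as an intersection of halfspaces and, for $c_1>0$, compact (closed by upper semicontinuity of $\HD$, bounded by a standard property of the halfspace depth), while $\{x:\IDa^2(x;P)\le c_2\}$ is empty when $c_2<1$ and equals the illumination body $(P_\alpha)^{\delta}$ with $\delta=(c_2-1)\vol{P_\alpha}\ge0$ — hence a convex body — when $c_2\ge1$; intersecting gives convexity for all $c_1,c_2$ and compactness for $c_1>0$. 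For (v) I would cite upper semicontinuity of $\HD(\cdot;P)$ and its continuity when $P$ has a density, and prove continuity of $y\mapsto\Ill(y;K)$ from the facts that $y_n\to y$ forces $\co{K\cup\{y_n\}}\to\co{K\cup\{y\}}$ in the Hausdorff metric (the convex hull being non-expansive there) and that $\vol{\cdot}$ is Hausdorff-continuous on convex bodies. For the first half of (vi), given $\varepsilon>0$ choose $M$ with $P$-mass of the closed ball $B$ of radius $M$ about the origin exceeding $1-\varepsilon$; then for $\lVert x\rVert>M$ the closed halfspace $\{z:x\tr z\ge\lVert x\rVert^2\}$ contains $x$ and is disjoint from $B$, so $\HD(x;P)<\varepsilon$, uniformly. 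For the second half, $P_\alpha$ contains a ball of radius $\rho>0$, so $\co{P_\alpha\cup\{x\}}$ contains a cone with apex $x$ over a $(d-1)$-dimensional disk of radius of order $\rho$, whose volume grows at least linearly in the Euclidean distance from $x$ to $P_\alpha$; that distance diverges with $\lVert x\rVert$, so $\IDa^2(x;P)\to\infty$ uniformly.

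The two items carrying a little content are (ii) and (iii). For (iii) I would use that $\HD(\cdot;P)$ is quasiconcave with its maximal value attained at $x$: for $0\le t_1\le t_2$, writing $x+u t_1$ as a convex combination of $x$ and $x+u t_2$ and using maximality at $x$ gives $\HD(x+u t_1;P)\ge\HD(x+u t_2;P)$; and, since $\HD(x;P)=\Pi(P)>\alpha$ forces $x\in P_\alpha$, the same convex-combination identity shows $x+u t_1\in\co{P_\alpha\cup\{x+u t_2\}}$, hence $\co{P_\alpha\cup\{x+u t_1\}}\subseteq\co{P_\alpha\cup\{x+u t_2\}}$, so $\IDa^2$ is non-decreasing along the ray. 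For (ii), one direction is just the definition: $\IDa^1(x;P)=c\ge1/2$ says every closed halfspace with $x$ on its boundary has $P$-mass at least $1/2$, i.e. $P$ is halfspace symmetric about $x$. For the converse I would first establish the small lemma that a centre of halfspace symmetry maximizes $\HD(\cdot;P)$: if $\HD(y;P)>\HD(x;P)\ge1/2$ for some $y$, then the closed halfspaces $\{z:(y-x)\tr z\le(y-x)\tr x\}$ and $\{z:(y-x)\tr z\ge(y-x)\tr y\}$ are disjoint (as $(y-x)\tr(y-x)>0$) yet carry $P$-mass at least $1/2$ and more than $1/2$, contradicting $P(\R^d)=1$. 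Hence $\HD(x;P)=\Pi(P)>\alpha$, so $x\in P_\alpha$, $\co{P_\alpha\cup\{x\}}=P_\alpha$, and $\IDa^2(x;P)=1$, giving $\IDa(x;P)=(c,1)\tr$ with $c\ge1/2$.

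I do not expect a single serious obstacle; most of the effort is in quoting the right halfspace-depth facts (upper semicontinuity, continuity under a density, boundedness of positive-depth regions, vanishing at infinity) and the illumination-body facts from Section~\ref{section:illumination bodies}. The only genuinely non-bookkeeping steps are the Hausdorff-continuity argument for $\Ill(\cdot;P_\alpha)$ in (v), the convex-hull monotonicity observation used in (iii) and in the second half of (vi), and the short halfspace-symmetry lemma in (ii); the detailed arguments belong in Appendix~\ref{appendix:proofs}.
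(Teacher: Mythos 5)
Your proposal is correct and, for the only part the paper actually proves in detail — part (i) — it coincides with the paper's argument: affine invariance of $\HD$ gives $(P_{AX+b})_\alpha = A(P_X)_\alpha + b$, and the ratio $\Ill(\cdot;P_\alpha)/\vol{P_\alpha}$ is invariant because convex hulls commute with affine maps while the determinant factor cancels. For (ii)--(vi) the paper declares the claims ``straightforward'' from standard properties of the halfspace depth and of illumination bodies, and the verifications you sketch (the halfspace-symmetry lemma for (ii), the quasiconcavity and convex-hull monotonicity for (iii), the identification of lower level sets of $\IDa^2$ with illumination bodies $(P_\alpha)^\delta$ for (iv), Hausdorff-continuity of volume for (v), and the tangent-halfspace and cone-volume estimates for (vi)) are exactly the intended routine arguments.
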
 

\subsection{Uniform consistency}

Recall that $P$ is said to have contiguous support if the support of $P$ cannot be separated by a slab between two parallel hyperplanes. Connected support is contiguous. %Under the mild conditions of absolute continuity and contiguous support of $P$, the sample version of the depth $\IDa$ from \eqref{illumination depth} is a uniformly consistent estimator of its population version \eqref{illumination depth population}.

\begin{theorem}	\label{theorem:consistency}
Let $P \in \Prob$ be absolutely continuous with contiguous support, and let $\alpha \in (0,\Pi(P))$. Then the illumination is locally uniformly consistent for $P$, that is for any $K \subset \R^d$ compact
	\[	\sup_{x \in K} \left\vert \Ill\left(x;P_{n,\alpha}\right) - \Ill\left(x;P_\alpha\right) \right\vert \xrightarrow[n\to\infty]{\as}0.	\]
If, furthermore, $P$ satisfies Assumptions~1 and~2 from \cite{Brunel2019}, and $K_n$ is a sequence of sets such that $K_n \cup P_\alpha$ is contained in a ball of radius $R_n$, then 
	\[	
	\begin{aligned}
	\sup_{x \in K_n} \left\vert \Ill\left(x;P_{n,\alpha}\right) - \Ill\left(x;P_\alpha\right) \right\vert = \mathcal O_{\PP}\left( \frac{\max\left\{1,R_n^{d-1}\right\}}{\sqrt{n}} \right), \\
	\sup_{x \in K_n} \left\vert \frac{\Ill\left(x;P_{n,\alpha}\right)}{\vol{P_{n,\alpha}}} - \frac{\Ill\left(x;P_\alpha\right)}{\vol{P_\alpha}} \right\vert = \mathcal O_{\PP}\left( \frac{\max\left\{1,R_n^{d-1}\right\}}{\sqrt{n}} \right), \\
	\sup_{x \in K_n} \left\Vert \IDa(x;P_n) - \IDa\left(x;P\right) \right\Vert = \mathcal O_{\PP}\left( \frac{\max\left\{1,R_n^{d-1}\right\}}{\sqrt{n}} \right). \\	
	\end{aligned}
	\]
In particular, if $d=1$ and $R_n>0$, or $d>1$ and $R_n = o\left( n^{1/(2 (d - 1))} \right)$, 
	\[
	\begin{aligned}
	\sup_{x \in K_n} \left\vert \Ill\left(x;P_{n,\alpha}\right) - \Ill\left(x;P_\alpha\right) \right\vert & \xrightarrow[n\to\infty]{\PP}0, \\
	\sup_{x \in K_n} \left\vert \frac{\Ill\left(x;P_{n,\alpha}\right)}{\vol{P_{n,\alpha}}} - \frac{\Ill\left(x;P_\alpha\right)}{\vol{P_\alpha}} \right\vert & \xrightarrow[n\to\infty]{\PP}0, \\
	\sup_{x \in K_n} \left\Vert \IDa\left(x;P_n\right) - \IDa\left(x;P\right) \right\Vert & \xrightarrow[n\to\infty]{\PP}0.
	\end{aligned}
	\]
\end{theorem}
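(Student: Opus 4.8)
The plan is to separate the two sources of error: the random fluctuation of the central region $P_{n,\alpha}$ around $P_\alpha$, and the deterministic sensitivity of the illumination functional $x\mapsto\Ill(x;\cdot)$ to perturbations of its reference body. Writing $\Haus$ for the Hausdorff distance on convex bodies, the proof then reduces to (a) a quantitative continuity estimate for $\Ill(x;\cdot)$ in $\Haus$, uniform over $x$ in a ball, and (b) the Hausdorff consistency of $P_{n,\alpha}$, for which the relevant rates are already available.

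\emph{Step 1: a geometric stability estimate.} I would first show that for convex bodies $K,L$ and a point $x$, all contained in the ball $R\B$, and with $\Haus(K,L)\le 1$,
\[ \left\vert \Ill(x;K) - \Ill(x;L) \right\vert \le c_d \max\left\{1,R^{d-1}\right\} \Haus(K,L) \]
for a dimensional constant $c_d$. This rests on three elementary facts: the maps $M\mapsto M\cup\{x\}$ and $M\mapsto\co M$ are $1$-Lipschitz for $\Haus$, so $\Haus\bigl(\co{K\cup\{x\}},\co{L\cup\{x\}}\bigr)\le\Haus(K,L)$; the hull $\co{L\cup\{x\}}$ is again contained in $R\B$; and the volume functional is Lipschitz on convex bodies inside a fixed ball with constant of order $\max\{1,R^{d-1}\}$, which follows from the Steiner formula and the monotonicity of the quermassintegrals, since $\vol{M+\varepsilon\B}-\vol M\le\vol{R\B+\varepsilon\B}-\vol{R\B}$ whenever $M\subseteq R\B$.

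\emph{Step 2: Hausdorff consistency of the central regions.} For the a.s.\ statement, absolute continuity makes $\HD(\cdot;P)$ continuous, contiguity of the support guarantees that $P_\alpha$ is a convex body with no ``plateau'' at level $\alpha$, i.e.\ $P_\alpha=\overline{\{x\colon\HD(x;P)>\alpha\}}$; together with the a.s.\ uniform consistency $\sup_{x\in\R^d}\lvert\HD(x;P_n)-\HD(x;P)\rvert\xrightarrow[n\to\infty]{\as}0$ \cite{Donoho_Gasko1992} this yields $\Haus(P_{n,\alpha},P_\alpha)\xrightarrow[n\to\infty]{\as}0$ by a standard inner/outer sandwiching argument (so in particular, eventually, $P_{n,\alpha}$ is a genuine convex body contained in the fixed compact set $P_{\alpha/2}$). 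For the rates, Assumptions~1 and~2 of \cite{Brunel2019} are precisely the conditions under which that reference yields $\Haus(P_{n,\alpha},P_\alpha)=\mathcal O_\PP(n^{-1/2})$, while $\sup_{x\in\R^d}\lvert\HD(x;P_n)-\HD(x;P)\rvert\le\sup_H\lvert P_n(H)-P(H)\rvert=\mathcal O_\PP(n^{-1/2})$, the supremum running over halfspaces $H$, by the Vapnik--Chervonenkis inequality.

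\emph{Step 3: conclusion.} For $K$ compact and $n$ large enough that $K\cup P_\alpha\cup P_{n,\alpha}$ lies in a fixed ball, Step~1 gives $\sup_{x\in K}\lvert\Ill(x;P_{n,\alpha})-\Ill(x;P_\alpha)\rvert\le c_d\max\{1,R^{d-1}\}\Haus(P_{n,\alpha},P_\alpha)$, and the right-hand side does not depend on $x$; combined with Step~2 this is the local uniform consistency. The same computation with $R$ of order $R_n$ and the rate from \cite{Brunel2019} gives the stated $\mathcal O_\PP$-bound for $\Ill(x;P_{n,\alpha})$. Applying Step~1 at a point $x$ in the (eventually non-empty) intersection $P_\alpha\cap P_{n,\alpha}$, where $\Ill$ reduces to the volume, gives $\lvert\vol{P_{n,\alpha}}-\vol{P_\alpha}\rvert=\mathcal O_\PP(n^{-1/2})$, so $\vol{P_{n,\alpha}}$ is eventually bounded away from $0$; expanding the difference of ratios and inserting the two previous estimates (using that $\Ill(x;P_\alpha)-\vol{P_\alpha}$ is of order at most $R_n$ uniformly over $x\in K_n$, since the hull of a body of bounded diameter with a point at distance $R_n$ adds volume of order at most $R_n$) yields the bound for the normalized illumination, and combining with the rate for $\HD$ yields the bound for $\IDa$. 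The ``in particular'' conclusions then follow at once, since $\max\{1,R_n^{d-1}\}/\sqrt n\to0$ under the stated growth conditions. I expect the only genuinely delicate point to be the input of Step~2: in the a.s.\ case, verifying the no-plateau identity $P_\alpha=\overline{\{\HD(\cdot;P)>\alpha\}}$ that makes the sandwiching work (this is where absolute continuity and contiguity of the support are truly used), and in the quantitative case, checking that Assumptions~1--2 of \cite{Brunel2019} are compatible with the standing hypotheses and that the Hausdorff rate there is of order $n^{-1/2}$. Everything else is bookkeeping with dimensional constants and powers of $R_n$.
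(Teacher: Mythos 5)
Your proposal is correct and follows essentially the same route as the paper's proof: a Lipschitz-type stability estimate for volumes of convex bodies inside a ball of radius $R$ (your Step~1 is exactly the content of the paper's Lemma~\ref{lemma:Lipschitz}, obtained via the Steiner formula and monotonicity of intrinsic volumes, combined with the $1$-Lipschitz properties of $M\mapsto M\cup\{x\}$ and $M\mapsto\co M$ for $\Haus$), fed by the Hausdorff consistency of the central regions (a.s.\ from Dyckerhoff and $\mathcal O_\PP(n^{-1/2})$ from Brunel), then the decomposition of the normalized-illumination difference using $\vol{P_{n,\alpha}}\geq\vol{P_\alpha}/2$ eventually and the $\mathcal O(R_n)$ bound on $\sup_{x\in K_n}\Ill(x;P_\alpha)$. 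The one genuine, though minor, point of divergence is Step~2: the paper simply invokes Theorem~4.2 of Dyckerhoff (2018) for $\Haus(P_{n,\alpha},P_\alpha)\xrightarrow{\as}0$ under absolute continuity and contiguous support, whereas you propose to re-derive it via the no-plateau identity $P_\alpha=\overline{\{\HD(\cdot;P)>\alpha\}}$ and an inner/outer sandwiching argument; this is the idea behind Dyckerhoff's result and is valid, but it is additional work that the paper outsources to a citation. You also make explicit (via the Vapnik--Chervonenkis bound on halfspace classes) the $\mathcal O_\PP(n^{-1/2})$ rate for the $\HD$ component that is needed for the $\IDa$ display, which the paper leaves implicit.
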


Thanks to the sharp bound on the volume difference of convex bodies devised in Lemma~\ref{lemma:Lipschitz} in the appendix, it is possible to state an explicit deviation inequality such as that from \cite[Theorem~2]{Brunel2019}. We omit this result for brevity. 

Note that the technical Assumptions~1 and~2 from \cite{Brunel2019} are not restrictive at all. They are satisfied, for instance, if $P$ has a density that is bounded away from zero in a large enough superset of $P_\alpha$, if the density of $P$ is continuous, positive and decreases fast enough \citep[Assumption~3]{Brunel2019}, or, for the case of elliptically symmetric distributions, if the density of $P$ is continuous and positive at the boundary of $P_\alpha$. 

Neither illumination nor the illumination depth are consistent uniformly over unbounded sets in $\R^d$. This is illustrated in an example in Section~\ref{section:consistency} in the appendix. This does not limit practical applications. For any sequence of compact sets $K_n$, allowed to increase in size with $n$, Theorem~\ref{theorem:consistency} guarantees uniform consistency.

\subsection{Robustness}	\label{section:robustness}

We now turn to the robustness properties of the illumination. For a data set that corresponds to an empirical measure $P_n \in \Prob$, the addition breakdown point of an estimator $T = T(P_n)$ is defined \cite{Donoho1982} as
	\begin{equation}	\label{breakdown point}
	BP\left(T,P_n\right) = \min\left\{ \frac{m}{n + m} \colon \sup_{Y^{(m)}} \distg\left(T\left(Q_{m+n}\right),T(P_n)\right) = \infty \right\},	
	\end{equation}
where $Y^{(m)}$ is an $m$-tuple of (not necessarily distinct) points in $\R^d$, $Q_{m+n}$ is the empirical measure that assigns probability $1/(m+n)$ to all the data points from $P_n$ and $Y^{(m)}$, and $\distg$ is an appropriate distance in the target space of $T$. For the usual halfspace depth, the finite sample breakdown point of the central region $T_\delta(P_n) = P_{n, \delta}$ for $\delta \in (0, \Pi(P_n))$ can be derived from \cite[Section~3]{Donoho_Gasko1992}. With the Hausdorff distance of compact sets $K, L \subset \R^d$ 
	\begin{equation}	\label{Hausdorff distance}
	\Haus\left(K,L\right) = \max\left\{ \max_{x \in K} \inf_{y \in L} \left\Vert x - y \right\Vert, \max_{x \in L} \inf_{y \in K} \left\Vert x - y \right\Vert \right\}	
	\end{equation} 
in \eqref{breakdown point} in place of $\distg$, from the advances in \cite{Donoho_Gasko1992} (for a formal proof see Section~\ref{proof:breakdown} in the appendix) it can be shown that
	\begin{equation}	\label{halfspace depth BP}
	BP\left( T_\delta, P_n \right) = \frac{\left\lceil \delta/(1-\delta) n \right\rceil}{n + \left\lceil \delta/(1-\delta) n \right\rceil} \quad \mbox{if }\delta \leq \Pi(P_n)/(1 + \Pi(P_n)).	
	\end{equation} 
																			 %\begin{cases}
																			 %\frac{\left\lceil \delta/(1-\delta) n \right\rceil}{n + \left\lceil \delta/(1-\delta) n \right\rceil} & \mbox{if }\delta < \Pi(P_n)/(1 + \Pi(P_n)), \\
																			 %\frac{\Pi(P_n)}{1 + \Pi(P_n)} & \mbox{otherwise}.
																			 %\end{cases}	\]
For $\delta$ close to zero and $n$ large, the finite sample breakdown point of $P_{n,\delta}$ is of order $\delta$. This corroborates the well known fact that the outer, extremal regions of the halfspace depth are not robust. For instance, in any configuration of $n$ points in $\R^d$, to dislocate the largest proper depth region $P_{n,1/n}$ --- the convex hull of the sample points --- it is enough to add a single contaminating observation. On the other hand, for any $\delta \geq \Pi(P_n)/(1 + \Pi(P_n))$, the central region $P_{n,\delta}$ is rather stable with a positive breakdown point not smaller than $\Pi(P_n)/(1 + \Pi(P_n))$. If $P_n$ corresponds to a random sample from $P\in\Prob$, as $n\to\infty$ the latter breakdown point approaches $\Pi(P)/(1+\Pi(P))$ \citep[Proposition~3.3]{Donoho_Gasko1992}. Thus, the more regular $P$ is, the more robust its inner halfspace depth central regions are. 

We now give expressions for the finite sample breakdown point of the illumination.  

\begin{theorem}	\label{theorem:breakdown}
For an empirical measure $P_n\in\Prob$, $\alpha < \Pi(P_n)$, and 
	\begin{equation}	\label{level set of illumination}
	T_{\alpha,\delta}(P_n) = \left\{ x \in \R^d \colon \Ill(x;P_{n,\alpha})/\vol{P_{n,\alpha}} \leq \delta \right\}
	\end{equation}
for $\delta \geq 1$, we have that
	\[	
	\begin{aligned}
	BP\left(T_{\alpha,\delta},P_n\right) & = \frac{\left\lceil \alpha/(1-\alpha) n \right\rceil}{n + \left\lceil \alpha/(1-\alpha) n \right\rceil} & & \mbox{if }\alpha \leq \Pi(P_n)/(1 + \Pi(P_n)), \\
	BP\left(T_{\alpha,\delta},P_n\right) & \geq \frac{\Pi(P_n)}{1 + \Pi(P_n)} & & \mbox{otherwise}.
																			 %\begin{cases}
																			 %\frac{\left\lceil \alpha/(1-\alpha) n \right\rceil}{n + \left\lceil \alpha/(1-\alpha) n \right\rceil} & \mbox{if }\alpha < \Pi(P_n)/(1 + \Pi(P_n)), \\
																				%\frac{\Pi(P_n)}{1 + \Pi(P_n)} & \mbox{otherwise}.
																				%\end{cases}	
	\end{aligned}
	\]
If $P_n$ is the empirical measure of a random sample from $P\in\Prob$ of size $n$, then it almost surely holds true that
	\[	\lim_{n \to \infty} BP\left(T_{\alpha,\delta},P_n\right) = 
																													  \begin{cases}
																														\alpha & \mbox{if } \alpha < \Pi(P)/(1 + \Pi(P)),	\\
																														\frac{\Pi(P)}{1 + \Pi(P)} & \mbox{otherwise}.
																														\end{cases}	\]
\end{theorem}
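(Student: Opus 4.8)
\textbf{Reduction.} Put $\Phi_\delta(K)=\{x\in\R^d:\Ill(x;K)\le\delta\vol{K}\}$ for a convex body $K$, so that $T_{\alpha,\delta}(P_n)=\Phi_\delta(P_{n,\alpha})$ in \eqref{level set of illumination} and $\Phi_\delta(K)$ is the illumination body $K^{(\delta-1)\vol{K}}$. The plan is to transport the known breakdown behaviour of the halfspace central region $T_\alpha(P_n):=P_{n,\alpha}$ --- that is, \eqref{halfspace depth BP} with its $\delta$ replaced by $\alpha$, together with the robustness ``$BP(T_\alpha,P_n)\ge\Pi(P_n)/(1+\Pi(P_n))$ for $\alpha\ge\Pi(P_n)/(1+\Pi(P_n))$'' recorded just before the theorem --- through the map $\Phi_\delta$, using only: (a) $K\subseteq\Phi_\delta(K)$ always (since $\co{K\cup\{x\}}=K$ for $x\in K$ and $\delta\ge1$), and $\Phi_\delta(K)$ is defined exactly when $\vol{K}>0$; and (b) for fixed $R,\delta,d$ there is $R'=R'(R,\delta,d)$, \emph{independent of the shape and of the volume of $K$}, with $\Phi_\delta(K)\subseteq\{y:\|y\|\le R'\}$ whenever $K\subseteq\{y:\|y\|\le R\}$. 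Granting (a)--(b): for every contamination $Q_{m+n}$ the set $T_{\alpha,\delta}(Q_{m+n})=\Phi_\delta(Q_{m+n,\alpha})$ is undefined exactly when $Q_{m+n,\alpha}$ is lower-dimensional, and otherwise it stays inside a fixed ball if and only if $Q_{m+n,\alpha}$ does (``if'' from (b), ``only if'' from (a)). Since --- just as for $T_\alpha$, cf.\ the discussion around \eqref{halfspace depth BP} --- a lower-dimensional or empty realisation makes the Hausdorff distance in \eqref{breakdown point} meaningless and hence drops out of the supremum, this yields $BP(T_{\alpha,\delta},P_n)=BP(T_\alpha,P_n)$, with the Hausdorff distance \eqref{Hausdorff distance} in both.

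\textbf{The geometric lemma (b) --- the main obstacle.} Here lies the real work: the illumination body of a convex body confined to a ball of radius $R$ must itself be confined to a ball whose radius knows nothing about how thin or how small $K$ is. I would prove this by slicing. Fix $c\in K$; for $x$ with $\|x\|$ large put $u=(x-c)/\|x-c\|$, and for a level $s$ at which the slice $S_s:=K\cap\{y:u\tr y=s\}$ is $(d-1)$-dimensional note that the cone $\co{S_s\cup\{x\}}$ lies in $\co{K\cup\{x\}}$ and that its part at heights $u\tr y>R$ --- disjoint from $K$, since $K\subseteq\{y:|u\tr y|\le R\}$ --- has $d$-volume at least $c_d\,\vol[d-1]{S_s}\,\|x\|$ once $\|x\|$ exceeds a fixed multiple of $R$, with $c_d>0$ depending only on $d$. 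Choosing $s$ to maximise the $(d-1)$-volume of the slice and using $\vol{K}\le 2R\max_{s}\vol[d-1]{S_s}$ (the width of $K$ along $u$ is at most $2R$) gives
\[
\Ill(x;K)\ \ge\ \vol{K}\,\bigl(1+c_d\,\|x\|/R\bigr)\qquad\text{for }\|x\|\ \text{a fixed multiple of }R,
\]
so $\Ill(x;K)>\delta\vol{K}$, i.e.\ $x\notin\Phi_\delta(K)$, once $\|x\|$ exceeds a suitable $R'=R'(R,\delta,d)$ that is a constant multiple of $\delta R$. The factor $\vol{K}$ has cancelled: that is the crux, and it is why $R'$ does not depend on $K$. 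The bookkeeping of the constants and the reduction to generic levels $s$ are routine; alternatively a comparable estimate can be read off from Lemma~\ref{lemma:Lipschitz}.

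\textbf{The two regimes and the limit.} With $BP(T_{\alpha,\delta},P_n)=BP(T_\alpha,P_n)$ in hand, the value for $\alpha\le\Pi(P_n)/(1+\Pi(P_n))$ is exactly \eqref{halfspace depth BP} (with $\delta:=\alpha$), and the bound for $\alpha>\Pi(P_n)/(1+\Pi(P_n))$ is the inner-region robustness quoted above, a consequence of \citet[Section~3]{Donoho_Gasko1992}. If one prefers to argue directly, the ``$\ge$'' direction is elementary: whenever $m/(m+n)<\alpha$, any $z\in Q_{m+n,\alpha}$ forces at least $\alpha(m+n)-m>0$ of the original $n$ points into each halfspace bounded by a hyperplane through $z$, so ranging over $\pm$ the coordinate directions confines $z$ to a box depending only on $P_n,\alpha,m,n$; hence by (b), $T_{\alpha,\delta}(Q_{m+n})$ stays in a fixed ball for every contamination of size $m\le\lceil\alpha n/(1-\alpha)\rceil-1$, the degenerate realisations not counting, whence $BP(T_{\alpha,\delta},P_n)\ge\lceil\alpha n/(1-\alpha)\rceil/(n+\lceil\alpha n/(1-\alpha)\rceil)$, which in the second regime is in particular at least $\Pi(P_n)/(1+\Pi(P_n))$. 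For the matching upper bound in the first regime: placing $m=\lceil\alpha n/(1-\alpha)\rceil$ points at a far $y$ makes $\HD(y;Q_{m+n})\ge m/(m+n)\ge\alpha$, so $y\in Q_{m+n,\alpha}\subseteq T_{\alpha,\delta}(Q_{m+n})$; for $y$ in general position $Q_{m+n,\alpha}$ is full-dimensional, so $T_{\alpha,\delta}(Q_{m+n})$ is defined, and its Hausdorff distance to the fixed set $T_{\alpha,\delta}(P_n)$ tends to $\infty$ with $\|y\|$. Finally $\Pi(P_n)\xrightarrow[n\to\infty]{\as}\Pi(P)$, hence $\Pi(P_n)/(1+\Pi(P_n))\to\Pi(P)/(1+\Pi(P))$ almost surely; if $\alpha<\Pi(P)/(1+\Pi(P))$ the first regime eventually holds and $\lceil\alpha n/(1-\alpha)\rceil/(n+\lceil\alpha n/(1-\alpha)\rceil)\to\alpha$, while if $\alpha>\Pi(P)/(1+\Pi(P))$ the second regime eventually holds, giving $\liminf_n BP(T_{\alpha,\delta},P_n)\ge\Pi(P)/(1+\Pi(P))$ with the matching $\limsup$ furnished by \citet[Proposition~3.3]{Donoho_Gasko1992} (the breakdown point of the inner halfspace central regions of $P_n$ converges to $\Pi(P)/(1+\Pi(P))$ a.s.), transported through $\Phi_\delta$ as in the first paragraph; the boundary case $\alpha=\Pi(P)/(1+\Pi(P))$ follows by squeezing between the two. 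Besides (b), the only delicacy is keeping track of which contaminated regions $Q_{m+n,\alpha}$ are lower-dimensional --- for those $T_{\alpha,\delta}$ is undefined and they are excluded from the supremum in \eqref{breakdown point}.
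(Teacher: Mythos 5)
Your proposal is correct and follows essentially the same route as the paper: both reduce the breakdown of $T_{\alpha,\delta}$ to the breakdown of the underlying central region $P_{n,\alpha}$ and then invoke the Donoho--Gasko computations for halfspace depth regions. The paper compresses the reduction into a one-sentence observation (``the illumination of $x$ tends to infinity if and only if $P_{n,\alpha}$ breaks down'') whereas you spell it out via the geometric lemma~(b) with the slicing/cone-volume estimate showing that $\Ill(x;K)/\vol{K}\geq 1+c_d\|x\|/R$ once $\|x\|$ is a fixed multiple of $R$ --- a worthwhile clarification, since the key cancellation of $\vol{K}$ is exactly what makes the reduction legitimate and the paper does not make it explicit.
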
 

Theorem~\ref{theorem:breakdown} asserts that the illumination is quite robust --- unlike for the halfspace depth, its breakdown point does not depend on $\delta$. For $\alpha = \Pi(P)/(1 + \Pi(P))$ we have, in view of Remark~\ref{maximal depth}, that for any configuration of $n$ points $P_n$ we have $BP\left(T_{\alpha,\delta},P_n\right) \geq 1/(d+2)$, and the illumination always possesses a strictly positive breakdown point. This simple bound is, however, rather pessimistic. If $P_n$ is a random sample from a log-concave distribution, $\lim_{n\to\infty} BP\left(T_{\alpha,\delta},P_n\right) = 1/(1 + e) \approx 0.27$ almost surely, and for $P$ halfspace symmetric, $\lim_{n\to\infty} BP\left(T_{\alpha,\delta},P_n\right) = 1/3$ almost surely. Overall, for a random sample of size $n$ from $P$ that is regular enough, it takes, with large probability, at least almost $m = n/2$ points to be added to the dataset to disturb the illumination procedure completely. This contrasts sharply with the usual halfspace depth. According to \eqref{halfspace depth BP}, for any distribution $P$, $P_{n,\delta}$ alone will be disrupted completely already if around $m = n \delta /(1 - \delta)$ contaminants are strategically added to the sample. %, and the depth contours at smaller levels are extremely sensitive to contamination. Analogously, the illumination depth will be non-robust if a sequence of cut-offs $\alpha_n \to 0$ as $n\to\infty$ is employed. The same negative claim applies to the reconstruction method described in \cite{Einmahl_etal2015, He_Einmahl2017} based on the extreme value theory. 
For numerical results see Section~\ref{section:applications}.

\subsection{Computational cost}

Illumination is computed in two steps. Given a dataset $P_n$, firstly a single central region $P_{n,\alpha}$ is computed. This set is a convex polytope. In the second step, illumination of $x$ onto $P_{n,\alpha}$ is evaluated by employing algorithms for the computation of convex hulls of points and volumes of convex polytopes. 

Computation of $P_{n,\alpha}$ is generally a demanding task. For a single level $\alpha$ required for the illumination, recent advances made this feasible in dimension $d$ up to five or ten and moderate sample sizes $n$; see \cite{Liu_etal2019} and the references therein. 

Finding the illumination of $x$ is already quite well explored. Both problems of finding a convex hull of a dataset and its volume are standard in computational geometry. A great number of effective algorithms exists in this direction \cite{Bueler_etal2000}; for a more recent contribution see \cite{Emiris_Fisikopoulos2018}. 

In our \texttt{R} implementation we combine tools from package \texttt{TukeyRegion} with the \texttt{R} interface to the \texttt{Qhull}\footnote{\url{http://www.qhull.org/}} toolbox implemented in package \texttt{geometry}. This code, given in Appendix~\ref{appendix:code}, handles hundreds of observations in dimensions $d\leq 5$ without substantial difficulties, see Table~\ref{table:complexity}. More efficient algorithms for the computation of volumes of convex polytopes can be used to speed up the computation. The computation of the single halfspace depth region $P_{n,\alpha}$ is the true bottleneck of this procedure, especially in higher dimensions.

\begin{table}[ht]
\centering
\begin{tabular}{lll |rrr}%r}
  \hline
          \multicolumn{2}{l}{ Setup $\backslash$ \texttt{R} package}        &  & \multicolumn{1}{l}{ \texttt{TukeyRegion}} & \multicolumn{1}{l}{ \texttt{geometry}} & \multicolumn{1}{l}{ \texttt{ddalpha}} \\ %& \multicolumn{1}{l}{ \texttt{ddalpha}$_2$} \\ 
            &         &  & \multicolumn{1}{l}{                      } & \multicolumn{1}{l}{                   } & \multicolumn{1}{l}{                  } \\ % & \multicolumn{1}{l}{                    } \\ 
   \hline
	$n = 50$  & $d = 2$ &  &                  0.02 &               0.77 &              0.08 \\ %&                0.45 \\ 
            & $d = 3$ &  &                  0.03 &               1.03 &              0.05 \\ %&                0.50 \\ 
            & $d = 4$ &  &                  0.27 &               9.61 &              0.06 \\ %&                0.49 \\ 
  $n = 200$ & $d = 2$ &  &                  0.03 &               0.70 &              0.20 \\ %&                1.27 \\ 
            & $d = 3$ &  &                  0.54 &               1.88 &              0.21 \\ %&                1.38 \\ 
            & $d = 4$ &  &                295.64 &              70.36 &              0.20 \\ %&                1.55 \\ 
  $n = 500$ & $d = 2$ &  &                  0.31 &               0.71 &              0.51 \\ %&                3.27 \\ 
            & $d = 3$ &  &                  9.99 &               2.31 &              0.48 \\ %&                3.33 \\ 
            & $d = 4$ &  &              72010.61 &             220.21 &              0.50 \\ %&                3.38 \\ 
   \hline
\end{tabular}
\caption{Computation times (in seconds) for the evaluation of a single halfspace depth region $P_{n,\alpha}$ that is illuminated on (\texttt{TukeyRegion}); the illumination of $1000$ randomly sampled points onto $P_{n,\alpha}$ (\texttt{geometry}); and the usual halfspace depth of these $1000$ points w.r.t. $P_n$ (\texttt{ddalpha}). In all cases the depth is computed w.r.t. a random sample from a standard $d$-variate normal distribution of size $n$.}
\label{table:complexity}
\end{table}

\section{Illumination for elliptically symmetric distributions}	\label{section:elliptical distributions}

Now we focus our attention to elliptically symmetric distributions, or more generally, to those distributions $P$ whose halfspace depth central regions $P_\delta$ are close to ellipsoids. It may appear that the latter assumption is restrictive. Nonetheless, it is known that any sufficiently regular distribution $P$ possesses central regions $P_{\delta}$ that are bound to have almost ellipsoidal shapes. This was first observed by \citet{Milman_Pajor1989}, see the proposition in the appendix of that paper. There it is shown that for $P$ uniform on a symmetric convex body $K$ every $P_\delta$ is uniformly, up to a known constant, isomorphic to an ellipsoid. References to further extensions of that groundbreaking result to (asymmetric) log-concave or $s$-concave measures with additional discussion can be found in \cite[Section~7]{Nagy_etal2018s}. Thus, even though formally the restriction to elliptical symmetry in this section is real, at least heuristically all these results will hold true more widely.

We start by collecting some useful information about elliptically symmetric distributions. For references to these results see \cite{Serfling_symmetry, Fang_etal1990}. $P \in \Prob$ is said to be spherically symmetric if the measure $P$ is invariant with respect to all orthogonal transformations on $\R^d$. It is elliptically symmetric if it can be represented as an affine image of a spherically symmetric distribution --- we say that $X \sim P \in \Prob$ is elliptically symmetric if $X \eqd \mu + A Z$ for $\mu \in \R^d$, $A \in \R^{d \times k}$, and $Z = \left(Z_1, \dots, Z_k\right)\tr \sim Q \in \Prob[\R^k]$ is spherically symmetric. The symbol $\eqd$ stands for ``is equal in distribution". Note that $Q$ is uniquely characterized by the symmetric\footnote{By symmetry of the distribution function we mean that $F(z) = 1 - F(-z)$ at all points of continuity of $F$.} distribution function $F(z) = \PP\left( Z_1 \leq z \right)$, $z \in \R$.  We also write $X \sim P = EC\left(\mu,\Sigma,F\right)$ for $\Sigma = A A\tr \in \R^{d \times d}$. Because $\mu + c A Z = \mu + A (c Z)$ and $\widetilde{F}(z) = \PP\left( c Z_1 \leq z\right) = F\left(z/c\right)$ for any $c>0$ and $z \in \R$, $EC\left(\mu,c^2 \Sigma, F\right) = EC\left(\mu, \Sigma, \widetilde{F}\right)$. To identify $P$ uniquely, we therefore in this section consider mainly elliptically symmetric distributions whose scatter matrix $\Sigma$ is normalized to have a unit determinant $\left\vert \Sigma \right\vert = 1$.% The last restriction excludes degenerate distributions. But, in accordance with the discussion above, we consider mainly absolutely continuous measures. That assumption is made for simplicity; for instance, for $F$ with discontinuities analogous results are straightforward and omitted.

For $X \sim P = EC\left(\mu,\Sigma,F\right) \in \Prob$ with $\Sigma$ positive definite and $Q = \Sigma^{-1/2}\left(X - \mu\right)$ the spherically symmetric affine image of $P$, affine invariance of $\HD$ gives a simple expression for the halfspace depth of any $x \in \R^d$
	\begin{equation}	\label{elliptical depth}
	\HD\left(x;P\right) = \HD\left(\Sigma^{-1/2}\left(x - \mu\right);Q\right) = F\left( - \left\Vert \Sigma^{-1/2}\left(x - \mu\right) \right\Vert \right) = F\left( - \dist\left(x,\mu\right) \right).	
	\end{equation}
It is also not hard to realise (see, e.g., \citep[Theorem~34]{Nagy_etal2018s}) that $P = EC(\mu,\Sigma,F)$ if and only if all the halfspace depth central regions $P_{\alpha}$ with $0 < \alpha < 1/2$ are ellipsoids of the form
	\begin{equation*}	%\label{central region}
	P_{\alpha} = \left\{ x \in \R^d \colon \dist\left(x,\mu\right) \leq - F^{-1}\left(\alpha\right) = F^{-1}\left( 1 - \alpha \right) \right\} = \Ell[\mu,\Sigma \left(F^{-1}\left(1-\alpha\right)\right)^2].
	\end{equation*}
Thus, by the expression for the illumination of ellipsoids \eqref{MD and illumination}, the lower level sets of the illumination \eqref{level set of illumination} with $\delta \geq 1$ are also all ellipsoids
	\[
	\begin{aligned}
	T_{\alpha,\delta}(P) & = \left\{ x \in \R^d \colon \dist[\Sigma \left( F^{-1}\left(1 - \alpha\right) \right)^2]\left(x,\mu\right) \leq g_d^{-1}\left(\delta\right)	\right\} \\
	& = \left\{ x \in \R^d \colon \dist(x,\mu) \leq F^{-1}\left(1 - \alpha\right) g_d^{-1}\left(\delta\right)	\right\}.
	\end{aligned}
	\]
In particular, for $x \notin P_{\alpha}$, 
	\begin{equation}	\label{illumination for measure}
	%\begin{aligned}
	g_d^{-1}\left(\frac{\Ill\left(x;P_\alpha\right)}{\vol{P_\alpha}}\right) %& = g_d^{-1}\left(\frac{\Ill\left(x;\Ell[\mu, \Sigma \left(F^{-1}\left(1 - \alpha\right)\right)^2]\right)}{\vol{\Ell[\mu, \Sigma \left(F^{-1}\left(1 - \alpha\right)\right)^2]}}\right) 
	= \dist[\Sigma \left(F^{-1}\left(1 - \alpha\right)\right)^2](x,\mu) 
	= \frac{\dist(x,\mu)}{F^{-1}\left(1 - \alpha\right)},
	%\end{aligned}
	\end{equation}
and both the halfspace depth upper level sets and the illumination lower level sets are ellipsoids centred at $\mu$ with the same orientation as the Mahalanobis ellipsoid $\Ell$, or equivalently, the density contours of $P$. A first application of this property is straightforward. For $P$ with a unimodal elliptically symmetric density, the depth-induced centre-outward ordering of the points $x\in\R^d$ is a function of their Mahalanobis distance $\dist(x,\mu)$ from the mode $\mu$ --- the smaller $\dist\left(x,\mu\right)$, the more central $x$ is. To get a robust, affine invariant depth-based estimator of $\dist\left(x,\mu\right)$, employ \eqref{elliptical depth} and \eqref{illumination for measure} to see that
	\begin{equation}	\label{Mahalanobis distance population}
	\dist(x,\mu) =  \begin{cases}
											F^{-1}\left(1 - \HD\left(x;P\right)\right) & \mbox{if }\HD\left(x;P\right) \geq \alpha, \\
											F^{-1}\left(1 - \alpha\right) g_d^{-1}\left(\frac{\Ill\left(x;P_\alpha\right)}{\vol{P_\alpha}}\right) & \mbox{otherwise}.
											\end{cases}
	\end{equation}
Because $F$ is non-decreasing and $g_d^{-1}$ strictly increases on its domain by Lemma~\ref{lemma:g properties}, $\dist(x_1,\mu) \leq \dist(x_2,\mu)$ is equivalent with one of the three possible situations: \begin{enumerate*}[label=(\roman*)] \item either $\HD\left(x_1;P\right) \geq \alpha > \HD\left(x_2;P\right)$; or \item if both depths are high, $\HD\left(x_1;P\right) \geq \HD\left(x_2;P\right) \geq \alpha$; or \item if both depths are low, $\alpha > \max\left\{\HD\left(x_1;P\right),\HD\left(x_2;P\right)\right\}$, and at the same time $\Ill\left(x_1;P_\alpha\right) \leq \Ill\left(x_2;P_\alpha\right)$. \end{enumerate*} This yields the following centre-outwards ranking procedure for points $x_1, \dots, x_m \in \R^d$ (the lowest rank is for the most central position):
	\begin{enumerate}[label=(\roman*)] 
	\item compute the depth $\IDa$ of all points $x_1, \dots, x_m$; 
	\item the $k$ points whose halfspace depth is at least $\alpha$ are ranked as the $k$ most central points $x_{(1)} \preceq \dots \preceq x_{(k)}$ according to their decreasing halfspace depth, i.e. $x_i \preceq x_j$ if $\HD\left(x_i;P\right) \geq \HD\left(x_j;P\right)$;
	\item the $m-k$ remaining points are ranked as the less central points $x_{(k+1)} \preceq \dots \preceq x_{(m)}$ according to their increasing illumination, i.e. $x_i \preceq x_j$ if $\Ill\left(x_i;P_\alpha\right) \leq \Ill\left(x_j;P_\alpha\right)$. 
	\end{enumerate}
This robust ranking can produce ties. However, they are easy to break. If $\HD\left(x_i;P\right) = \HD\left(x_j;P\right) \geq \alpha$, use the illumination and set $x_i \prec x_j$ if $\Ill\left(x_i;P_{\alpha^\prime}\right) < \Ill\left(x_j;P_{\alpha^\prime}\right)$ for some $\alpha^\prime > \HD\left(x_i;P\right)$. If the original tied ranks of $x_i$ and $x_j$ were decided from the illumination, set $x_i \prec x_j$ if $\HD\left(x_i;P\right) > \HD\left(x_j;P\right)$. For $x_1, \dots, x_m$ sampled randomly from a continuous distribution, we identify the ranks uniquely, with no ties, almost surely. The performance of these ranking procedures is demonstrated in Section~\ref{section:tie-breaking}.

Suppose now that we have an estimator $F_n$ of $F$ from $P = EC\left(\mu,\Sigma,F\right)$ at hand. It could be obtained, for instance, by first performing a robust whitening transformation of the random sample $X_1, \dots, X_n$ from $P$, i.e. considering $Z_i = \widehat{\Sigma}^{-1/2}\left(X_i - \widehat{\mu}\right)$, $i=1, \dots, n$, for some robust location and scatter estimators $\widehat{\mu}$ and $\widehat{\Sigma}$ (in accordance with our parametrization, $\left\vert \widehat{\Sigma} \right\vert = 1$). The estimators $\widehat{\mu}$ and $\widehat{\Sigma}$ could be, for instance, the halfspace median (the barycentre of the points that maximize $\HD\left(\cdot;P_n\right)$), and the matrix of unit determinant proportional to the halfspace scatter median matrix (the matrix that maximizes the scatter extension of the halfspace depth, see \cite{Paindaveine_VanBever2018}). In the second step, $F$ is estimated simply by the empirical distribution function $F_n$ of any univariate marginal distribution of $Z_1, \dots, Z_n$. Since $F_n$ estimates a univariate distribution function, it does not suffer from the curse of dimensionality, and can be expected to have decent theoretical properties. More involved estimators of $F$ such as that from \cite{Liebscher2005} can be employed as well.

Because $F$ is symmetric, we can assume that also $F_n$ possesses the symmetry property. From a possibly non-symmetric estimator $\widetilde{F}_n$ of $F$ this can be achieved by symmetrization: set $F_n$ to be the right continuous version of the function $t \mapsto \left(\widetilde{F}_n(t) + 1 - \widetilde{F}_n(-t)\right)/2$. This procedure improves the properties of the basic estimator $\widetilde{F}_n$ if $F$ is symmetric \cite{Schuster1973}.

Finally, assume that $F_n$ is non-decreasing and affine invariant, the latter meaning that $F_n$ based on $X_1, \dots, X_n$ is the same as $F_n$ constructed from $A X_1 + b, \dots, A X_n + b$ for any $A \in \R^{d \times d}$ non-singular and $b \in \R^d$. % and that unless $P_n$ is completely degenerate in a singleton, i.e. that $P_n\left(\left\{x\right\}\right) = 1$, the estimator $F_n$ is non-degenerate too in the sense that $F_n(0) < 1$. 
These conditions are natural in our setting and are all satisfied by most reasonable estimators.
 
\subsection{Estimation of the Mahalanobis distance}	\label{section:Mahalanobis distance}

From \eqref{Mahalanobis distance population} we see that the Mahalanobis distance $\dist\left(x,\mu\right)$ can be estimated directly from the illumination depth, given that an estimator of $F$ is at hand. Consider the estimator
	\begin{equation*}	%\label{robust Mahalanobis distance}
		 M_\alpha\left(x;P_n\right) = 	\begin{cases}
																		F_n^{-1}\left( 1 - \HD\left(x;P_n\right) \right) & \mbox{if }\HD\left(x;P_n\right) \geq \alpha, \\
																		F_n^{-1}\left(1 - \alpha\right) g_d^{-1}\left(\frac{\Ill\left(x;P_{n,\alpha}\right)}{\vol{P_{n,\alpha}}}\right) & \mbox{otherwise}.
											\end{cases}
	\end{equation*}
Note that $F_n$ needs to be known only in the central part of the distribution; in the more extreme regions, $M_\alpha$ is proportional to a known function of the illumination only. Several desirable properties of $M_\alpha\left(\cdot;P_n\right)$ are summarized in the following theorem.

\begin{theorem}	\label{theorem:M properties}
Let $P_n \in \Prob$ be the empirical measure of a random sample $X_1, \dots, X_n$ from $P\in\Prob$ that is not concentrated in a singleton, let $F_n^{-1}(1 - \alpha) > 0$ with $0<\alpha<1/2$, and let $x \in \R^d$.
	\begin{enumerate}[label=(\roman*), ref=(\roman*)]
	\item \label{M properties i} If $P = EC\left(\mu,\Sigma,F\right)$ and $F_n$ is a Fisher consistent estimator of $F$, then $M_\alpha\left(x;P_n\right)$ is a Fisher consistent estimator of the Mahalanobis distance $\dist(x,\mu)$.
	\item \label{M properties ii} $M_\alpha$ is affine invariant, i.e. $M_\alpha\left(A x + b; P_{A X + b, n}\right) = M_\alpha\left(x;P_n\right)$ for any non-singular matrix $A \in \R^{d \times d}$ and $b \in \R^d$, where $P_{A X + b, n}$ is the empirical measure of the transformed random sample $A X_1 + b, \dots, A X_n + b$. 
	\item \label{M properties iii} For any $\delta \geq F_n^{-1}\left(1 - \Pi(P_n)\right)$, the lower level set
		\begin{equation}	\label{level set of M}
		\left\{ x \in \R^d \colon M_\alpha\left(x;P_n\right) \leq \delta \right\} 
		\end{equation}
	is either the unique halfspace median of $P_n$, or a convex body. 
	\item \label{M properties iv} As $n\to\infty$, let the limiting addition breakdown point \eqref{breakdown point} of the estimator $T(P_n) = F_n^{-1}\left(1 - \alpha\right)$ with the metric $\distg(s,t) = \left\vert \log(s) - \log(t) \right\vert$ for $s,t>0$ be at least $\min\left\{\alpha,1/3\right\}$ almost surely. Then for any $\delta > \lim_{n\to\infty} F_n^{-1}\left( 1 - \Pi\left(P_n\right)\right)$, the limiting addition breakdown point of the level set \eqref{level set of M} with respect to the Hausdorff distance is $\min\left\{\alpha,1/3\right\}$ almost surely.
	\end{enumerate}
\end{theorem}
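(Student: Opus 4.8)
The plan is to handle the four items in turn, leaning on the elliptical identities \eqref{elliptical depth}, \eqref{illumination for measure}, \eqref{Mahalanobis distance population} and on Theorems~\ref{theorem:affine invariance}--\ref{theorem:breakdown}; only item~(iv) requires real work. For (i), Fisher consistency means evaluating $M_\alpha$ at the population $P=EC(\mu,\Sigma,F)$ with the true $F$ in place of $F_n$, and this is exactly \eqref{Mahalanobis distance population}. Concretely: if $\HD(x;P)\geq\alpha$, then by \eqref{elliptical depth} and symmetry of $F$ one has $F^{-1}(1-\HD(x;P))=F^{-1}(F(\dist(x,\mu)))=\dist(x,\mu)$; if $\HD(x;P)<\alpha$, then by \eqref{illumination for measure} the factor $g_d^{-1}(\Ill(x;P_\alpha)/\vol{P_\alpha})$ equals $\dist(x,\mu)/F^{-1}(1-\alpha)$, so again $M_\alpha(x;P)=\dist(x,\mu)$; the two branches agree on $\partial P_\alpha$ because $g_d^{-1}(1)=1$ and there $F^{-1}(1-\alpha)=\dist(x,\mu)$. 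Combining this identity with Fisher consistency of $F_n$ for $F$ (hence of $F_n^{-1}$ on the relevant subinterval) gives (i).

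Item (ii) is a routine equivariance chase: $\HD$ is affine invariant, $P_{n,\alpha}$ is affine equivariant, the volume scales by $|\det A|$ so $\Ill(\cdot;\cdot)/\vol{P_{n,\alpha}}$ is affine invariant (cf.\ Theorem~\ref{theorem:affine invariance}(i)), $g_d^{-1}$ is applied to that invariant, and $F_n$ is affine invariant by assumption; composing yields $M_\alpha(Ax+b;P_{AX+b,n})=M_\alpha(x;P_n)$. For (iii) I would set $s=F_n^{-1}(1-\alpha)>0$ and analyse the level set \eqref{level set of M} by the definitional case split. On $\{\HD(\cdot;P_n)\geq\alpha\}$ the condition $M_\alpha\leq\delta$ reads $\HD(\cdot;P_n)\geq 1-F_n(\delta)$ (monotone generalised inverse); on $\{\HD(\cdot;P_n)<\alpha\}$ one has $M_\alpha\geq s$ because $g_d^{-1}\geq 1$, so such points qualify only when $\delta\geq s$, in which case the condition reads $\Ill(\cdot;P_{n,\alpha})/\vol{P_{n,\alpha}}\leq g_d(\delta/s)$ with $\delta/s\geq 1$. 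Since $\delta\geq F_n^{-1}(1-\Pi(P_n))$ forces $1-F_n(\delta)\leq\Pi(P_n)$, this gives: for $\delta<s$ the level set is the halfspace central region of $P_n$ at level $1-F_n(\delta)\in(\alpha,\Pi(P_n)]$ --- the (unique) halfspace median when that level equals $\Pi(P_n)$, and a convex body otherwise; for $\delta\geq s$ it equals $T_{\alpha,g_d(\delta/s)}(P_n)$, the illumination body of the polytope $P_{n,\alpha}$ (note $P_{n,\alpha}\subset T_{\alpha,g_d(\delta/s)}(P_n)$ as $g_d(\delta/s)\geq 1$), which is a convex body by the properties recalled in Section~\ref{section:illumination bodies}. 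The only delicacy is the bookkeeping with generalised inverses and the degenerate case of a non-singleton halfspace median.

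For (iv) I would identify the Hausdorff breakdown of the level set $L(Q):=\{x:M_\alpha(x;Q)\leq\delta\}$ with the minimum of the breakdown points of the primitives exposed in (iii): the halfspace central/median regions of the (contaminated) sample, whose limiting breakdown is governed by \eqref{halfspace depth BP} and, at level $\Pi$, tends to $\Pi(P)/(1+\Pi(P))$; the illumination body of $P_{n,\alpha}$, whose limiting breakdown is $\alpha$ when $\alpha<\Pi(P)/(1+\Pi(P))$ and $\Pi(P)/(1+\Pi(P))$ otherwise by Theorem~\ref{theorem:breakdown}; and the scalar $s_n=F_n^{-1}(1-\alpha)$ in the $|\log s-\log t|$ metric, which enters non-linearly through $g_d(\delta/s_n)$ and whose breakdown is at least $\min\{\alpha,1/3\}$ by hypothesis. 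For the lower bound I would show that under fewer than $\min\{\alpha,1/3\}$ added mass none of these breaks: the contaminated median region stays non-degenerate, so the contaminated $\Pi$ and the corresponding quantile stay bounded and hence, using $\delta>\lim_n F_n^{-1}(1-\Pi(P_n))$, $L(Q)\neq\emptyset$; $P_{n,\alpha}$ stays inside a fixed ball and contains a fixed ball (Theorem~\ref{theorem:breakdown}); $s_n$ stays in a compact subinterval of $(0,\infty)$ (hypothesis), so $g_d(\delta/s_n)$ stays in a compact subinterval of $[1,\infty)$; the explicit volume/Hausdorff stability estimate of Lemma~\ref{lemma:Lipschitz} then propagates all this to $L(Q)$ itself, which therefore neither escapes to infinity nor collapses.

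For the upper bound I would realise the value $\min\{\alpha,1/3\}$ by the worst-case placement of contaminants driving the least robust primitive to infinity: the extremal configuration behind \eqref{halfspace depth BP}/Theorem~\ref{theorem:breakdown} pushes $P_{n,\alpha}$, and with it the central region or illumination body that $L(Q)$ contains, off to infinity; in the elliptically symmetric setting of this section $\Pi(P)=1/2$, so $\Pi(P)/(1+\Pi(P))=1/3$ and the two bounds meet at $\min\{\alpha,1/3\}$. The almost-sure and limiting character of the conclusion then follows from the almost-sure limits in Theorem~\ref{theorem:breakdown}, the almost-sure convergence $\Pi(P_n)\to\Pi(P)$ (\cite[Proposition~3.3]{Donoho_Gasko1992}), and the hypothesis on $s_n$. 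The hard part is precisely this interplay of the three instabilities --- in particular the regime switches that a contamination can induce in $M_\alpha(\cdot;Q)$ --- and making the lower and upper bounds match exactly at $\min\{\alpha,1/3\}$; the stability bound of Lemma~\ref{lemma:Lipschitz} is what carries the robustness half.
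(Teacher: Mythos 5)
Your proof is correct and follows essentially the same structure as the paper's own (very terse) argument: parts (i)--(iii) via the elliptical identities and affine equivariance of the building blocks, and part (iv) by reducing the Hausdorff breakdown of the level set to that of $P_{n,\alpha}$ (Theorem~\ref{theorem:breakdown}) and of the scalar quantile $F_n^{-1}(1-\alpha)$ (hypothesised), with non-emptiness forced by the condition on $\delta$. You also usefully make explicit that the value $\min\{\alpha,1/3\}$ in (iv) implicitly requires $\Pi(P)=1/2$, i.e.\ halfspace symmetry, which the paper's proof leaves unstated by placing this result in the elliptically symmetric context of Section~\ref{section:elliptical distributions}.
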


The choice of the metric in the breakdown point from part~\ref{M properties iv} is natural. For a non-degenerate symmetric distribution function $F$ and its symmetric estimator $F_n$, the quantile $F^{-1}\left(1 - \alpha\right)$ with $0 < \alpha < 1/2$ lies in the positive halfline, and a sequence of estimated quantiles that converges to zero is just as undesirable as that escaping to infinity. Note also that the condition on the breakdown point of the sample quantile is naturally satisfied for any reasonable estimator of $F$ --- already the (symmetrized) empirical distribution function of any univariate random sample obeys it. The additional condition on $\delta$ in part~\ref{M properties iv} guarantees that the level set \eqref{level set of M} is non-empty. If, for instance, $P = EC\left(\mu,\Sigma,F\right)$ is such that $F$ strictly increases in a neighbourhood of $0$, and $F_n$ is an estimator that is strongly uniformly consistent on this neighbourhood, this condition reduces to $\delta > 0$. 

In the following theorem we study the uniform consistency of our robust estimator of the Mahalanobis distance.	

\begin{theorem}	\label{theorem:M consistency}
Let $P = EC\left(\mu,\Sigma,F\right)\in\Prob$ be such that $\Sigma$ is positive definite, $0<\alpha<1/2$, $F$ is continuous at $0$ and strictly increasing on $[0,F^{-1}\left( 1 - \alpha\right)]$, it has a density that is bounded from below in a neighbourhood of $F^{-1}\left(1 - \alpha\right)$, and let $F_n$ satisfy 
	\begin{equation}	\label{F consistency}
	\sup_{t \in \R} \left\vert F_n(t) - F(t) \right\vert \xrightarrow[n\to\infty]{\as}0.	
	\end{equation}
 and 
	\begin{equation}	\label{quantile rate}
	F_n^{-1}\left(1 - \alpha\right) - F^{-1}\left(1 - \alpha\right) = o_{\PP}\left(1/R_n\right),	
	\end{equation}
where $R_n = o\left( n^{1/(2 (d - 1))} \right)$. 	Let $K_n$ be a sequence of sets with $K_n \subset \B\left(\mu,R_n\right)$. Then 
	\[	\sup_{x \in K_n} \left\vert M_\alpha(x;P_n) - \dist(x,\mu) \right\vert \xrightarrow[n\to\infty]{\PP}0.	\]
\end{theorem}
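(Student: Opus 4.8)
The plan is to split the supremum according to the two cases in the definition of $M_\alpha$ and to handle each separately, reducing everything to the locally uniform consistency of the halfspace depth, the consistency of the illumination from Theorem~\ref{theorem:consistency}, and the assumed consistency of $F_n$ and $F_n^{-1}(1-\alpha)$. First I would fix $\varepsilon>0$ and record the deterministic population identity \eqref{Mahalanobis distance population}: on $\{\HD(x;P)\geq\alpha\}$, $\dist(x,\mu)=F^{-1}(1-\HD(x;P))$, and on its complement, $\dist(x,\mu)=F^{-1}(1-\alpha)\,g_d^{-1}(\Ill(x;P_\alpha)/\vol{P_\alpha})$. Since $F$ is continuous at $0$, strictly increasing on $[0,F^{-1}(1-\alpha)]$, and has a density bounded below near $F^{-1}(1-\alpha)$, the population quantile map $\delta\mapsto F^{-1}(1-\delta)$ is continuous (indeed locally Lipschitz) on a neighbourhood of $\alpha$ in $[\alpha,1/2)$ and on a neighbourhood of $0$; this will let me convert small perturbations of depths into small perturbations of $M_\alpha$.

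Next I would compare $M_\alpha(x;P_n)$ with the ``plug-in population'' quantity obtained by replacing $F_n$ with $F$ but keeping $P_n$. By the triangle inequality it suffices to bound (a) $\sup_{x\in K_n}\bigl|M_\alpha(x;P_n)-\widetilde M_\alpha(x;P_n)\bigr|$, where $\widetilde M_\alpha$ uses $F$ in place of $F_n$, and (b) $\sup_{x\in K_n}\bigl|\widetilde M_\alpha(x;P_n)-\dist(x,\mu)\bigr|$. For (a): in the illumination branch the multiplicative error is exactly $|F_n^{-1}(1-\alpha)-F^{-1}(1-\alpha)|$ times $g_d^{-1}(\cdot)$, and $g_d^{-1}$ evaluated at $\Ill(x;P_{n,\alpha})/\vol{P_{n,\alpha}}$ is, by Theorem~\ref{theorem:consistency} and the continuity and monotonicity of $g_d^{-1}$ (Lemma~\ref{lemma:g properties}), at most $g_d^{-1}$ of something uniformly close to $\dist(x,\mu)/F^{-1}(1-\alpha)\le R_n/F^{-1}(1-\alpha)+o(1)$ on $K_n\subset\B(\mu,R_n)$; hence this term is $O_{\PP}(R_n)\cdot o_{\PP}(1/R_n)=o_{\PP}(1)$ by \eqref{quantile rate}. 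In the halfspace branch, $|F_n^{-1}(1-\HD(x;P_n))-F^{-1}(1-\HD(x;P_n))|\le \sup_t|F_n^{-1}(t)-F^{-1}(t)|$ restricted to the relevant range $[1-1/2,1-\alpha]$, which tends to $0$ a.s.\ by \eqref{F consistency} together with the density lower bound forcing inversion to be well-behaved on that compact range; and on $K_n$ the relevant depths stay in $[\,0,\Pi(P_n)\,]$, so only the part of the range up to $1-\alpha$ matters. For (b): inside the halfspace branch, local uniform consistency of $\HD(\cdot;P_n)$ (used in the excerpt, \cite{Donoho_Gasko1992}) gives $\sup_{x\in K_n}|\HD(x;P_n)-\HD(x;P)|=o_{\PP}(1)$ once $R_n$ does not grow too fast, and composing with the locally Lipschitz $F^{-1}(1-\cdot)$ controls this piece; inside the illumination branch, Theorem~\ref{theorem:consistency} gives $\sup_{x\in K_n}|\Ill(x;P_{n,\alpha})/\vol{P_{n,\alpha}}-\Ill(x;P_\alpha)/\vol{P_\alpha}|=o_{\PP}(1)$ under $R_n=o(n^{1/(2(d-1))})$, and multiplying by the fixed constant $F^{-1}(1-\alpha)$ and applying $g_d^{-1}$ (uniformly continuous on compacts, with derivative bounded by Lemma~\ref{lemma:g properties}) transfers this to $\widetilde M_\alpha$.

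The remaining subtlety is the \emph{mismatch of the two branches}: a point $x$ may satisfy $\HD(x;P_n)\geq\alpha$ while $\HD(x;P)<\alpha$ or vice versa, so that $M_\alpha(x;P_n)$ and $\dist(x,\mu)$ are computed by different formulas. Here I would use that the two defining formulas \emph{agree on the interface}: when $\HD(x;P)=\alpha$ exactly, $x\in\partial P_\alpha$, so $\Ill(x;P_\alpha)=\vol{P_\alpha}$, $g_d^{-1}(1)=1$, and both branches give $F^{-1}(1-\alpha)$; by the continuity statements just established this ``stitching'' is stable under the small perturbations above, so points near the interface contribute an error that is $o_{\PP}(1)$ uniformly. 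Concretely, on the event that $\sup_{x\in K_n}|\HD(x;P_n)-\HD(x;P)|$ is small, any $x$ whose branch differs between the sample and population versions has $\HD(x;P)$ within that small quantity of $\alpha$; for such $x$, both $M_\alpha(x;P_n)$ and $\dist(x,\mu)$ lie within a controlled distance of $F^{-1}(1-\alpha)$ by the local Lipschitz bounds, so the difference is small. I expect this branch-matching argument to be the main obstacle, since it is where the two a priori unrelated error sources (depth fluctuation and illumination fluctuation) must be glued; everything else is a routine assembly of the consistency inputs. Finally I would combine (a), (b) and the interface estimate, take $n\to\infty$, and conclude $\sup_{x\in K_n}|M_\alpha(x;P_n)-\dist(x,\mu)|\xrightarrow[n\to\infty]{\PP}0$.
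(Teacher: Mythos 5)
Your proposal is correct and uses the same core decomposition and inputs as the paper's proof: split the supremum by which branch of $M_\alpha$ is active; on the halfspace branch combine the uniform consistency of $\HD$ with the uniform convergence of $F_n^{-1}$ to $F^{-1}$ on the compact range $[1/2,1-\alpha]$ (the paper invokes P\'olya's lemma via \cite{Vandervaart1998} for precisely this); on the illumination branch combine Theorem~\ref{theorem:consistency}, the modulus of continuity of $g_d^{-1}$ from Lemma~\ref{lemma:g properties}, and the $\mathcal O(R_n)\cdot o_{\PP}(1/R_n)$ cancellation coming from \eqref{quantile rate}; and isolate the branch-mismatch shell $P_\alpha\setminus P_{n,\alpha}$. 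The one place you diverge is in how you handle that shell. The paper exploits monotonicity of $M_\alpha(\cdot;P_n)$ along rays emanating from $\partial P_{n,\alpha}$ to sandwich it between $\inf_{\partial P_{n,\alpha}} M_\alpha$ (controlled by the halfspace-branch estimate) and $\sup_{\partial P_\alpha} M_\alpha$ (controlled by the illumination-branch estimate), thereby avoiding any direct estimate of $\Ill(x;P_{n,\alpha})$ for $x$ close to $\partial P_{n,\alpha}$. You instead argue that both $M_\alpha(x;P_n)$ and $\dist(x,\mu)$ must be near $F^{-1}(1-\alpha)$ on this thin shell; this also works, but it leaves implicit one step: that $\Ill(x;P_{n,\alpha})/\vol{P_{n,\alpha}}\to 1$ uniformly over $x\in P_\alpha\setminus P_{n,\alpha}$ as $\Haus(P_{n,\alpha},P_\alpha)\to 0$. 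To close that gap, note such $x$ lie within $\Haus(P_{n,\alpha},P_\alpha)$ of $P_{n,\alpha}$ and apply Lemma~\ref{lemma:Lipschitz} to $\co{P_{n,\alpha}\cup\{x\}}$ and $P_{n,\alpha}$. With that supplement your argument assembles into a complete proof.
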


The proof of Theorem~\ref{theorem:M consistency} is technical and can be found in the appendix.

\subsection{Estimation of the halfspace depth}

The close relation of the illumination depth with the Mahalanobis distance will now be used in a robustified definition of the sample halfspace depth based on the idea of illumination. Recall the connections of the halfspace depth with the Mahalanobis distance from \eqref{elliptical depth}. For $\alpha \in [0,1/2)$ we propose the following estimator of the depth $\HD\left(\cdot;P\right)$ of $P = EC(\mu,\Sigma,F)$
	\begin{equation}	\label{refined depth}
		  \RHD\left(x;P_n\right) = \begin{cases}
														\HD\left(x;P_n\right) & \mbox{if }\HD\left(x;P_n\right) \geq \alpha, \\
														F_n\left(g_d^{-1}\left(\frac{\Ill\left(x;P_{n,\alpha}\right)}{\vol{P_{n,\alpha}}}\right) F_n^{-1}\left(\alpha\right)\right) & \mbox{otherwise}.
														\end{cases}	
	\end{equation}
As for the illumination, there are several natural choices of the cut-off $\alpha$. Our main focus is in the robust estimation of the halfspace depth. Thus, we consider mainly constant $\alpha$. In spaces of lower dimensions, $\alpha = 1/3$ guarantees decent stability in combination with affine invariance, and superb robustness properties; details analogous to Theorem~\ref{theorem:M properties} are omitted.
	
\begin{theorem}	\label{theorem:F consistency}
Under the assumptions of Theorem~\ref{theorem:M consistency}
	\[	\sup_{x \in K_n} \left\vert \RHD(x;P_n) - \HD\left(x;P\right) \right\vert \xrightarrow[n\to\infty]{\PP}0.	\]
\end{theorem}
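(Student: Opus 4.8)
The plan is to derive Theorem~\ref{theorem:F consistency} as a corollary of Theorem~\ref{theorem:M consistency} together with the explicit elliptical formula \eqref{elliptical depth}, namely $\HD(x;P) = F(-\dist(x,\mu))$. The key observation is that $\RHD(x;P_n)$ and $\HD(x;P)$ can both be written as (approximately) $F(-M)$ for suitable arguments $M$: the population depth is exactly $F(-\dist(x,\mu))$, and the estimator is, in the low-depth regime, $F_n(-M_\alpha(x;P_n))$ up to the discrepancy between $F_n$ and $F$, since $F_n\bigl(g_d^{-1}(\Ill(x;P_{n,\alpha})/\vol{P_{n,\alpha}})\,F_n^{-1}(\alpha)\bigr)$ is $F_n$ applied to minus the quantity $F_n^{-1}(1-\alpha)\,g_d^{-1}(\cdot)$ that defines $M_\alpha$ (using symmetry $F_n^{-1}(\alpha) = -F_n^{-1}(1-\alpha)$). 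In the high-depth regime, $\RHD(x;P_n) = \HD(x;P_n)$, and here one falls back on the classical uniform strong consistency of the sample halfspace depth.

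First I would split $K_n$ according to the threshold. For the part of $K_n$ where $\HD(x;P)$ is bounded away from $\alpha$ from above --- equivalently, where $\dist(x,\mu)$ is bounded away from $F^{-1}(1-\alpha)$ --- the continuity of $\dist(\cdot,\mu)$ and the uniform consistency of $\HD(\cdot;P_n)$ (\cite{Donoho_Gasko1992}, and in the locally-uniform-over-growing-balls form this is exactly the halfspace-depth input already invoked for Theorem~\ref{theorem:consistency}) show that with probability tending to one, $\HD(x;P_n) \geq \alpha$ there, so $\RHD(x;P_n) = \HD(x;P_n)$, and uniform consistency of $\HD$ finishes this region. Symmetrically, on the part where $\HD(x;P) < \alpha$ strictly (i.e. $\dist(x,\mu) > F^{-1}(1-\alpha)$, bounded away), eventually $\HD(x;P_n) < \alpha$ and the second branch of \eqref{refined depth} is active; there I would write
	\[
	\RHD(x;P_n) = F_n\bigl(-M_\alpha(x;P_n)\bigr),\qquad \HD(x;P) = F\bigl(-\dist(x,\mu)\bigr),
	\]
and bound the difference by $\sup_t|F_n(t)-F(t)|$ plus a term controlled by the modulus of continuity of $F$ applied to $|M_\alpha(x;P_n)-\dist(x,\mu)|$. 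The first term vanishes by \eqref{F consistency}; the second vanishes uniformly on $K_n$ by Theorem~\ref{theorem:M consistency} once one knows $F$ is uniformly continuous on the relevant range. For the thin transitional zone $\dist(x,\mu)$ near $F^{-1}(1-\alpha)$, one uses that near this value $F$ is continuous and that $M_\alpha$ is consistent, so both $\RHD$ and $\HD(\cdot;P)$ are within $o_\PP(1)$ of the common value $\alpha = F(-F^{-1}(1-\alpha))$; the two branches of $\RHD$ agree in the limit at the threshold, so no jump discontinuity can be exploited.

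The main obstacle is the behaviour precisely at the threshold $\HD(x;P)=\alpha$, where the estimator switches branches: one must verify that the two expressions in \eqref{refined depth} match asymptotically, which rests on $g_d^{-1}(1)=1$ (from Lemma~\ref{lemma:g properties}, since a point on $\partial P_{n,\alpha}$ has illumination ratio $1$) giving $F_n(F_n^{-1}(\alpha))=\alpha$, and on ruling out that the finite-sample threshold $\HD(x;P_n)=\alpha$ occurs on a set where $F$ is flat. The hypothesis that $F$ is strictly increasing on $[0,F^{-1}(1-\alpha)]$ with a density bounded below near $F^{-1}(1-\alpha)$ is exactly what prevents such degeneracy, so this is a matter of assembling the pieces carefully rather than a genuinely new difficulty. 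A secondary technical point is that $K_n$ grows, so all the convergences must be taken in the locally-uniform-over-$\B(\mu,R_n)$ sense with $R_n=o(n^{1/(2(d-1))})$; but every ingredient --- Theorem~\ref{theorem:M consistency}, the rate in \eqref{quantile rate}, and the uniform consistency of $\HD$ over growing balls --- is already stated in that form, so the bookkeeping goes through.
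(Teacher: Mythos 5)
Your proposal is correct and takes essentially the same route as the paper: both reduce to writing $\RHD(x;P_n)=F_n(-M_\alpha(x;P_n))$ on the low-depth branch, bounding $\left\vert \RHD(x;P_n)-\HD(x;P)\right\vert$ by $\sup_t\left\vert F_n(t)-F(t)\right\vert$ plus the modulus of continuity of $F$ applied to $\left\vert M_\alpha(x;P_n)-\dist(x,\mu)\right\vert$, and invoking uniform consistency of the sample halfspace depth on the high-depth branch. The decompositions differ only cosmetically (you buffer the threshold $\HD(\cdot;P)=\alpha$ by an $\epsilon$-band and close the transitional zone by continuity, while the paper splits via $P_{n,\alpha}$ and $P_\alpha$ and closes the thin set $P_\alpha\setminus P_{n,\alpha}$ by a monotonicity squeeze using \eqref{Dyckerhoff}), and the paper re-uses the internal rate bounds from the proof of Theorem~\ref{theorem:M consistency} rather than invoking that theorem as a black box, but the substance is identical.
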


Theorem~\ref{theorem:F consistency} asserts that the robustified depth is a uniformly consistent estimator of the true halfspace depth. Following \cite{Einmahl_etal2015, He_Einmahl2017}, for an estimator of $F_n$ that performs well also in the tails of the distribution, we are able to derive a multiplicative version of the uniform consistency result. By \cite[Remark~1]{Einmahl_etal2015}, this result is much stronger than Theorem~\ref{theorem:F consistency}, and is valuable especially when extreme depth-regions are to be estimated.

\begin{theorem}	\label{theorem:F multiplicative consistency}
Suppose that the assumptions of Theorem~\ref{theorem:M consistency} are satisfied. In addition, let
	\begin{equation}	\label{quantile rate2}
	F_n^{-1}\left(1 - \alpha\right) - F^{-1}\left(1 - \alpha\right) = \mathcal O_{\PP}\left(\xi_n\right) = o_P\left(1/R_n\right),	
	\end{equation}
for a sequence $\xi_n$, and let
	\begin{equation}	\label{F multiplicative consistency}
	\sup_{\left\vert t \right\vert < 2 R_n/\sqrt{\lambda}} \left\vert \frac{F_n(t)}{F(t)} - 1 \right\vert \xrightarrow[n\to\infty]{\PP}0	
	\end{equation}
hold true for $\lambda$ the smallest eigenvalue of $\Sigma$. For $b = F^{-1}\left(1 - \alpha\right)$ and any $c > 0$ let
	\begin{equation}	\label{F multiplicative continuity}
	\lim_{n\to\infty} \sup_{\substack{b/2 < \left\vert s \right\vert < R_n/\sqrt{\lambda} \\ \left\vert s - t \right\vert < c\, \max\left\{ R_n \xi_n, \omega_n\right\}}} \left\vert \frac{F(s)}{F(t)} - 1 \right\vert = 0
	\end{equation}
with $\omega_n = \left(R_n^{d-1}/\sqrt{n}\right)^{2/(d+1)}$. Then 
	\[	\sup_{ x \in K_n} \left\vert \frac{\RHD\left(x;P_n\right)}{\HD\left(x;P\right)} - 1 \right\vert \xrightarrow[n\to\infty]{\PP} 0.	\]
\end{theorem}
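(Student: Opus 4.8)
The plan is to derive the multiplicative consistency by combining the additive control from Theorem~\ref{theorem:M consistency} with the new multiplicative hypotheses \eqref{F multiplicative consistency}--\eqref{F multiplicative continuity}, treating separately the two branches of the definition \eqref{refined depth}. On the set $K_n^{\mathrm{in}} = \{x \in K_n \colon \HD(x;P_n) \geq \alpha\}$ the estimator coincides with $\HD(x;P_n)$; there one writes $\RHD(x;P_n)/\HD(x;P) = \HD(x;P_n)/\HD(x;P)$ and invokes \eqref{elliptical depth} to replace $\HD(x;P)$ by $F(-\dist(x,\mu))$, while $\HD(x;P_n)$ is sandwiched using the standard uniform rate for the sample halfspace depth of an elliptically symmetric distribution together with \eqref{F multiplicative consistency}. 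Here the argument $-\dist(x,\mu)$ ranges over an interval whose length is controlled by $R_n/\sqrt{\lambda}$, which is exactly the range appearing in \eqref{F multiplicative consistency}, so the ratio $F_n/F$ there is uniformly close to $1$; then \eqref{F multiplicative continuity} absorbs the perturbation of the argument coming from the $\mathcal O_{\PP}$-error in estimating the halfspace depth.

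On the complementary ``illumination branch'' $K_n^{\mathrm{out}} = \{x \in K_n \colon \HD(x;P_n) < \alpha\}$ one uses the identity \eqref{illumination for measure}: for the population measure, $g_d^{-1}\bigl(\Ill(x;P_\alpha)/\vol{P_\alpha}\bigr) = \dist(x,\mu)/F^{-1}(1-\alpha)$, so that $F\bigl(g_d^{-1}(\cdot) F^{-1}(\alpha)\bigr)$ reproduces exactly $F(-\dist(x,\mu)) = \HD(x;P)$ by symmetry of $F$. Hence the ratio $\RHD(x;P_n)/\HD(x;P)$ on this branch equals
\[
\frac{F_n\!\left(g_d^{-1}\!\left(\tfrac{\Ill(x;P_{n,\alpha})}{\vol{P_{n,\alpha}}}\right) F_n^{-1}(\alpha)\right)}{F\!\left(g_d^{-1}\!\left(\tfrac{\Ill(x;P_\alpha)}{\vol{P_\alpha}}\right) F^{-1}(\alpha)\right)},
\]
which I would split multiplicatively into (a) $F_n/F$ evaluated at the empirical argument, controlled by \eqref{F multiplicative consistency}, and (b) the ratio of $F$ at the empirical argument to $F$ at the population argument, controlled by \eqref{F multiplicative continuity}. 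For (b) the key is to show that the two arguments differ by at most $c\,\max\{R_n\xi_n,\omega_n\}$: the perturbation of $g_d^{-1}(\cdot)$ comes from the $\mathcal O_{\PP}(R_n^{d-1}/\sqrt n)$ control of $\Ill(x;P_{n,\alpha})/\vol{P_{n,\alpha}}$ in Theorem~\ref{theorem:consistency}, which after applying the Lipschitz-type bound on $g_d^{-1}$ near the relevant value translates into an error of order $\omega_n = (R_n^{d-1}/\sqrt n)^{2/(d+1)}$ (the exponent $2/(d+1)$ being inherited from the behaviour of $g_d$ at $1$ recorded in Lemma~\ref{lemma:g properties}), while the perturbation of the multiplicative factor $F_n^{-1}(\alpha)$ versus $F^{-1}(\alpha)$ contributes an error of order $R_n\xi_n$ by \eqref{quantile rate2}. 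One must also check that both arguments stay in the window $b/2 < |s| < R_n/\sqrt\lambda$ demanded by \eqref{F multiplicative continuity}: the lower bound holds because on $K_n^{\mathrm{out}}$ we have $\dist(x,\mu) \geq F^{-1}(1-\alpha)\cdot 1 = b$ up to the small estimation error (points outside $P_\alpha$ have $g_d^{-1}\geq 1$), and the upper bound holds because $K_n \subset B(\mu,R_n)$ forces $\dist(x,\mu) \leq R_n/\sqrt\lambda$.

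Finally I would glue the two branches: the supremum over $K_n$ is the maximum of the suprema over $K_n^{\mathrm{in}}$ and $K_n^{\mathrm{out}}$, and on the overlap region (points with $\HD(x;P_n)$ near $\alpha$) the two expressions for $\RHD$ are mutually consistent up to the same order of error, since $F$ is continuous and strictly increasing near $F^{-1}(1-\alpha)$ with a density bounded below, so a point can cross the threshold only by an $o_\PP(1)$ amount in $\dist(x,\mu)$. The main obstacle I anticipate is part~(b) on the illumination branch: one has to carefully propagate the $R_n^{d-1}/\sqrt n$ additive error through the non-linear map $g_d^{-1}$ precisely near its singular point $t=1$, where $g_d'$ vanishes, to obtain the sharp $\omega_n$ rate rather than a cruder bound — this is where the quantitative form of Lemma~\ref{lemma:g properties} (equivalently, the sharp volume-difference estimate of Lemma~\ref{lemma:Lipschitz}) is indispensable, and where keeping track of uniformity over the expanding sets $K_n$ takes some care.
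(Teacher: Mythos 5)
Your proposal identifies the same core decomposition as the paper's proof on the genuine illumination branch $L_n^{II}=K_n\setminus(P_{n,\alpha}\cup P_\alpha)$: split the ratio multiplicatively into $F_n/F$ at the empirical argument (controlled by \eqref{F multiplicative consistency}) and $F$ at the empirical versus population argument (controlled by \eqref{F multiplicative continuity}), with the perturbation of the argument of order $\max\{R_n\xi_n,\omega_n\}$ obtained by pushing the $\mathcal O_{\PP}(R_n^{d-1}/\sqrt n)$ illumination error through $g_d^{-1}$ via the modulus-of-continuity bound $w_{g_d^{-1}}(h)=\mathcal O(h^{2/(d+1)})$ from Lemma~\ref{lemma:g properties}, plus the quantile error $\xi_n$ scaled by $|a_x|=\mathcal O(R_n)$. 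You also correctly check the window $b/2<|s|<R_n/\sqrt\lambda$ needed for \eqref{F multiplicative consistency}--\eqref{F multiplicative continuity}. That part matches the paper's argument.

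Two places where you overshoot or leave a gap. First, on the inner branch $\{\HD(x;P_n)\geq\alpha\}$ you do not need the multiplicative hypotheses at all: since both $\HD(x;P_n)$ and (for $n$ large, by uniform consistency) $\HD(x;P)$ are bounded below by a fixed multiple of $\alpha$, the ratio is controlled by the additive error $\sup_x|\HD(x;P_n)-\HD(x;P)|$ divided by $\alpha$, which vanishes almost surely by the standard result. Second, and more importantly, your illumination-branch argument uses the identity $F\!\left(g_d^{-1}\!\bigl(\Ill(x;P_\alpha)/\vol{P_\alpha}\bigr)F^{-1}(\alpha)\right)=\HD(x;P)$, which by \eqref{illumination for measure} holds only for $x\notin P_\alpha$. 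On the transition set $L_n^{III}=K_n\cap(P_\alpha\setminus P_{n,\alpha})$ the estimator $\RHD$ uses the illumination formula while the population argument lies inside $P_\alpha$ (where the illumination is trivial), so your multiplicative split does not apply there. You acknowledge this overlap region but only gesture that ``the two expressions are mutually consistent up to the same order of error.'' A clean argument is needed: on $L_n^{III}$ one has $\HD(x;P)\geq\alpha$, so one may again convert multiplicative to additive, writing $|\RHD/\HD-1|\leq\alpha^{-1}\sup_{K_n}|\RHD-\HD|$, and invoke the additive uniform consistency of Theorem~\ref{theorem:F consistency}. This is the step missing from your sketch; without it the gluing is incomplete, because the population argument fed to $F$ on $L_n^{III}$ does not equal $-\dist(x,\mu)$ and the window requirement $|s|>b/2$ in \eqref{F multiplicative continuity} may fail there.
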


We conclude this section by giving several remarks on the assumptions of Theorems~\ref{theorem:M consistency}, \ref{theorem:F consistency}, and \ref{theorem:F multiplicative consistency}. 

	\begin{enumerate}[label=(R\arabic*), ref=(R\arabic*), start=5]
	\item \label{R5} The conditions on the convergence rates of the estimator of the quantile \eqref{quantile rate} and \eqref{quantile rate2} are not restrictive at all. Suppose, for instance, that the parametric convergence rate $F_n^{-1}\left(1 - \alpha\right) - F^{-1}\left(1 - \alpha\right) = \mathcal O_{\PP}\left(n^{-1/2}\right)$ is true. Then, for $d>1$, both conditions \eqref{quantile rate} and \eqref{quantile rate2} are satisfied provided already that $R_n = o\left(n^{1/(2(d-1))}\right)$, as required for the consistency of the illumination.
	\item Condition \eqref{F multiplicative consistency} is also not too stringent. It is satisfied by the refined estimator of the univariate distribution function $F$ studied in \cite[Section~2.1]{Einmahl_etal2015}. There, based on the extreme value theory, an estimator $F_n$ is constructed that, under appropriate assumptions, obeys
	\[	\sup_{\left\vert t \right\vert < F^{-1}\left(1 - \delta_n\right)} \left\vert \frac{F_n(t)}{F(t)} - 1 \right\vert \xrightarrow[n\to\infty]{\PP}0	\]
for an adequate sequence $\delta_n \to 0$ as $n\to\infty$. Given that the sequence $R_n$ in \eqref{F multiplicative consistency} does not grow too fast, i.e. that $R_n \leq \sqrt{\lambda} F^{-1}\left( 1 - \delta_n \right)/2$, we have that $\left\{ t \in \R \colon \left\vert t \right\vert < 2 R_n/\sqrt{\lambda} \right\} \subset \left\{ t \in \R \colon \left\vert t \right\vert < F^{-1}\left(1 - \delta_n\right) \right\}$,
and \eqref{F multiplicative consistency} is valid for the estimator of $F$ from \cite{Einmahl_etal2015}. 
	\item \label{R7} Condition \eqref{F multiplicative continuity} is valid for $R_n$ that increases slowly enough. Suppose, for instance, as in Remark~\ref{R5} that $\xi_n = n^{-1/2}$ in \eqref{quantile rate2}. Then $\max\left\{ R_n \xi_n, \omega_n \right\}$ from \eqref{F multiplicative continuity} reduces to $\omega_n$, and we may bound, with $w_F$ the minimal modulus of continuity of $F$,
	\[	\sup_{\substack{b/2 < \left\vert s \right\vert < R_n/\sqrt{\lambda} \\ \left\vert s - t \right\vert < c\, \omega_n}} \left\vert \frac{F(s)}{F(t)} - 1 \right\vert \leq \sup_{\substack{\left\vert s \right\vert < R_n/\sqrt{\lambda} \\ \left\vert s - t \right\vert < c\, \omega_n}} \frac{\left\vert F(s) - F(t) \right\vert}{F(t)} \leq \frac{w_F\left(c\, \omega_n\right)}{F\left(-R_n/\sqrt{\lambda}\right)}.	\]
If $F$ has a density bounded from above by a constant $M>0$, the mean value theorem gives $w_F\left(h\right) = \sup_{\left\vert s - t\right\vert < h} \left\vert F(s) - F(t) \right\vert \leq M h$. Therefore, for $d>1$, for \eqref{F multiplicative continuity} to be true it is enough that $\omega_n = \left(R_n^{d-1}/\sqrt{n}\right)^{2/(d+1)} = o\left(F\left(-R_n/\sqrt{\lambda}\right)\right)$. If $F(t)$ does not decrease with $t \to -\infty$ at a rate faster than $\left\vert t \right\vert^\gamma$ for some $\gamma<0$, it is not difficult to see that $R_n = o\left( n^{(2(d-1) - \gamma(d+1))^{-1}} \right)$ guarantees \eqref{F multiplicative continuity}. For $F$ with a lighter tail, polynomial rates of $R_n$ may not be sufficiently slow. For $F(t)$ not decreasing with $t\to-\infty$ at a rate faster than $e^{-\left\vert t \right\vert^\gamma}$ for $\gamma>0$, we need to take $R_n$ increasing slower than $\left( \left(\frac{1}{d+1} - \varepsilon\right) \log(n) \right)^{1/\gamma} \sqrt{\lambda}$ for some $\varepsilon > 0$ to get \eqref{F multiplicative continuity}. Likewise, for $F(t)$ not decreasing faster than $\exp\left(-e^{\left\vert t \right\vert^\gamma}\right)$ with $t \to -\infty$ and $\gamma>0$, we take $R_n$ slower than $\left( \log\left( \frac{1}{d+1} - \varepsilon\right) + \log \log n \right)^{1/\gamma} \sqrt{\lambda}$ for any $\varepsilon > 0$ small enough. Note, however, that these estimates are rough, and for particular distribution functions $F$ finer rates of $R_n$ can be deduced directly from \eqref{F multiplicative continuity}. Detailed proofs of these results can be found in Appendix~\ref{appendix:proofs}.
	\end{enumerate}
	
%
% applications
%

\section{Applications}	\label{section:applications}

\subsection{Tie-breaking} \label{section:tie-breaking}

For an illustration of the tie-breaking capability of the illumination we consider a sample of $n=1000$ observations from the spherically symmetric standard five-dimensional normal distribution $P$ ($d=5$). The correct ranking $R_c$ of the observations from the most central to the most extreme is given by the decreasing values of the (population) density of $P$. We compare it to the ranking $R_{\HD}$ based on the decreasing values of the sample halfspace depth of the observations, where many ties occur, and the improved ranking $R_{\Ill}$ where the ties are resolved based on the value of $\Ill(x;P_{n,\alpha_n(x)})$, larger values corresponding to more extreme observations, see Section~\ref{section:elliptical distributions}. The cut-off $\alpha_n(x)$ is chosen so that $P_{n,\alpha_n(x)}$ contains one half of observations whose halfspace depth is not smaller than that of $x$.

In Table~\ref{tab:tiebreaking} we report the means and standard deviations of the estimated (Spearman) correlation coefficient between $R_{\HD}, R_c$ and $R_{\Ill}, R_c$, respectively, for observations with $\HD(x;P_n) \leq \delta$ for several values of $\delta$. The reported results are based on $100$ replications of the experiment.

From the $1000$ observations, on average $107$ lies on the boundary of the convex hull of the data and the halfspace depth alone cannot rank them properly. The refined ranking based on illumination is quite successful in this case, see the last row of Table~\ref{tab:tiebreaking} and Figure~\ref{fig:ties}.

\begin{table}[htpb]
\begin{tabular}{l|c|c|c}
                   & observations & $\mathrm{cor}_S(R_{\HD},R_c)$ & $\mathrm{cor}_S(R_{\Ill},R_c)$ \\ \hline
  $\delta = 0.5$   & 1000 {\footnotesize (0.)} & 0.989 {\footnotesize (0.002)} & 0.991 {\footnotesize (0.002)} \\
	$\delta = 0.05$  & 733 {\footnotesize (11.)}  & 0.975 {\footnotesize (0.004)} & 0.981 {\footnotesize (0.004)} \\
	$\delta = 0.01$  & 363 {\footnotesize (12.)}  & 0.895 {\footnotesize (0.016)} & 0.933 {\footnotesize (0.012)} \\
	$\delta = 0.005$ & 253 {\footnotesize (11.)}  & 0.806 {\footnotesize (0.029)} & 0.905 {\footnotesize (0.017)} \\
	$\delta = 0.001$ & 107 {\footnotesize (10.)}  & ---   & 0.923 {\footnotesize (0.023)}
\end{tabular}
\caption{For different values of $\delta$ the table shows the mean and standard deviation (in brackets) of the number of observations (out of $1000$) with $\HD(x;P_n) \leq \delta$, and for these observations the mean and standard deviation (in brackets) of the estimated (Spearman) correlation coefficient between their correct ranking $R_c$ and the rankings based on the halfspace depth $R_{\HD}$ and the improved ranking $R_{\Ill}$ based on the illumination, respectively. The last row corresponds to the observations lying on the boundary of the convex hull of the data --- these all have the same halfspace depth, the same $R_{\HD}$ rank, and hence $\mathrm{cor}_S(R_{\HD},R_c)$ is not defined. Based on $100$ replications of the experiment.}
\label{tab:tiebreaking}
\end{table}

\begin{figure}[htpb]
\includegraphics[width=0.7\textwidth]{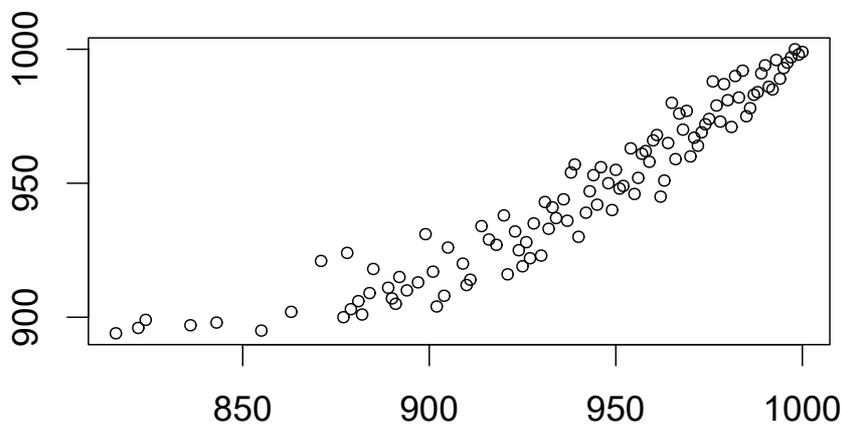}
\caption{Tie-breaking capability of illumination: improved ranks $R_{\Ill}$, based on illumination, of the observations lying on the boundary of the convex hull of the data (vertical axis) plotted against their correct ranks $R_{c}$, based on the values of the probability density function (horizontal axis). The halfspace depth of all these observations is zero.}
\label{fig:ties}
\end{figure}

\subsection{Estimation of extreme central regions}	\label{section:quantile estimation}

Consider the problem of estimation of the region $P_\delta$ for very small values of $\delta$, based on the sample of size $n$ from distribution $P$. The approach described in \cite{Einmahl_etal2015} is based on the so-called refined halfspace depth and finding the set $S=P_{n,k/n}$ for an appropriate value of $k \in \N$. The region $P_\delta$ is then estimated by the inflated set $E_R = c \,S = \left\{ c\,x \colon x \in S \right\}$, where $c=\left( \frac{k}{n \delta} \right)^{1/ \hat{\alpha}}$, and $\hat{\alpha}$ is the estimated tail index of $P$. Note the implicit assumptions of homothety of the depth contours, and that of the halfspace median of $P$ being the origin.

Assume now that $S=P_{n,k/n}$ is an ellipsoid given by $\left\{ x \in \R^d \colon \dist\left(x,0\right)\leq 1\right\}$ (which is a relevant approximation for elliptical distributions). Let $x$ be a point on the boundary of $S$ and $x^*= c \, x$ a point on the boundary of $c \, S$. It holds  that $\dist(x^*,0) = c \, \dist(x,0) = c$ and using \eqref{MD and illumination} we get $g_d(c) = g_d(\dist(x^*,0)) = \Ill(x^*;S)/\vol{S}$.

Instead of the inflation of the set $S$, our approach is based on finding the set $E_{\Ill} = \{ x \in \mathbb{R}^d: \Ill(x;S) \leq g_d(c) \}$. This is related to the approach from \cite{Einmahl_etal2015} but more robust in the sense that our procedure is less sensitive to errors in estimation of $S=P_{n,k/n}$ and the tail index $\alpha$. Figure~\ref{fig:quantile region} shows the estimates of the central region $P_{1/n}$ based on a sample of size $n=500$ from the spherically symmetric bivariate Cauchy distribution where, in agreement with \cite{Einmahl_etal2015}, we take $k=75$ for finding $S=P_{n,k/n}$ and the tail index $\alpha$.

We repeated the experiment 100 times and computed the two Hausdorff distances $\Haus(E_{\Ill},P_{1/n})$ and $\Haus(E_R,P_{1/n})$, respectively. The boxplots of these distances are given in Figure~\ref{fig:quantile boxplot}. We remark that in all 100 replications of the experiment we observed $\Haus(E_{\Ill},P_{1/n}) < \Haus(E_R,P_{1/n})$, making the illumination-based approach more successful in the estimation of $P_{1/n}$. This appears to be justified by a more spherical shape of $E_{\Ill}$, obtained by the illumination of $S$, compared to $E_R$, obtained by the inflation of $S$.

\begin{figure}[htpb]
\includegraphics[width=0.7\textwidth]{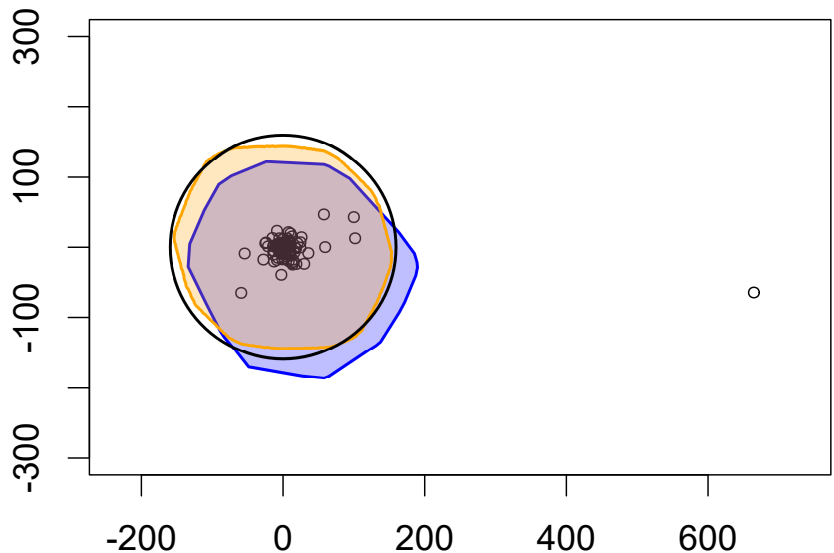}
\caption{Estimation of an extreme central region. Based on a sample of size $n=500$ from the spherically symmetric bivariate Cauchy distribution (note that a single observation with coordinates $(-1677, -1691)\tr$ is not plotted). Black circle: boundary of the true region $P_{1/n}$; light blue region: estimate $E_R$ of $P_{1/n}$ based on the refined depth \cite{Einmahl_etal2015}; light orange region: estimate $E_\Ill$ of $P_{1/n}$ based on the illumination.}
\label{fig:quantile region}
\end{figure}

\begin{figure}[htpb]
\includegraphics[width=0.7\textwidth]{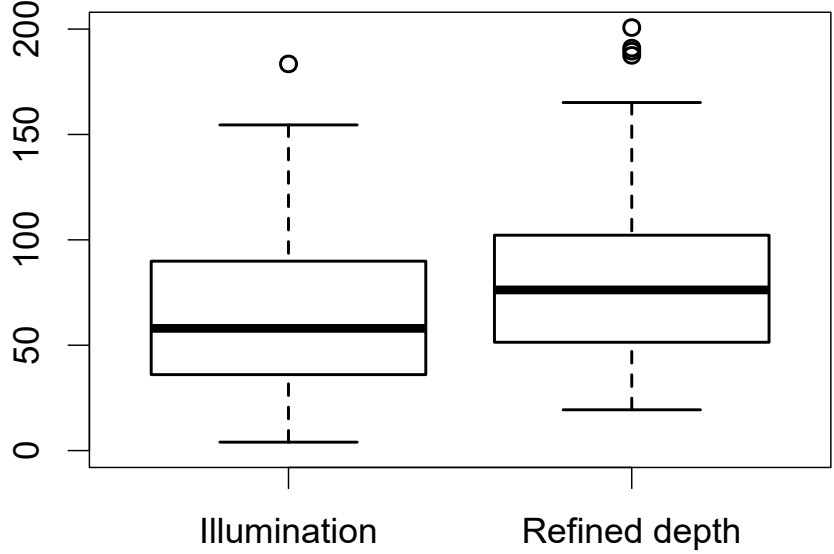}
\caption{Estimation of the extreme central region. Hausdorff distances of the estimates $E_{\Ill}$ (based on illumination) and $E_R$ (based on the refined depth \cite{Einmahl_etal2015}) from $P_{1/n}$.}
\label{fig:quantile boxplot}
\end{figure}

\subsection{Robust classification} % \label{section:classification}

Illumination can be used to devise a robust and affine invariant version of the quadratic discriminant analysis (QDA) classification rule, whose population version is optimal. Suppose, for simplicity, that we have two independent $d$-variate random samples from normal distributions $P^{(j)}$ with unknown mean vectors $\mu_j$ and unknown variance matrices $\Sigma_j$ for $j=1,2$, respectively. A new observation $x$ is sampled from $P^{(j)}$ with a known probability $\pi_j$, $\pi_1 + \pi_2 = 1$. Our task is to determine from which distribution $x$ was sampled. For the illumination-based QDA, let $0 < \delta < 1/2$ be a fixed parameter. We suggest to assign $x$ into $P^{(1)}$ if and only if
	\begin{equation}	\label{classification rule}
	2 \log\left(\frac{\pi_1}{\vol{P_\delta^{(1)}}}\right) - \dist[\Sigma_1]\left(x,\mu_1\right)^2 > 2 \log\left(\frac{\pi_2}{\vol{P_\delta^{(2)}}}\right) - \dist[\Sigma_2]\left(x,\mu_2\right)^2.	
	\end{equation}
	%\begin{enumerate}
	%\item Calculate the central regions $P_{\delta}^{(j)}$ for $j=1,2$;
	%\item Evaluate the robust Mahalanobis scores $s_j = g_d^{-1}\left(\Ill_j^\ast\right) \Phi^{-1}\left( 1 - \delta \right)$, where $\Phi$ is the distribution function of the univariate standard normal distribution;
	%\item Set $v_j = \vol{P_{\delta}^{(j)}}/\vol{\B}$;
	%\item Classify $x$ to $P^{(j)}$ if $s_i^2 + 2 \log\left( v_i \right) - 2 \log\left(\pi_i\right)$ is minimal at index $j$.
	%\end{enumerate}
In the population case, this simple classification rule is equivalent with the QDA, i.e. it is optimal in our setting. At the same time, it is affine invariant, highly robust for $\delta$ large enough, and entirely depth-based, as we saw in Section~\ref{section:Mahalanobis distance} that $\dist[\Sigma_j](x,\mu_j)$ can be consistently estimated by $M_\delta\left(x;P_n^{(j)}\right)$ with $F_n = \Phi$ the distribution function of the univariate standard normal variable, and $\vol{P_{n,\delta}^{(j)}}$ almost surely approaches $\vol{P_{\delta}^{(j)}}$ as $n\to\infty$. The proof of the following result can be found in Appendix~\ref{appendix:proofs}.
	
\begin{theorem}	\label{theorem:QDA}
For $P^{(j)}$ as above the illumination-based QDA classification rule \eqref{classification rule} is optimal, i.e. for any $\delta \in (0,1/2)$ it coincides with the classical quadratic discriminant rule. Furthermore, for any $K \subset \R^d$ bounded and $j=1,2$
	\[	\sup_{x \in K} \left\vert 2 \log\left(\frac{\pi_j}{\vol{P_{n,\delta}^{(j)}}}\right) - M_\delta\left(x;P_n^{(j)}\right)^2 - \left( 2 \log\left(\frac{\pi_j}{\vol{P_\delta^{(j)}}}\right) - \dist[\Sigma_j]\left(x,\mu_j\right)^2 \right) \right\vert \xrightarrow[n\to\infty]{\PP} 0,	\]
where $\Phi$ is used in place of $F_n$ in $M_\delta\left(x;P_n^{(j)}\right)$.
\end{theorem}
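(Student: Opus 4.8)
The plan is to split the statement into the two assertions: (a) that the population rule \eqref{classification rule} coincides with the classical QDA rule, and (b) the uniform stochastic convergence of the empirical version of the discriminant function on bounded sets. For part (a), recall that the Bayes-optimal rule for two Gaussian populations assigns $x$ to $P^{(1)}$ iff $\pi_1 f_1(x) > \pi_2 f_2(x)$, where $f_j$ is the $N(\mu_j,\Sigma_j)$ density. Taking $2\log$ of both sides and writing $f_j(x) = (2\pi)^{-d/2}|\Sigma_j|^{-1/2}\exp(-\tfrac12 \dist[\Sigma_j](x,\mu_j)^2)$, the rule becomes $2\log\pi_1 - \log|\Sigma_1| - \dist[\Sigma_1](x,\mu_1)^2 > 2\log\pi_2 - \log|\Sigma_2| - \dist[\Sigma_2](x,\mu_2)^2$. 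So I only need to check that replacing $-\log|\Sigma_j|$ by $-2\log\vol{P_\delta^{(j)}}$ changes both sides by the same additive constant (independent of $j$), hence leaves the inequality intact. This follows because for a Gaussian, $P_\delta^{(j)}$ is the ellipsoid $\{\dist[\Sigma_j](x,\mu_j) \le r_\delta\}$ with $r_\delta = \Phi^{-1}(1-\delta)$ depending only on $\delta$ (via \eqref{elliptical depth} and the normalization-free version of the ellipsoid description), so $\vol{P_\delta^{(j)}} = \kappa_d\, r_\delta^{d}\,|\Sigma_j|^{1/2}$ with $\kappa_d$ the volume of the unit ball; thus $-2\log\vol{P_\delta^{(j)}} = -\log|\Sigma_j| - 2d\log r_\delta - 2\log\kappa_d$, and the extra term $-2d\log r_\delta - 2\log\kappa_d$ is the same on both sides and cancels. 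This establishes (a).

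For part (b), the quantity inside the supremum is a difference of two terms, each involving $\log(\pi_j/\vol{P_{n,\delta}^{(j)}})$ versus $\log(\pi_j/\vol{P_\delta^{(j)}})$, and $M_\delta(x;P_n^{(j)})^2$ versus $\dist[\Sigma_j](x,\mu_j)^2$. I would handle these two pieces separately and use the triangle inequality. For the volume term: since $P^{(j)}$ is Gaussian, it satisfies the regularity assumptions of Theorem~\ref{theorem:consistency} (the density is continuous and positive at the boundary of $P_\delta^{(j)}$, the elliptical case), so $\vol{P_{n,\delta}^{(j)}} \to \vol{P_\delta^{(j)}} > 0$ in probability (indeed one gets it almost surely from the local uniform consistency of the central regions and continuity of volume under Hausdorff convergence); since $\vol{P_\delta^{(j)}} > 0$ and $\log$ is continuous there, $\log(\pi_j/\vol{P_{n,\delta}^{(j)}}) \xrightarrow{\PP} \log(\pi_j/\vol{P_\delta^{(j)}})$, and this term is constant in $x$, so the sup over $K$ is immaterial. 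For the Mahalanobis term: apply Theorem~\ref{theorem:M consistency} with $F_n = \Phi = F$ (so the quantile-rate condition \eqref{quantile rate} is trivially $0$, and $R_n$ can be taken constant since $K$ is bounded, i.e. $K \subset B(\mu_j, R)$ for fixed $R$), giving $\sup_{x\in K}|M_\delta(x;P_n^{(j)}) - \dist[\Sigma_j](x,\mu_j)| \xrightarrow{\PP} 0$; then square using the identity $a^2 - b^2 = (a-b)(a+b)$ together with a uniform bound on $a+b = M_\delta(x;P_n^{(j)}) + \dist[\Sigma_j](x,\mu_j)$ over the bounded set $K$ (the true distance is bounded on $K$, and $M_\delta$ is uniformly close to it with high probability, hence also bounded on $K$ on that event) to conclude $\sup_{x\in K}|M_\delta(x;P_n^{(j)})^2 - \dist[\Sigma_j](x,\mu_j)^2| \xrightarrow{\PP} 0$. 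Combining the two pieces via the triangle inequality, and then subtracting the $j=1$ and $j=2$ expressions, yields the claimed convergence.

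A small bookkeeping point worth making explicit: the normalization $|\Sigma| = 1$ used in Section~\ref{section:elliptical distributions} is a convention for identifiability of $F$, but \eqref{classification rule} is stated with the genuine covariances $\Sigma_j$; the affine-invariance identity \eqref{elliptical depth}, namely $\HD(x;P) = F(-\dist[\Sigma](x,\mu))$, and the ellipsoid description of $P_\delta$ hold for any positive-definite $\Sigma$ once $F$ is taken to be the (symmetric) distribution function of a univariate marginal of the whitened vector $\Sigma^{-1/2}(X-\mu)$ — for Gaussian $P^{(j)}$ this is exactly $F = \Phi$ regardless of $\Sigma_j$. So in part (b) one legitimately uses $F_n = \Phi$ for both populations, and in $M_\delta$ the factor $F_n^{-1}(1-\delta) = \Phi^{-1}(1-\delta) = r_\delta$ combined with $g_d^{-1}(\Ill(x;P_{n,\delta}^{(j)})/\vol{P_{n,\delta}^{(j)}})$ estimates $\dist[\Sigma_j](x,\mu_j)$ by \eqref{Mahalanobis distance population} and \eqref{illumination for measure}.

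I expect the main obstacle to be the squaring step in part (b): passing from uniform consistency of $M_\delta$ to uniform consistency of $M_\delta^2$ requires that $M_\delta(x;P_n^{(j)})$ be uniformly bounded over $x \in K$ on a high-probability event, which is not automatic from a bare $o_\PP$-type statement if one is careless — but it follows cleanly by working on the event $\{\sup_{x\in K}|M_\delta(x;P_n^{(j)}) - \dist[\Sigma_j](x,\mu_j)| \le 1\}$, whose probability tends to $1$, on which $M_\delta$ is bounded by $1 + \sup_{x\in K}\dist[\Sigma_j](x,\mu_j) < \infty$. Everything else is routine: the population equivalence in (a) is a one-line density computation modulo the volume formula for ellipsoids, and the volume-term convergence in (b) is an immediate consequence of Theorem~\ref{theorem:consistency} in the elliptical case plus continuity of $\log$.
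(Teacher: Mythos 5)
Your proof is correct and follows essentially the same route as the paper's: part (a) is the same one-line density computation, using $\vol{P_\delta^{(j)}} = \Phi^{-1}(1-\delta)^d \sqrt{|\Sigma_j|}\,\vol{B^d}$ so that replacing $-\log|\Sigma_j|$ with $-2\log\vol{P_\delta^{(j)}}$ introduces only a $j$-independent additive constant; and part (b) invokes Theorem~\ref{theorem:M consistency} with $F_n=\Phi$ for the Mahalanobis term and the Hausdorff convergence of depth regions (via Lemma~\ref{lemma:Lipschitz}) for the volume term, which are precisely the ingredients the paper cites. Your elaboration of the squaring step (working on the high-probability event where $M_\delta$ is uniformly close to $\dist[\Sigma_j]$, hence bounded on $K$) and the remark on identifiability versus the $|\Sigma|=1$ normalization are sound details that the paper leaves implicit.
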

	
Note that in Theorem~\ref{theorem:QDA} we deal with normal distributions. For other elliptically symmetric distributions $EC\left(\mu_j,\Sigma_j,F\right)$ analogous results are straightforward to derive; for details see Appendix~\ref{appendix:classification}.

To illustrate the performance of the robust QDA classification approach we consider two simulation experiments. Another two scenarios are given in Appendix~\ref{appendix:simulations}.

\subsubsection{Bivariate normal distribution, location and scale difference}\label{subsubsec:normal location scale}

Let $P_X$ be the standard bivariate normal distribution and denote $P^{(1)} = P_X, P^{(2)} = P_{2X+(4,4)\tr}$. The training sets consist of $500$ observations from $P^{(1)}$ and $P^{(2)}$, respectively; the testing sets consist of $1000$ points sampled from $P^{(1)}$ and $P^{(2)}$, respectively. We consider the illumination-based QDA procedure given above and compare it to the classical QDA method and the method based on the refined halfspace depth \cite{Einmahl_etal2015}.

For the illumination we choose $\delta$ so that the probability content of $P_{X,\delta}$ is $0.5$. This results in $\delta = 1 - \Phi(\sqrt{2 \log 2})$. For computing the refined depth we take $k=75$ in agreement with \cite{Einmahl_etal2015}.

The experiment was repeated $100$ times. Figure~\ref{fig:classification normal location scale} (left panel) shows boxplots of the misclassification rates for different methods. The illumination-based approach and the classical QDA on average achieve the optimal (Bayes) error rate while the misclassification rates of the method based on the refined depth tend to be slightly higher.

\begin{figure}[htpb]
\includegraphics[width=0.45\textwidth]{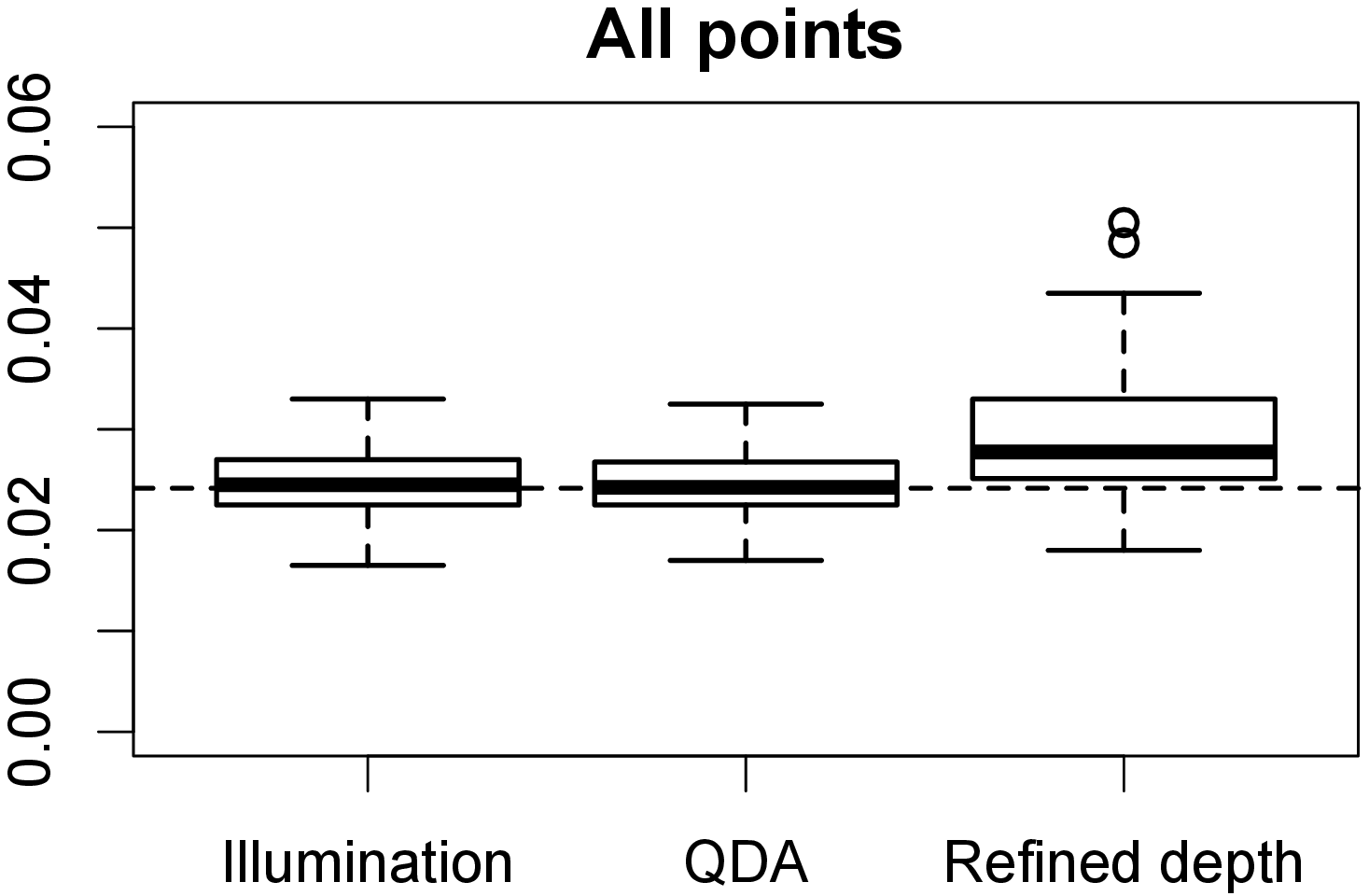}
\includegraphics[width=0.45\textwidth]{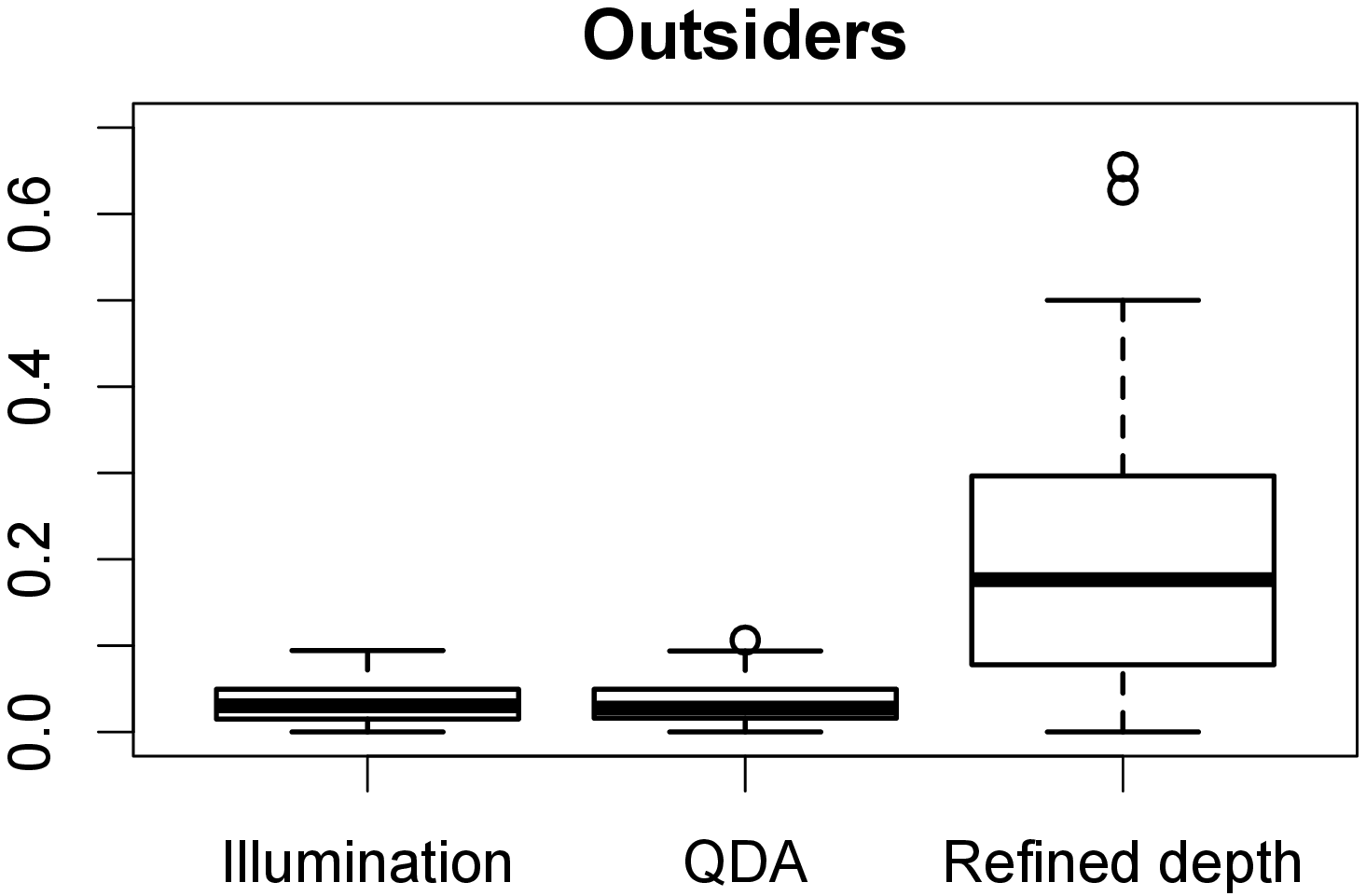}
\caption{Misclassification rates, based on 100 replications of the experiment with two bivariate normal distributions with different location and different scale. Based on all testing points (left panel) or the outsiders (right panel). Dashed horizontal line in the left panel corresponds to the theoretical Bayes error rate.}
\label{fig:classification normal location scale}
\end{figure}

To assess the performance of the classification methods in the extremes we consider another experiment where first $2500$ testing points are generated from each distribution, but only those outside the convex hull of both training sets are used for classification. This setup corresponds to the so-called outsider problem studied, among others, in \cite{Lange_etal2014}. The outsiders have all zero empirical halfspace depth w.r.t. both training sets, and hence cannot be classified based on $\HD$ only. Figure~\ref{fig:classification normal location scale} (right panel) shows boxplots of the misclassification rates of the considered methods for the outsiders. Both illumination-based approach and QDA still perform well. The method based on the refined depth suffers from much higher misclassification rates because of the incorrect estimation of the tail index.

\begin{table}[htpb]
\resizebox{\textwidth}{!}{
\begin{tabular}{c|c|c|c||c|c|c}
    & \multicolumn{3}{c||}{All points} & \multicolumn{3}{c}{Outsiders} \\ %\hline
    & Illumination & QDA & Ref. depth & Illumination & QDA & Ref. depth \\ \hline
	0 \%   & 0.025 {\footnotesize (0.003)} & 0.024 {\footnotesize (0.003)} & 0.029 {\footnotesize (0.006)} & 0.034 {\footnotesize (0.023)} & 0.033 {\footnotesize (0.024)} & 0.202 {\footnotesize (0.148)} \\
  1 \%   & 0.025 {\footnotesize (0.003)} & 0.050 {\footnotesize (0.005)} & 0.045 {\footnotesize (0.010)} & 0.035 {\footnotesize (0.029)} & 0.055 {\footnotesize (0.038)} & 0.506 {\footnotesize (0.155)} \\
	5 \%   & 0.026 {\footnotesize (0.003)} & 0.044 {\footnotesize (0.005)} & 0.052 {\footnotesize (0.011)} & 0.033 {\footnotesize (0.026)} & 0.081 {\footnotesize (0.052)} & 0.539 {\footnotesize (0.146)} \\
	10 \%  & 0.034 {\footnotesize (0.004)} & 0.047 {\footnotesize (0.008)} & 0.059 {\footnotesize (0.059)} & 0.036 {\footnotesize (0.029)} & 0.092 {\footnotesize (0.058)} & 0.541 {\footnotesize (0.139)} \\ \hline
\end{tabular}}
\caption{Average misclassification rates and their standard deviations (in brackets), bivariate normal distributions with different location and different scale, level of contamination in one of the training samples ranging from 0 to 10~\%. Based on 100 replications of the experiment and all testing points (left part) and the outsiders (right part), respectively.}
\label{tab:classificationNormalContaminationLocationScale}
\end{table}

To study the robustness properties of the classification methods we consider contamination of the first training set by observations from $P^{(3)} = P_{X+(40,40)\tr}$. We set the extent of contamination to 1~\%, 5~\% and 10~\% of the data points, respectively. Table~\ref{tab:classificationNormalContaminationLocationScale} gives the average misclassification rates in these settings. Note that in the right part of the table different rows are not directly comparable since higher contamination implies larger span of the training points, hence on average fewer (more outlying) test points lie outside the convex hull of the training points. We observe that the illumination-based approach is very robust and performs well even under rather severe contamination. In contrast, the refined depth \cite{Einmahl_etal2015} is sensitive to the contamination through the estimation of the tail index.

%%%%%%%%%%%%%%%%%%%%%%%%%%%%%%%%%%%%%%%%%%%%%%%%%%%%%%%%%%%%%%%%%%%%%%%%%%%%%%%%%%

\subsubsection{Bivariate elliptical distribution, location and scale difference}\label{subsubsec:elliptical location scale}

To study the classification performances for a heavy-tailed distribution, let $P_Y$ be the elliptical distribution from \cite{He_Einmahl2017} with the probability density function $f(x,y) = \frac{3(x^2/4+y^2)^2}{4\pi(1+(x^2/4+y^2)^3)^{3/2}}, (x,y)\tr \in \R^2$. Let $P^{(1)} = P_Y$, $P^{(2)} = P_{2Y+(4,4)\tr}$, $P^{(3)} = P_{X+(40,40)\tr}$ for $X$ from Section~\ref{subsubsec:normal location scale}. It is possible to adapt our robust QDA procedure by replacing $\Phi$ with the marginal distribution function $F$ of the first element of the spherically symmetric affine image of $Y$. The function $F$ is assumed to be known\footnote{To avoid lengthy numerical computations of multiple quantiles of $F$, here and in Section~\ref{subsubsec:elliptical location} in the appendix we slightly simplify the rule \eqref{classification rule}, and for $x \in P_{n,\delta}^{(1)} \cup P_{n,\delta}^{(2)}$ we assign $x$ to $P^{(1)}$ if and only if $\HD\left(x;P^{(1)}_n\right) > \HD\left(x;P^{(2)}_n\right)$.}. Here we choose $\delta \doteq 0.11205$ so that the probability content of $P_{Y,\delta}$ is $0.5$.

The results are summarized in Figure~\ref{fig:classification elliptical location scale} and Table~\ref{tab:classificationEllipticalContaminationLocationScale}. We observe that in the case with no contamination, the illumination-based approach performs the best from the three classification methods considered. Note that the very high misclassification rates for the method based on the refined depth and the group of outsiders under contamination (see the right part of Table~\ref{tab:classificationEllipticalContaminationLocationScale}), are caused by the poorly estimated tail index of the distribution $P^{(1)}$, and because approximately three times more testing points come from the distribution $P^{(2)}$ than from $P^{(1)}$ due to the greater spread of $P^{(2)}$.

\begin{figure}[tpb]
\includegraphics[width=0.45\textwidth]{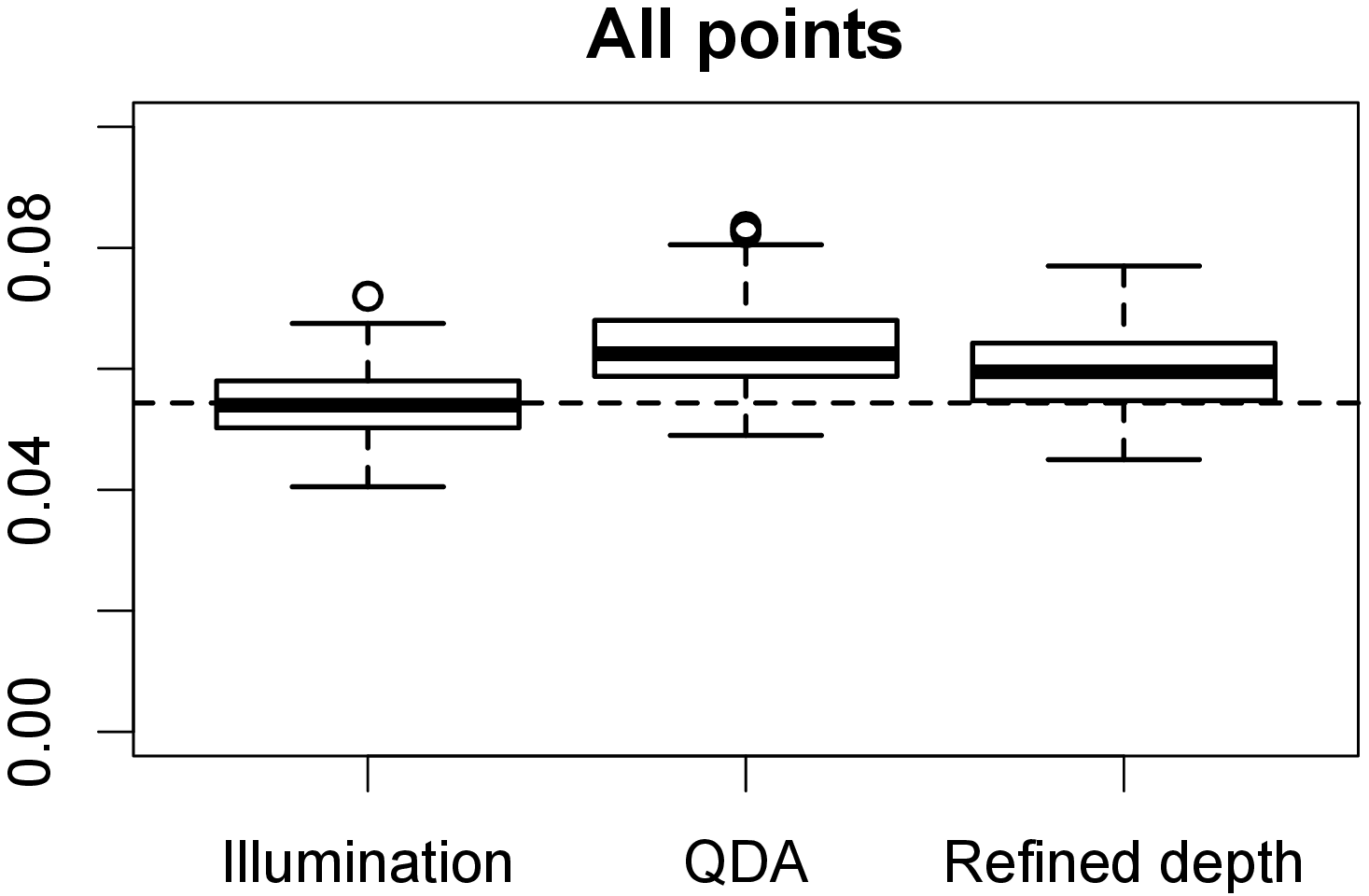}
\includegraphics[width=0.45\textwidth]{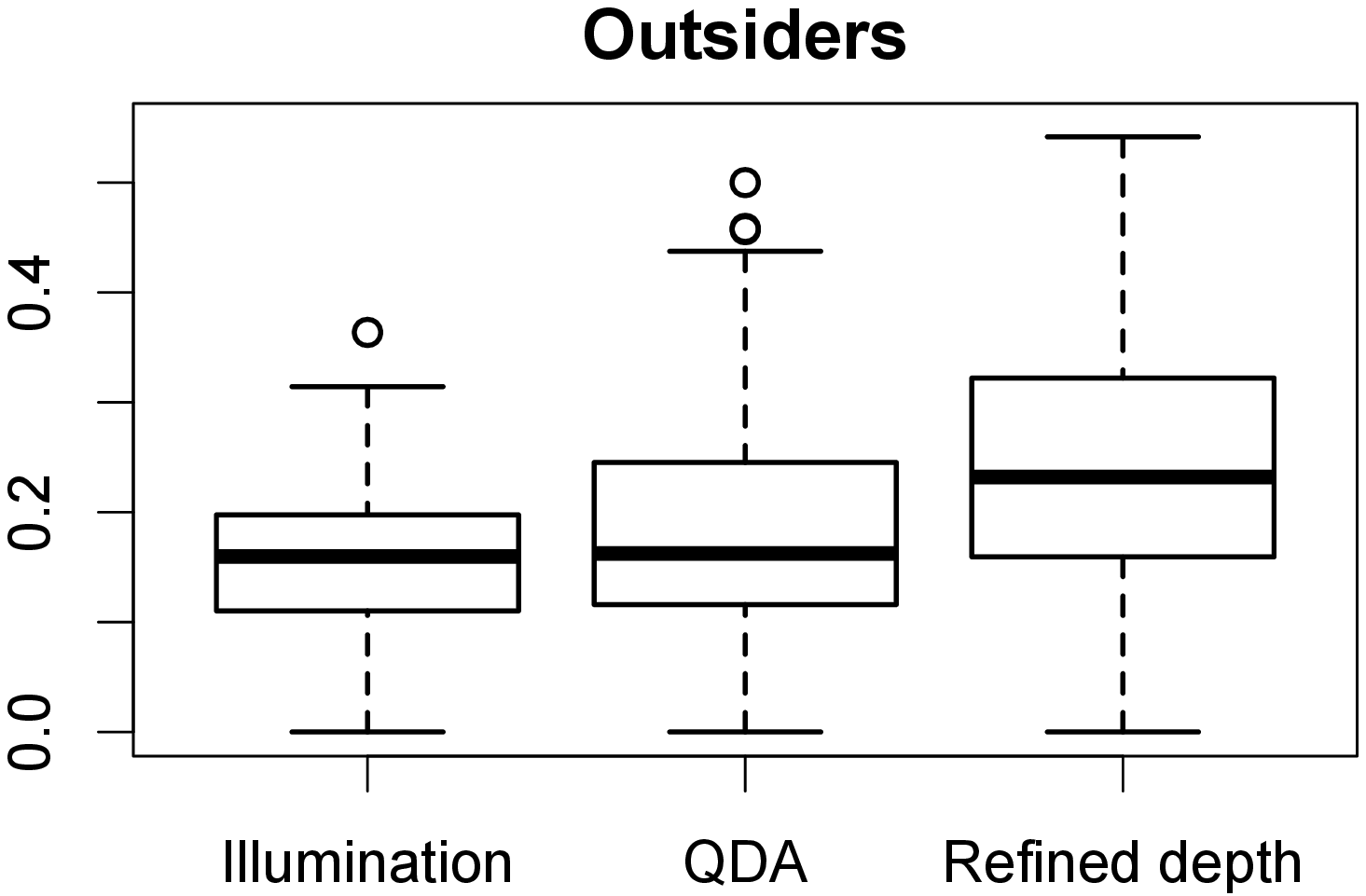}
\caption{Misclassification rates, based on $100$ replications of the experiment with two bivariate elliptical distributions with different location and different scale. Based on all testing points (left panel) or the outsiders (right panel). Dashed horizontal line in the left panel corresponds to the theoretical Bayes error rate.}
\label{fig:classification elliptical location scale}
\end{figure}

\begin{table}[tpb]
\resizebox{\textwidth}{!}{
\begin{tabular}{c|c|c|c||c|c|c}
    & \multicolumn{3}{c||}{All points} & \multicolumn{3}{c}{Outsiders} \\ %\hline
    & Illumination & QDA & Ref. depth & Illumination & QDA & Ref. depth \\ \hline
	0 \%   & 0.055 {\footnotesize (0.006)} & 0.064 {\footnotesize (0.007)} & 0.060 {\footnotesize (0.007)} & 0.155 {\footnotesize (0.071)} & 0.184 {\footnotesize (0.105)} & 0.240 {\footnotesize (0.118)} \\
  1 \%   & 0.056 {\footnotesize (0.006)} & 0.096 {\footnotesize (0.014)} & 0.066 {\footnotesize (0.008)} & 0.179 {\footnotesize (0.081)} & 0.238 {\footnotesize (0.094)} & 0.527 {\footnotesize (0.183)} \\
	5 \%   & 0.057 {\footnotesize (0.005)} & 0.103 {\footnotesize (0.029)} & 0.093 {\footnotesize (0.016)} & 0.185 {\footnotesize (0.090)} & 0.239 {\footnotesize (0.102)} & 0.755 {\footnotesize (0.129)} \\
	10 \%  & 0.068 {\footnotesize (0.008)} & 0.154 {\footnotesize (0.038)} & 0.108 {\footnotesize (0.015)} & 0.202 {\footnotesize (0.084)} & 0.266 {\footnotesize (0.098)} & 0.742 {\footnotesize (0.128)} \\ \hline
\end{tabular}}
\caption{Misclassification rates and their standard deviations (in brackets), bivariate elliptical distributions with different location and different scale, level of contamination in one of the training samples ranging from $0$ to $10~\%$. Based on $100$ replications of the experiment and all testing points (left part) and the outsiders (right part), respectively.}
\label{tab:classificationEllipticalContaminationLocationScale}
\end{table}

%%%%%%%%%%%%%%%%%%%%%%%%%%%%%%%%%%%%%%%%%%%%%%%%%%%%%%%%%%%%%%%%%%%%%%%%%%%%%%%%%%

Overall, our experiments demonstrate a great potential for varied illumination-based statistical methodology. All these results will be further elaborated on in the coming works of the authors.

%
%
% appendix
%
%

\appendix

\section{Proofs of the theoretical results} \label{appendix:proofs}

\subsection{Proof of Lemma~\ref{lemma:ellipsoid}}

For $d=1$, $\Sigma = \sigma^2 > 0$ and $\left\vert x - \mu \right\vert > \sigma$, the formula reduces to $\Ill\left(x;\Ell\right) = \left\vert x - \mu \right\vert + \sigma$, which is the illumination of $x$ outside $\Ell = \sigma \B[1]$ on that ball. For $d>1$, let us first compute the illumination of a unit ball. Take $x \notin \B$. The set difference of the convex hull of $x$ and $\B$ minus $\B$ is a cone with height $\left\Vert x \right\Vert - 1/\left\Vert x \right\Vert$ and base a $(d-1)$-dimensional ball with radius $\sqrt{1 - 1/\left\Vert x \right\Vert^2}$, without a spherical cap of $\B$ of height $1-1/\left\Vert x \right\Vert$. Because $\vol{\B} = \frac{\pi^{d/2}}{\Gamma\left(\frac{d}{2}+1\right)}$, the volume of the cone is 
	\begin{equation}	\label{cone volume}
	\frac{1}{d} \frac{\pi^{\frac{d-1}{2}}}{\Gamma\left(\frac{d-1}{2}+1\right)} \left( 1 - 1/\left\Vert x \right\Vert^2 \right)^{\frac{d-1}{2}} \left( \left\Vert x \right\Vert - \frac{1}{\left\Vert x \right\Vert} \right),	
	\end{equation}
and the volume of the cap is 
	\begin{equation}	\label{cap volume}
	\frac{\pi^{\frac{d-1}{2}}}{\Gamma\left(\frac{d+1}{2}\right)} \int_0^{\arccos\left(1/\left\Vert x \right\Vert\right)} \sin^d (t) \dd t.	
	\end{equation}
Altogether, \eqref{cone volume} and \eqref{cap volume} give that
	\[	\Ill\left(x;\B\right) =  \frac{\pi^{\frac{d-1}{2}}}{\Gamma\left(\frac{d+1}{2}\right)} \left( \frac{\left\Vert x \right\Vert}{d} \left( 1 - 1/\left\Vert x \right\Vert^2 \right)^{\frac{d+1}{2}} - \int_0^{\arccos\left(1/\left\Vert x \right\Vert\right)} \sin^d (t) \dd t \right) + \vol{\B}.	\]
It is not difficult to see that $\Ell = \Sigma^{1/2} \B + \mu = \bigcup_{x \in \B} \left\{ \Sigma^{1/2} x + \mu \right\}$. Thus, by the affine equivariance of the illumination bodies \citep[Proposition~2]{Werner2006} we have $\Ill\left(x;\Ell\right) = \Ill\left(\Sigma^{-1/2}\left(x - \mu\right);\B\right) \sqrt{\left\vert \Sigma \right\vert}$. The general assertion then follows from $\left\Vert \Sigma^{-1/2}\left(x - \mu\right) \right\Vert = \sqrt{\left(x - \mu\right)\tr\Sigma^{-1}\left(x-\mu\right)}$.

\begin{figure}[htpb]
\includegraphics[width=\twowidth\textwidth]{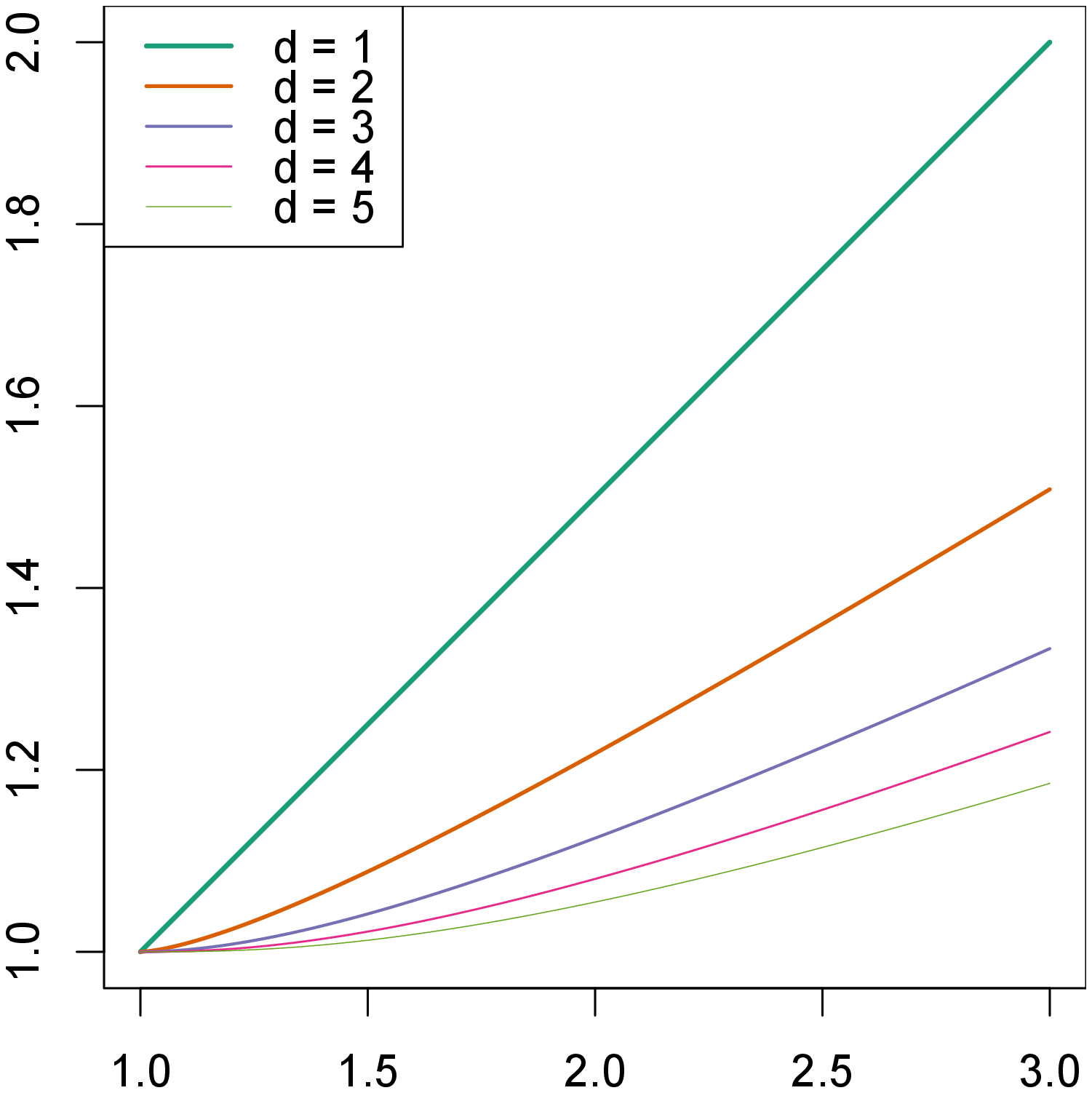} \quad \includegraphics[width=\twowidth\textwidth]{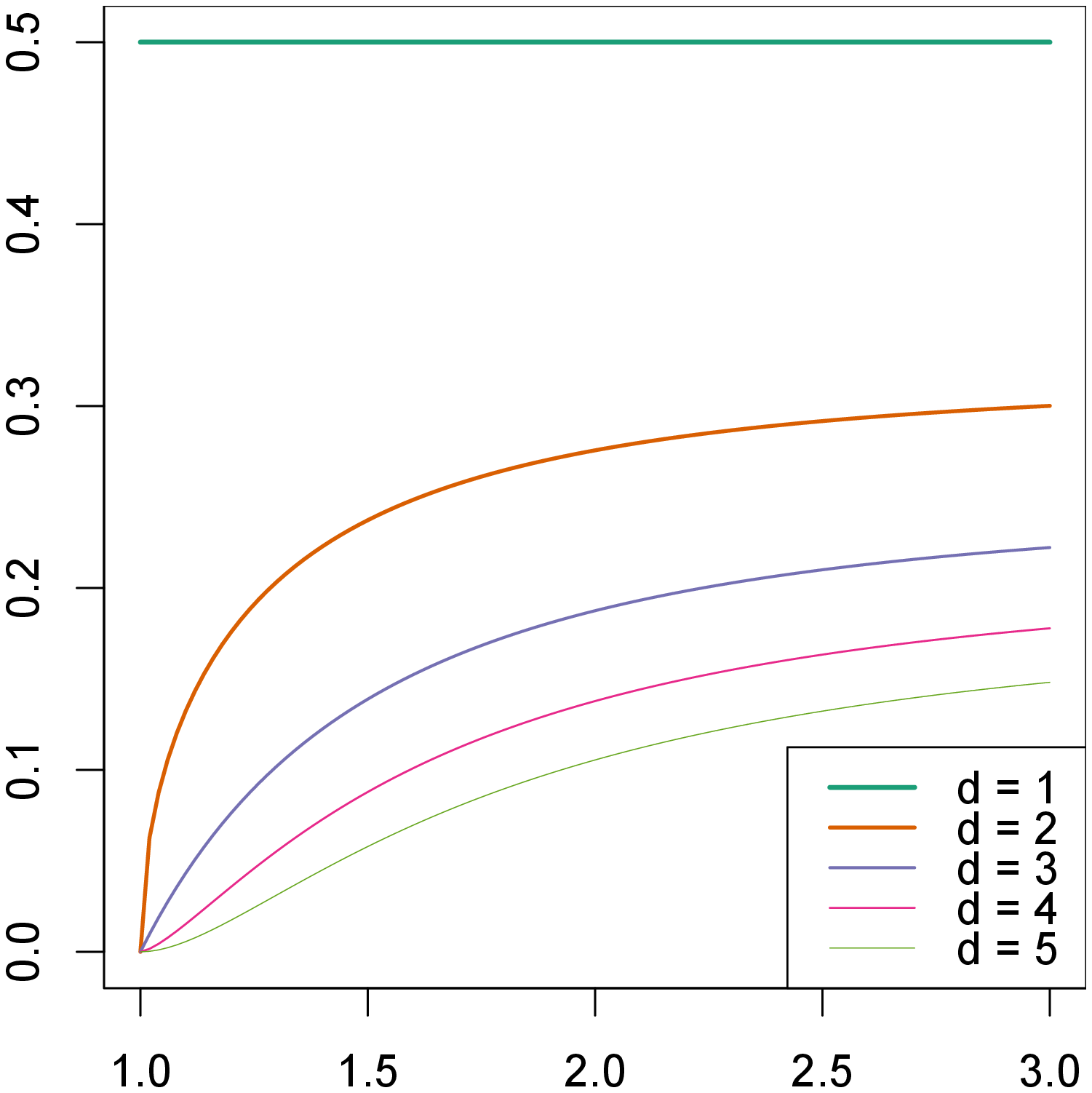}
\caption{First five functions $g_d$, $d=1,\dots,5$ (left panel) and their first derivatives (right panel).}
%\label{fig:g functions}
\end{figure}

\subsection{Lemma~\ref{lemma:g properties}}
The next lemma summarizes some analytical properties of the function $g_d$ defined in Section~\ref{section:illumination bodies}.

\begin{lemma} \label{lemma:g properties}
For all $d \geq 1$
	\begin{enumerate}[label=(\roman*), ref=(\roman*)]
	\item function $g_d \colon [1,\infty) \to [1,\infty)$ is uniformly continuous, strictly increasing, and convex;
	\item $g_d(1) = 1$, $\lim_{t \to \infty} g_d(t) = \infty$;
	\item $g_d$ is differentiable on $(1,\infty)$ and
		\begin{equation}	\label{g derivative}
		g_d^\prime(t) = \frac{\Gamma\left(\frac{d}{2}+1\right)}{\sqrt{\pi}\,\Gamma\left(\frac{d+1}{2}\right)} \frac{1}{d} \left(1 - \frac{1}{t^2}\right)^{(d-1)/2} \quad\mbox{for }t\in(1,\infty);	
		\end{equation}
	\item \label{O expression for g} $g_d(t) -1 = \mathcal O\left(\left(t-1\right)^{(d+1)/2}\right)$ as $t \to 1$ from the right;
	\item \label{modulus} the minimal modulus of continuity of the inverse function $g_d^{-1}$ takes the form
		\[	w_{g_d^{-1}}(h) = \sup_{\left\vert s - t \right\vert < h} \left\vert g_d^{-1}(s) - g_d^{-1}(t) \right\vert = g_d^{-1}(1 + h) - 1 \quad\mbox{for }h\geq 0;	\]
	\item %\label{modulus rate} 
		as $h \to 0$ from the right, $w_{g_d^{-1}}(h) = \mathcal O\left( h^{2/(d+1)} \right)$;
	\item as $t \to \infty$, $g_d^{-1}(t) = \mathcal O\left(t\right)$.
	\end{enumerate}
\end{lemma}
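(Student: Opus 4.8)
The plan is to derive everything from the closed-form expression for $\Ill(x;\Ell)$ furnished by Lemma~\ref{lemma:ellipsoid}. Writing $t=\dist(x,\mu)$ and $c_d=\Gamma(d/2+1)/(\sqrt\pi\,\Gamma((d+1)/2))$, that lemma (after dividing by $\vol{\Ell}$ and cancelling the $\pi$- and $\Gamma$-factors) gives, for $t>1$,
\[
g_d(t) = 1 + c_d\left( \frac{t}{d}\left(1-\frac{1}{t^2}\right)^{(d+1)/2} - \int_0^{\arccos(1/t)}\sin^d s \dd s\right),
\]
together with $g_d(1)=1$ by direct substitution (for $d=1$ the bracket collapses and $g_1(t)=(t+1)/2$). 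The first main step is part~(iii): differentiate this expression. Using that the derivative of $\arccos(1/t)$ is $\bigl(t^2\sqrt{1-1/t^2}\bigr)^{-1}$ and $\sin(\arccos(1/t))=\sqrt{1-1/t^2}$, the derivative of the integral term equals $t^{-2}(1-1/t^2)^{(d-1)/2}$, while the derivative of the cone term is $\tfrac1d(1-1/t^2)^{(d+1)/2}+\tfrac{d+1}{d}t^{-2}(1-1/t^2)^{(d-1)/2}$; subtracting and collapsing the factor $(1-1/t^2)+t^{-2}=1$ yields exactly $g_d'(t)=\tfrac{c_d}{d}(1-1/t^2)^{(d-1)/2}$, i.e.\ \eqref{g derivative}. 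I expect this cancellation, together with the bookkeeping of the $\Gamma$-constants, to be the only genuinely delicate point in the lemma; everything else is elementary analysis of $g_d'$.

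Indeed, since $0<1-1/t^2<1$ on $(1,\infty)$ and $(d-1)/2\ge 0$, the derivative satisfies $0<g_d'(t)\le c_d/d$ and is nondecreasing; hence $g_d$ is strictly increasing and convex, and (being Lipschitz on $[1,\infty)$, with continuity at $t=1$ because $g_d(t)\to 1$ as $t\downarrow 1$) uniformly continuous, which is (i). For (ii), $g_d(t)=1+\int_1^t g_d'(s)\dd s$ with $g_d'(s)\to c_d/d>0$ forces $g_d(t)\to\infty$. Therefore $g_d\colon[1,\infty)\to[1,\infty)$ is a continuous strictly increasing bijection, so a continuous strictly increasing inverse $g_d^{-1}$ exists, and — as the inverse of a convex increasing function — it is concave with $g_d^{-1}(1)=1$. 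For (iv) I would sandwich $1-1/s^2=(s-1)(s+1)/s^2$ between $\tfrac12(s-1)$ and $2(s-1)$ for $s\in(1,2]$; feeding this into $g_d(t)-1=\tfrac{c_d}{d}\int_1^t(1-1/s^2)^{(d-1)/2}\dd s$ and using $\int_1^t(s-1)^{(d-1)/2}\dd s=\tfrac{2}{d+1}(t-1)^{(d+1)/2}$ gives both the stated $\mathcal O$-bound and the matching lower bound $g_d(t)-1\ge c'(t-1)^{(d+1)/2}$ near $t=1$, which (vi) will need.

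For (v), concavity of $\psi:=g_d^{-1}$ makes $\psi(t+\eta)-\psi(t)$ nonincreasing in $t$ for fixed $\eta>0$, so whenever $1\le t<s$ with $s-t<h$ one has $\psi(s)-\psi(t)\le\psi(1+(s-t))-\psi(1)\le\psi(1+h)-1$; choosing $t=1$ and letting $s\uparrow 1+h$ shows sharpness, so $w_{g_d^{-1}}(h)=g_d^{-1}(1+h)-1$. Part~(vi) is then immediate: putting $h=g_d(t)-1$ into the lower bound from (iv) gives $g_d^{-1}(1+h)-1=t-1\le(h/c')^{2/(d+1)}$ for all small $h$. Finally, for (vii) I would use convexity once more: $g_d(t)\ge g_d(2)+g_d'(2)(t-2)$ for $t\ge 2$ with $g_d'(2)>0$ is a linear lower bound for $g_d$, which inverts to $g_d^{-1}(u)\le au+b=\mathcal O(u)$ as $u\to\infty$. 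The only tedious parts are the constant-chasing in (iii) and (iv); the substantive content is the derivative cancellation and the concavity argument for the modulus of continuity.
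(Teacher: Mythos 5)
Your proof is correct, and it follows the same overall skeleton as the paper's: part~(iii) by direct differentiation of the closed-form expression from Lemma~\ref{lemma:ellipsoid}, then reading off monotonicity, convexity, Lipschitz continuity, the behaviour at $t=1$ and at $\infty$ from the explicit derivative, and deriving (v)--(vii) from concavity of the inverse and the asymptotics near $t=1$. The genuine differences are in parts (iv) and (vii). The paper proves (iv) via l'H\^opital on $\bigl(g_d(t)-1\bigr)/(t-1)^{(d+1)/2}$ and (vii) via the substitution $t=g_d(s)$ followed by another application of l'H\^opital; you instead prove (iv) by sandwiching $1-1/s^2$ between constant multiples of $s-1$ near $s=1$ and integrating $g_d'$ directly, and (vii) by a tangent-line lower bound on the convex function $g_d$ at $t=2$. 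Both routes are short and elementary; yours has the minor advantage of producing the two-sided estimate $g_d(t)-1\asymp(t-1)^{(d+1)/2}$ explicitly, which is precisely what (vi) needs, whereas the paper extracts the lower bound implicitly from the finite nonzero limit in its l'H\^opital computation. One small stylistic note: in (v) the supremum over $\lvert s-t\rvert<h$ is a limit rather than an attained value (as you observe by letting $s\uparrow 1+h$), and the paper glosses over this in the same way, so no issue there.
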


\begin{proof}
Using the Leibniz integral formula it is easy to see that the derivative of $g_d$ is \eqref{g derivative}. That function is positive, increasing, and bounded from above. Hence, $g_d$ is strictly increasing, convex, and Lipschitz continuous. Part~\ref{O expression for g} follows by an application of l'H\^{o}pital's rule
	\begin{equation}	\label{limit for g}
	\begin{aligned}
	\lim_{t \to 1} \frac{g_d(t) - 1}{\left(t - 1\right)^{(d+1)/2}} & = \lim_{t \to 1} \frac{\Gamma\left(\frac{d}{2}+1\right)}{\sqrt{\pi}\, \Gamma\left(\frac{d+1}{2}\right)} \frac{2}{d (d+1)} \frac{\left(\left( t - 1 \right) \left(t + 1\right)\right)^{(d-1)/2}}{t^{d-1} (t-1)^{(d-1)/2}} \\
	& = \frac{\Gamma\left(\frac{d}{2}+1\right)}{\sqrt{\pi}\, \Gamma\left(\frac{d+1}{2}\right)} \frac{2^{(d+1)/2}}{d(d+1)}.
	\end{aligned}
	\end{equation}
For Part~\ref{modulus} first note that because $g_d$ is smooth, strictly increasing and convex, its inverse $g_d^{-1}$ must be smooth, strictly increasing and concave. For such a function the mean value theorem asserts that the greatest difference $g_d^{-1}(s) - g_d^{-1}(t)$ subject to $1 \leq t \leq s < t+h$ must be attained at the left endpoint of its domain, i.e. for $t = 1$ and $s = 1+h$. To obtain the rate of the modulus of continuity, note that by \eqref{limit for g} there exists $c > 0$ such that
	\[	g_d(t) - 1 \geq c \left( t - 1 \right)^{(d+1)/2} \quad\mbox{for all $t>1$ close enough to $1$}.	\]
Apply $g_d^{-1}$ to both sides of this inequality and substitute $h = c \left(t - 1\right)^{(d+1)/2}$ to get
	\[	g_d^{-1}(1+h) - 1 \leq \left(\frac{h}{c}\right)^{2/(d+1)} \quad\mbox{for all $h>0$ small enough},	\]
and the conclusion follows. Finally, using substitution $t = g_d(s)$ and l'H\^{o}pital's rule again,
	\[	\lim_{t\to\infty} \frac{g_d^{-1}\left(t\right)}{t} = \lim_{s\to\infty} \frac{g_d^{-1}\left( g_d\left(s\right)\right)}{g_d\left(s\right)} = \lim_{s \to \infty} \frac{s}{g_d(s)} = \lim_{s \to \infty} \frac{1}{g_d^\prime\left(s\right)} = \frac{d\sqrt{\pi}\,\Gamma\left(\frac{d+1}{2}\right)}{\Gamma\left(\frac{d}{2}+1\right)}.	\]
Hence, $g_d^{-1}(t) = \mathcal O\left(t\right)$ as $t\to\infty$.
\end{proof}

%
%

%\subsection{Proofs of results from Section~\ref{section:illumination depth}}

\subsection{Proof of Theorem~\ref{theorem:affine invariance}}
We only prove the first part of the theorem. The remaining parts are straightforward, and follow directly from the essential properties of the halfspace depth \cite{Nagy_etal2018s}, and the properties of the illumination \cite{Werner2006}. 

By the affine invariance of the halfspace depth \cite[Lemma~2.1]{Donoho_Gasko1992} we know that $\left(P_{A X + b}\right)_\alpha = A (P_X)_\alpha + b$. For the illumination, it follows that
	\[
	\begin{aligned}
	\frac{\Ill\left(A x + b;\left(P_{A X + b}\right)_\alpha\right)}{\vol{\left(P_{A X + b}\right)_\alpha}} & =  \frac{\vol{\co{\left(A (P_X)_\alpha + b\right) \cup \left\{ A x + b \right\}}}}{\vol{A (P_X)_\alpha + b}} \\
	& = \frac{\vol{A \left( (P_X)_\alpha \cup \left\{ x \right\} \right) + b}}{\vol{A (P_X)_\alpha + b}} = \frac{\Ill\left(x;(P_X)_\alpha\right)}{\vol{(P_X)_\alpha}}.
	\end{aligned}
	\]	 

\subsection{Proof of Theorem~\ref{theorem:consistency}}

We start with the illumination. From \cite[Theorem~4.2]{Dyckerhoff2018} we know that under the assumptions of the theorem, the central regions $P_\alpha$ are consistent for $P$ in the Hausdorff distance, i.e. 
	\begin{equation}	\label{Dyckerhoff}
	\Haus\left(P_{n,\alpha},P_\alpha\right) \xrightarrow[n\to\infty]{\as}0.
	\end{equation}
For any $x \in K_n$ we know that almost surely for $n$ large
	\begin{equation}	\label{illumination bound}
	\begin{aligned}
	\left\vert \Ill\left(x;P_{n,\alpha}\right) - \Ill\left(x;P_\alpha\right) \right\vert & = \left\vert \vol{\co{P_{n,\alpha} \cup \left\{ x \right\}}} - \vol{\co{P_{\alpha} \cup \left\{ x \right\}}} \right\vert \\
	& \leq c_{d} \sum_{j = 0}^{d-1} \Haus\left(\co{P_{\alpha} \cup \left\{ x \right\}}, \co{P_{n,\alpha} \cup \left\{ x \right\}}\right)^{d-j} R_n^j \\
	& \leq c_{d} \sum_{j = 0}^{d-1} \Haus\left(P_{\alpha} \cup \left\{ x \right\}, P_{n,\alpha} \cup \left\{ x \right\}\right)^{d-j} R_n^j \\
	& \leq c_{d} \sum_{j = 0}^{d-1} \Haus\left(P_{\alpha}, P_{n,\alpha} \right)^{d-j} R_n^j \\
	& \leq d \, c_{d} \, \Haus\left(P_{\alpha}, P_{n,\alpha} \right) \max\left\{1,R_n^{d-1}\right\}.
	\end{aligned}
	\end{equation}
In the inequalities we used Lemma~\ref{lemma:Lipschitz} stated below for $\Haus\left(P_{n,\alpha},P_\alpha\right)<1$, and the properties of the Hausdorff distance \citep[p.~64]{Schneider2014}. Since for a fixed compact set $K = K_n$ for all $n$ the term $R_n$ is constant, the first part of the theorem is verified in view of \eqref{Dyckerhoff}. 

To derive the rates of convergence, by \cite[Theorem~2]{Brunel2019} we have that $\Haus\left(P_{n,\alpha}, P_\alpha\right) = \mathcal O_{\PP}\left( n^{-1/2} \right)$, and the last inequality in \eqref{illumination bound} is enough to conclude.

For the affine invariant version of the illumination, write
	\begin{equation}	\label{robust illumination bound}
	\begin{aligned}
	\left\vert \frac{\Ill\left(x;P_{n,\alpha}\right)}{\vol{P_{n,\alpha}}} - \frac{\Ill\left(x;P_\alpha\right)}{\vol{P_\alpha}} \right\vert & \leq \frac{\left\vert \Ill(x;P_{n,\alpha}) - \Ill(x;P_\alpha) \right\vert}{\vol{P_{n,\alpha}}} \\ 
	& \phantom{\leq} + \left\vert \Ill(x;P_\alpha) \right\vert \left\vert \frac{1}{\vol{P_{n,\alpha}}} - \frac{1}{\vol{P_\alpha}} \right\vert.	
	\end{aligned}
	\end{equation}
By the assumptions of the theorem we know that $\vol{P_\alpha}>0$. From \eqref{Dyckerhoff} and Lemma~\ref{lemma:Lipschitz} it thus follows that for $n$ large enough $\vol{P_{n,\alpha}} \geq \vol{P_\alpha}/2$ almost surely, and that for such $n$ it also holds true that
	\[	\left\vert \frac{1}{\vol{P_{n,\alpha}}} - \frac{1}{\vol{P_\alpha}} \right\vert \leq \frac{2 \, \left\vert \vol{P_{n,\alpha}} - \vol{P_\alpha} \right\vert}{\vol{P_\alpha}^2} \leq \frac{2\, d \, c_d \max\left\{1,R_1^{d-1}\right\} \, \Haus\left(P_\alpha, P_{n,\alpha}\right)}{\vol{P_\alpha}^2}	\]
almost surely, for $c_d > 0$ the constant from Lemma~\ref{lemma:Lipschitz}. By \cite[Theorem~2]{Brunel2019} the last formula can be written also as
	\[	\left\vert \frac{1}{\vol{P_{n,\alpha}}} - \frac{1}{\vol{P_\alpha}} \right\vert = \mathcal O_{\PP}\left(n^{-1/2}\right).	\]
Finally, because $P_\alpha$ is a fixed bounded set, a trivial upper bound for $\sup_{x \in K_n} \left\vert \Ill(x;P_\alpha) \right\vert$ is the maximum illumination of $x \in K_n$ w.r.t. the smallest enclosing ball of $P_\alpha$. By Lemmas~\ref{lemma:ellipsoid} and~\ref{lemma:g properties} this is of order $\mathcal O\left( R_n \right)$. Altogether, all the above bounds and the consistency result for $\Ill$ can be plugged into \eqref{robust illumination bound} to obtain the desired rate of convergence
	\[	
	\begin{aligned}
	\sup_{x \in K_n} \left\vert \frac{\Ill\left(x;P_{n,\alpha}\right)}{\vol{P_{n,\alpha}}} - \frac{\Ill\left(x;P_\alpha\right)}{\vol{P_\alpha}} \right\vert & = \mathcal O_{\PP}\left(\frac{\max\{1,R_n^{d-1}\}}{\sqrt{n}}\right) + \mathcal O\left(R_n\right) \mathcal O_{\PP}\left(\frac{1}{\sqrt{n}}\right) \\
	& = \mathcal O_{\PP}\left(\frac{\max\{1,R_n^{d-1}\}}{\sqrt{n}}\right).	
	\end{aligned}
	\]

\begin{lemma}	\label{lemma:Lipschitz}
Let $R>0$. There exists a constant $c_{d} > 0$ such that for all convex bodies $K, L \subset \R^d$ with $K \subset \B\left(x,R\right)$ for some $x \in \R^d$
	\[	\left\vert \vol{K} - \vol{L} \right\vert \leq c_{d} \sum_{j = 0}^{d-1} \Haus\left(K,L\right)^{d-j} R^j.	\]
\end{lemma}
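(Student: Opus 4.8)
The plan is to reduce the bound to the classical Steiner formula for the volume of a Minkowski sum with a ball. Write $h=\Haus(K,L)$ and let $\B$ denote the closed Euclidean unit ball, with $\oplus$ Minkowski addition. First I would record the two inclusions built into the definition of the Hausdorff distance, $L\subseteq K\oplus h\B$ and $K\subseteq L\oplus h\B$, and note the immediate consequence $L\subseteq\B\left(x,R\right)\oplus h\B=\B\left(x,R+h\right)$; thus \emph{both} bodies lie in a common ball of radius $R+h$, even though the hypothesis only places $K$ inside $\B\left(x,R\right)$.

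Assume first $\vol{K}\geq\vol{L}$. Using $K\subseteq L\oplus h\B$ and monotonicity of volume, together with the Steiner formula,
\[
0\leq\vol{K}-\vol{L}\leq\vol{L\oplus h\B}-\vol{L}=\sum_{i=0}^{d-1}\kappa_{d-i}\,V_i(L)\,h^{d-i},
\]
where $V_0,\dots,V_d$ are the intrinsic volumes, $\kappa_j$ is the volume of the $j$-dimensional unit ball, and the $i=d$ term (which equals $\vol{L}$) has cancelled. Since intrinsic volumes are monotone under inclusion and $L\subseteq\B\left(x,R+h\right)$, I would then bound $V_i(L)\leq V_i\bigl(\B\left(x,R+h\right)\bigr)=(R+h)^i\,V_i(\B)$ and expand $(R+h)^i$ by the binomial theorem. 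Every resulting summand is a numerical multiple — with a coefficient assembled only from $\kappa$'s, $V_i(\B)$'s and binomial coefficients, hence depending on $d$ alone — of a monomial $R^j h^{d-j}$ with $0\leq j\leq d-1$ (no pure $R^d$ term can arise, since the exponent of $h$ is always at least one). Summing the coefficients to a single constant $c_d$ gives $\vol{K}-\vol{L}\leq c_d\sum_{j=0}^{d-1}\Haus(K,L)^{d-j}R^j$.

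The case $\vol{L}>\vol{K}$ is entirely symmetric: one uses $L\subseteq K\oplus h\B$ instead, bounding $V_i(K)$ through $K\subseteq\B\left(x,R\right)\subseteq\B\left(x,R+h\right)$, and obtains the identical estimate; combining the two cases finishes the proof. (If one prefers to avoid intrinsic volumes, the same conclusion follows from the identity $\vol{L\oplus h\B}-\vol{L}=\int_0^h S(L\oplus t\B)\dd t$, the monotonicity bound $S(L\oplus t\B)\leq S\bigl(\B\left(x,R+2h\right)\bigr)$ for $t\in[0,h]$, and an elementary integration followed by the same binomial expansion.) I do not expect a genuine obstacle here; the only points requiring a little care are the asymmetry of the hypothesis — resolved by passing to the enlarged ball $\B\left(x,R+h\right)$ — and the bookkeeping that checks every monomial in the final bound carries an $h$-exponent at least one, so that it matches $\sum_{j=0}^{d-1}\Haus(K,L)^{d-j}R^j$ and the constant is free of $R$ and $h$.
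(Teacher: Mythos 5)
Your proof is correct and follows essentially the same route as the paper's: Hausdorff-distance inclusions, a case split on which volume is larger, the Steiner formula, monotonicity of intrinsic volumes against an enclosing ball of radius $R+h$, and a binomial expansion to assemble the constant $c_d$. The only cosmetic difference is that you bound $V_i(K)$ via the slightly larger ball $\B(x,R+h)$ for uniformity, whereas the paper uses $\B(x,R)$ for $K$ and $\B(x,R+\delta)$ only for $L$; this changes nothing of substance.
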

	
\begin{proof}
Write $\delta = \Haus\left(K,L\right)$. From the definition of the Hausdorff distance \eqref{Hausdorff distance} we have that
	\begin{equation}	\label{inclusions}
	K \subset L + \delta \B \mbox{ and }L \subset K + \delta \B.	
	\end{equation}
If $\vol{K} \leq \vol{L}$, this gives $\vol{K} \leq \vol{L} \leq \vol{K + \delta \B}$; in the other case $\vol{L} < \vol{K}$ we get $\vol{L} < \vol{K} \leq \vol{L + \delta \B}$. This results in 
	\[	 \left\vert \vol{K} - \vol{L} \right\vert \leq \max\left\{ \vol{K + \delta \B} - \vol{K}, \vol{L + \delta \B} - \vol{L} \right\},	\]
and it is enough to bound the excess volume of the outer parallel body $K + \delta \B$ of a convex body $K$, and analogously for $L$. For this, use the Steiner formula \citep[Formula~(4.1)]{Schneider2014}
	\[	\vol{K + \delta \B} = \sum_{j=0}^d \delta^{d - j} \vol[d-j]{\B[d-j]} V_j(K),	\]
where $V_j(K)$ stands for the intrinsic volume of the convex body $K$ \citep[Chapter~4]{Schneider2014}. In particular, it holds true that $V_d(K) = \vol{K}$, $V_{d-1}(K)$ is proportional to the surface area measure of $K$, $V_1(K)$ is the so-called intrinsic width of $K$, and $V_0(K) = 1$. %By the Aleksandrov-Fenchel inequality \citep[see, e.g.,][Theorem~2]{McMullen1991} we can bound
%	\[	\vol{K + \delta \B} \leq \sum_{j=0}^{d-1} \delta^{d - j} \vol[d-j]{\B[d-j]} \frac{V_1(K)^j}{j!} + \vol{K}.	\]

From the monotonicity of the intrinsic volumes that follows from formulas (5.25) and (5.31) in \cite{Schneider2014}, and $K \subset \B\left(x,R\right)$, we can use the expression for the intrinsic volumes of a ball~(4.64) from \cite{Schneider2014} and bound
	\begin{equation}	\label{K volume bound}	
	\begin{aligned}
	\vol{K + \delta \B} - \vol{K} & \leq \sum_{j=0}^{d-1} \delta^{d - j} \vol[d-j]{\B[d-j]} V_j(\B) \\
	& = \vol{\B} \sum_{j=0}^{d-1} \delta^{d - j} \binom{d}{j} R^j.	
	\end{aligned}
	\end{equation}
	
For a bound on the excess volume of $L + \delta \B$, first note that from \eqref{inclusions} we have
	\[	L \subset K + \delta \B \subset \B\left(x,R\right) + \B\left(0,\delta\right) = \B\left(x,R+\delta\right).	\]
Similarly as in \eqref{K volume bound} we can thus write
	\[	
	\begin{aligned}
	\vol{L + \delta \B} - \vol{L} & \leq \vol{\B} \sum_{j=0}^{d-1} \delta^{d - j} \binom{d}{j} \left(R + \delta\right)^j \\
	& = \vol{\B} \sum_{j=0}^{d-1} \delta^{d - j} \binom{d}{j} \sum_{k=0}^{j} \binom{j}{k} R^k \delta^{j-k} \\
	& = \vol{\B} \sum_{k=0}^{d-1} \delta^{d - k} R^k \sum_{j=k}^{d-1} \binom{d}{j} \binom{j}{k}.
	\end{aligned}
	\]
From \eqref{K volume bound} and the last inequality we see that our claim holds true for 
	\[	c_d = \vol{\B} \max_{k=0,\dots,d-1} \sum_{j=k}^{d-1} \binom{d}{j} \binom{j}{k},	\]
the maximum of all the terms that are constant in $R$ and $\delta$ in the sums on the right-hand sides of the two excess volume bounds.
\end{proof}

\subsection{Consistency of the illumination on unbounded sets}	\label{section:consistency}

Over unbounded subsets of $\R^d$ with $d>1$, neither illumination, nor the illumination depth are uniformly consistent. So see this take a convex body $K$ in $\R^d$, $y$ in the distance of $\varepsilon > 0$ from $K$, and let $K_y = \co{K \cup \{ y \}}$. Surely, $\Haus\left(K_y, K\right) = \varepsilon$. By the Hahn-Banach separation theorem \cite[Theorem~1.3.7]{Schneider2014}, $y$ and $K$ can be strongly separated by two parallel hyperplanes $H_1$, $H_2$ whose distance is at least $\varepsilon/2$ and $y \in H_1$. Take $x \in H_2$ far enough from $y$. The illumination $\Ill\left(x;K_y\right)$ and $\Ill\left(x;K\right)$ then differs by, at least, the illumination of $x$ onto the cone $K_y \cap H_2^+$ for $H_2^+$ the halfspace whose boundary is $H_2$ and $y \in H_2^+$. This illumination can be bounded from both below and above by the illumination of $x$ on any two balls $B_1$ and $B_2$ centred at some $z \in K_y \cap H_2^+$ such that $B_1 \subset K_y \cap H_2^+ \subset B_2$, respectively. By Lemmas~\ref{lemma:ellipsoid} and~\ref{lemma:g properties}, the latter two illuminations both grow with increasing $R = \left\Vert z - x \right\Vert$ at a rate $\mathcal O\left(R\right)$, i.e. $\Ill\left(x;K_y\right) - \Ill\left(x;K\right) = \mathcal O\left(\left\Vert z - x \right\Vert\right)$ with $H_2 \ni x \to \infty$. In other words, for any $\varepsilon > 0$ one can find $x$ far enough so that $\Ill\left(x;K_y\right) - \Ill\left(x;K\right) \geq 1$. Consequently, even if the distance $\Haus\left(K_n, K\right)$ converges to zero (almost surely), the illumination differences $\left\vert \Ill\left(x;K_n\right) - \Ill\left(x;K\right) \right\vert$ and $\left\vert \Ill\left(x;K_n\right)/\vol{K_n} - \Ill\left(x;K\right)/\vol{K} \right\vert$ cannot, in general, vanish uniformly over unbounded sets. The same example applies to the second component of $\ID$.

\subsection{Proof of Theorem~\ref{theorem:breakdown}}	\label{proof:breakdown}
For $x$ fixed, the illumination of $x$ tends to infinity if and only if the halfspace depth central region $P_{n,\alpha}$ breaks down. Hence, it is enough to evaluate the breakdown point of $P_{n,\alpha}$ with respect to the Hausdorff distance. We follow the derivations in the proofs of \cite[Proposition~2.2]{Donoho1982} and \cite[Proposition~3.2]{Donoho_Gasko1992}. Let $x_M \in \R^d$ be (any) halfspace median of $P_n$, that is let $\HD\left(x_M;P_n\right) = \Pi(P_n)$. By the argument used in the proof of \cite[Lemma~3.1]{Donoho_Gasko1992} to upset the set $P_{n,\alpha} = \left\{ y \in \R^d \colon \HD\left(y;P_n\right) n \geq \lceil \alpha n \rceil \right\}$ entirely, the smallest number of additional points that need to be added to the data is $m$, the smallest integer that satisfies $m \geq \lceil \alpha \left(m + n \right) \rceil$ (compare with formula~(6.19) in \cite{Donoho_Gasko1992}). This inequality is solved by $m = \lceil (\alpha/(1 - \alpha)) n \rceil$. The additional condition $\alpha \leq \Pi(P_n)/(1+\Pi(P_n))$ ensures that $m \leq \lceil \Pi(P_n)\, n \rceil = \Pi(P_n)\, n$. From this it follows that the depth of $x_M$ with respect to the contaminated dataset must be at least $\Pi(P_n)\, n/(n + m) \geq \Pi(P_n)/(1 + \Pi(P_n)) \geq \alpha$. Hence, after the contamination procedure, the central region of points whose depth is at least $\alpha$ must be non-empty.

In the situation when $\alpha > \Pi(P_n)/(1+\Pi(P_n))$, due to the nestedness of the central regions $P_{n,\alpha}$, by the previous part of the proof at least 
	\[	m = \left\lceil \frac{\Pi(P_n)/(1+\Pi(P_n))}{1 - \Pi(P_n)/(1+\Pi(P_n))} n \right\rceil = \left\lceil \Pi(P_n)\, n \right\rceil = \Pi(P_n)\, n	\]
contaminating points are needed. 
%With this many points placed at the same position $y$ outside the convex hull of the original data $P_n$, the halfspace depth of $y$ with respect to the contaminated dataset is at least
	%\[	\frac{\left\lceil \Pi(P_n) n \right\rceil}{n + \left\lceil \Pi(P_n) n \right\rceil},	\]
%which is at least the value of the depth of $x_M$ in the new configuration. Therefore, \begin{enumerate*}[label=(\roman*)] \item either in the new configuration of points $Q_{n+m}$ the level set $Q_{n+m,\alpha}$ is an empty set in which case its $\Haus$ distance to $P_{n,\alpha}$ is infinite; or \item FINSH \end{enumerate*} 

The corollary with the asymptotic value of the breakdown point follows the same argument as in the proof of \cite[Propositions~3.2 and~3.3]{Donoho_Gasko1992}.

\subsection{Proof of Theorem~\ref{theorem:M properties}}

The proofs of parts~\ref{M properties i}, \ref{M properties ii} and~\ref{M properties iii} are straightforward and analogous to the proof of Theorem~\ref{theorem:affine invariance}. For part~\ref{M properties iv} it is sufficient to realise that according to the non-degeneracy of $P$, and symmetry conditions imposed on the estimator $F_n$, the lower level set of $M_\alpha\left(\cdot;P_n\right)$ is large if and only if either \begin{enumerate*}[label=(\roman*)] \item the central region $P_{n,\alpha}$ is extremely large; or \item $F_n^{-1}\left(1-\alpha\right)$ is extremely small. \end{enumerate*} By Theorem~\ref{theorem:breakdown}, for the former case, asymptotically at least $m \approx n \min\{\alpha,1/3\}/(1 - \min\{\alpha,1/3\})$ contaminating points have to be added to the random sample to disrupt the central region entirely. In the latter case, unless there exists a configuration of $m$ points that make $F_n^{-1}\left( 1 - \alpha \right)$ arbitrarily small, the set $P_{n,\alpha}$ cannot be made arbitrarily large. By extension, no fixed lower level set \eqref{level set of M} can then be made too big. By the assumption on the breakdown point of $F_n^{-1}\left(1 - \alpha\right)$, in the second scenario it is even more difficult to break down the estimator \eqref{level set of M} than in the first one. Another option when the lower level set \eqref{level set of M} breaks down is when it is an empty set. But, that can happen only if for some $\delta > 0$ small enough, $F_n^{-1}\left( 1 - \Pi\left(P_n\right) \right) > \delta$. This is ruled out by the additional condition imposed on $\delta$. Thus, the resulting limiting breakdown point of the level set is the same as that of $P_{n,\alpha}$.

\subsection{Proof of Theorem~\ref{theorem:M consistency}}

By \eqref{elliptical depth} and \eqref{illumination for measure}, the Mahalanobis distance $\dist(x,\mu)$ can be written either as $F^{-1}\left( 1 - \HD\left(x;P\right)\right)$ for any $x \in \R^d$, or, in case when $x \notin P_{\alpha}$, also as $F^{-1}\left(1 - \alpha\right) g_d^{-1}\left(\Ill\left(x;P_\alpha\right)/\vol{P_\alpha}\right)$. It is thus sufficient to bound
	\[
	\begin{aligned}
	\sup_{x \in K_n} & \left\vert M_\alpha\left(x;P_n\right) - \dist(x,\mu) \right\vert \\
	& \leq \sup_{x \in K_n \cap P_{n,\alpha}} \left\vert M_\alpha\left(x;P_n\right) - \dist(x,\mu) \right\vert + \sup_{x \in K_n \setminus P_{n,\alpha}} \left\vert M_\alpha\left(x;P_n\right) - \dist(x,\mu) \right\vert \\
	& \leq \sup_{x \in K_n \cap P_{n,\alpha}} \left\vert F_n^{-1}\left(1 - \HD\left(x;P_n\right)\right) - F^{-1}\left(1 - \HD\left(x;P\right)\right) \right\vert \\
	& \phantom{\leq} + \sup_{x \in K_n \setminus \left(P_{n,\alpha} \cup P_\alpha\right)} \left\vert F_n^{-1}\left(1 - \alpha\right) g_d^{-1}\left(\frac{\Ill\left(x;P_{n,\alpha}\right)}{\vol{P_{n,\alpha}}} \right) - F^{-1}\left(1 - \alpha\right) g_d^{-1}\left(\frac{\Ill\left(x;P_{\alpha}\right)}{\vol{P_{\alpha}}} \right) \right\vert \\
	& \phantom{\leq} + \sup_{x \in K_n \cap \left( P_\alpha \setminus P_{n,\alpha} \right)} \left\vert M_\alpha\left(x;P_n\right) - \dist(x,\mu) \right\vert.
	\end{aligned}
	\]
The three suprema on the right hand side will be treated separately. Denote them by \textbf{I}, \textbf{II}, and \textbf{III}, respectively.

\subsubsection{Supremum \textbf{I}} The sample halfspace depth $\HD\left(\cdot;P_n\right)$ is known \cite[formula~(6.6)]{Donoho_Gasko1992} to be a uniformly consistent estimator of its population version
	\begin{equation}	\label{HD consistency}
	\sup_{x \in \R^d} \left\vert \HD\left(x;P_n\right) - \HD\left(x;P\right) \right\vert \xrightarrow[n\to\infty]{\as}0.	
	\end{equation}
Because $P$ is halfspace symmetric, yet its centre of symmetry has zero probability mass, for any $x \in K_n \cap P_{n,\alpha}$ we have $\alpha \leq \HD\left(x;P_n\right) \leq 1/2$, with the second inequality almost surely for all $n$ large enough due to \eqref{HD consistency}. We may use the consistency \eqref{HD consistency} again to get that for any $\varepsilon > 0$ small, $\alpha - \varepsilon \leq \HD\left(x;P\right) \leq 1/2$ for all $x \in K_n \cap P_{n,\alpha}$ and $n$ large enough.

Function $F$ is strictly increasing in a neighbourhood of $[0,F^{-1}\left(1 - \alpha\right)]$. Thus, $F^{-1}$ is (uniformly) continuous on $I = [1/2, 1 - \alpha]$. Its approximating sequence $\left\{ F_n^{-1} \right\}_{n=1}^\infty$ is a sequence of functions that are non-decreasing, and converge to $F^{-1}$ at each $t \in I$ by the uniform consistency of $F_n$ from \eqref{F consistency}, and \cite[Lemma~21.2]{Vandervaart1998}. A lemma of P\'olya \cite[Problem~127, part~II]{Polya_Szergo1998} gives that this convergence is uniform on $I$. We can thus write for $n$ large enough
	\[	
	\begin{aligned}
	\textbf{I} & \leq \sup_{x \in K_n \cap P_{n,\alpha}} \left\vert F_n^{-1}\left(1 - \HD\left(x;P_n\right)\right) - F^{-1}\left(1 - \HD\left(x;P_n\right)\right) \right\vert \\
	& \phantom{\leq} + \sup_{x \in K_n \cap P_{n,\alpha}} \left\vert F^{-1}\left(1 - \HD\left(x;P_n\right)\right) - F^{-1}\left(1 - \HD\left(x;P\right)\right) \right\vert \\
	& \leq \sup_{t \in I} \left\vert F_n^{-1}\left(t\right) - F^{-1}\left(t\right) \right\vert + w_{F^{-1}}\left(\sup_{x \in \R^d} \left\vert \HD\left(x;P_n\right) - \HD\left(x;P\right) \right\vert \right),
	\end{aligned}
	\]
where $w_{F^{-1}}$ is the minimal modulus of continuity of $F^{-1}$ restricted to the interval $I$. The first supremum on the right hand is small almost surely for $n$ large by the uniform convergence of the quantile functions established above. The second will vanish almost surely because of \eqref{HD consistency} and the uniform continuity of $F^{-1}$ on $I$.

\subsubsection{Supremum \textbf{II}} Let us first introduce the notation
	\begin{equation}	\label{a notation}
	\begin{aligned}
	a_x & = - g_d^{-1}\left(\frac{\Ill\left(x;P_{\alpha}\right)}{\vol{P_{\alpha}}}\right), \\
	a_{n,x} & = - g_d^{-1}\left(\frac{\Ill\left(x;P_{n,\alpha}\right)}{\vol{P_{n,\alpha}}}\right),
	\end{aligned} \qquad 
	\begin{aligned}
	b & = F^{-1}\left(1 - \alpha \right), \\
	b_n & = F_n^{-1}\left(1 - \alpha \right).
	\end{aligned}
	\end{equation} 
In supremum \textbf{II} we bound for $L_n^{II} = K_n \setminus \left(P_{n,\alpha} \cup P_\alpha\right)$
	\begin{equation}	\label{a b bound}
	\sup_{x \in L_n^{II}} \left\vert a_{n,x} b_n - a_x b\right\vert \leq \left\vert b_n - b \right\vert \sup_{x \in L_n^{II}} \left\vert a_x \right\vert + \left\vert b_n \right\vert \sup_{x \in L_n^{II}} \left\vert a_{n,x} - a_x \right\vert.	
	\end{equation}
For the supremum in the first summand in \eqref{a b bound} we know from \eqref{illumination for measure} that
	\begin{equation}	\label{a bound}	
	\begin{aligned}
	\sup_{x \in L_n^{II}} \left\vert a_x \right\vert & \leq \sup_{x \in \B\left(\mu,R_n\right)} \frac{\dist(x,\mu)}{F^{-1}\left(1 - \alpha\right)} = \frac{R_n}{F^{-1}\left(1 - \alpha\right)} \sup_{x \in \B\left(\mu,1\right)} \dist(x,\mu) \\
	& = \frac{R_n}{F^{-1}\left(1 - \alpha\right)} \sqrt{\sup_{\left\Vert x \right\Vert = 1} x\tr \Sigma^{-1} x}	= \frac{R_n}{F^{-1}\left(1 - \alpha\right)} \sqrt{1/\lambda} = \mathcal O\left(R_n\right),
	\end{aligned}
	\end{equation}
where $\lambda > 0$ is the smallest eigenvalue of $\Sigma$. Using the assumption~\eqref{quantile rate} we see that the first summand on the right hand side of \eqref{a b bound} vanishes in probability as $n\to\infty$. Furthermore, by \eqref{quantile rate} we also have that $\left\vert b_n \right\vert = \mathcal O_{\PP}(1)$, and by Lemma~\ref{lemma:g properties} together with Theorem~\ref{theorem:consistency}
	\begin{equation}	\label{an a bound}	
	\begin{aligned}
	\sup_{x \in L_n^{II}} \left\vert a_{n,x} - a_x \right\vert & \leq w_{g_d^{-1}}\left( \sup_{x \in L_n^{II}} \left\vert \frac{\Ill\left(x;P_{n,\alpha}\right)}{\vol{P_{n,\alpha}}} - \frac{\Ill\left(x;P_\alpha\right)}{\vol{P_\alpha}} \right\vert \right) = w_{g_d^{-1}} \left( \mathcal O_{\PP}\left( \frac{\max\left\{1,R_n^{d-1}\right\}}{\sqrt{n}} \right) \right) \\
	& = \mathcal O_{\PP}\left( \left( \frac{\max\left\{1,R_n^{d-1}\right\}}{\sqrt{n}} \right)^{2/(d+1)} \right) = o_{\PP}(1),
	\end{aligned}
	\end{equation}
where $w_{g_d^{-1}}$ is the minimal modulus of continuity of $g_d^{-1}$ from Lemma~\ref{lemma:g properties}. Together, we have verified that
	\begin{equation}	\label{a b rate}	
	\sup_{x \in L_n^{II}} \left\vert a_{n,x} b_n - a_x b\right\vert = \mathcal O\left(R_n\right) o_{\PP}\left(1/R_n\right) + \mathcal O_{\PP}(1) \mathcal O_{\PP}\left( \left( \frac{\max\left\{1,R_n^{d-1}\right\}}{\sqrt{n}} \right)^{2/(d+1)} \right) = o_{\PP}\left(1\right).	
	\end{equation}
	
\subsubsection{Supremum \textbf{III}} Here it will be crucial that under the conditions of the theorem, the set $P_\alpha \setminus P_{n,\alpha}$ is negligible as $n\to\infty$ by the consistency of the halfspace depth contours \eqref{Dyckerhoff}. First, without loss of generality, suppose that both $P_\alpha$ and $P_{n,\alpha}$ are contained in $K_n$. This is possible, because $P_\alpha$ is a fixed set, and the sequence $P_{n,\alpha}$ is convergent almost surely by \eqref{Dyckerhoff}. Thus, possible enlargement of $K_n$ by a fixed set does not affect any results in this proof. Take $x \in L_n^{III} = K_n \cap \left( P_\alpha \setminus P_{n,\alpha} \right)$. As $x \notin P_{n,\alpha}$, 
	\[	M_\alpha\left(x;P_n\right) = F_n^{-1}\left(1 - \alpha\right) g_d^{-1}\left( \frac{\Ill\left(x;P_{n,\alpha}\right)}{\vol{P_{n,\alpha}}} \right).	\]
In terms of $x$, this expression varies monotonically with $\Ill\left(x;P_{n,\alpha}\right)$. Note that for $K$ a convex body, the illumination $\Ill\left(\cdot;K\right)$ strictly increases on any straight halfline $L$ that starts from $x \in \partial K$ (the boundary of $K$) and does not intersect $K$ elsewhere, i.e. $K \cap L = \{ x \}$. Thus, in our situation, if one considers any halfline that starts at a boundary point of $P_{n,\alpha}$ and passes through $x$,
	\[	\inf_{y \in \partial P_{n,\alpha}} M_\alpha\left(y;P_n\right) \leq M_\alpha\left(x;P_n\right) \leq \sup_{y \in \partial P_\alpha} M_\alpha\left(y;P_n\right).	\]
On the boundary of $P_{n,\alpha}$ we are in the situation dealt with in supremum \textbf{I}, and by that part of the proof we know that for $\varepsilon > 0$ given, almost surely for any $n$ large enough, 
	\[	\inf_{y \in \partial P_{n,\alpha}} \dist(y,\mu) - \varepsilon \leq \inf_{y \in \partial P_{n,\alpha}} M_\alpha\left(y;P_n\right).	\]
Likewise, for the upper bound, by part \textbf{II} of this proof, and the continuity of $\dist(x,\mu)$, we have an analogous restriction, and with high probability, for $n$ large enough,
	\[	\sup_{y \in \partial P_\alpha} M_\alpha\left(y;P_n\right) \leq \sup_{y \in \partial P_\alpha} \dist(y,\mu) + \varepsilon = F^{-1}\left(1 - \alpha\right) + \varepsilon.	\]	
Finally, we use \eqref{Dyckerhoff} and the fact that the Hausdorff distances of convex bodies, and of their boundaries, are the same \cite[Lemma~1.8.1]{Schneider2014}. This gets that almost surely, for any $\delta > 0$, for all $n$ large enough, and any $y \in \partial P_{n,\alpha}$, there exists $z \in \partial P_\alpha$ such that $\left\Vert y - z \right\Vert < \delta$. Now, because $\dist(x,\mu)$ is in $x$ (uniformly) continuous in a uniform neighbourhood of $P_\alpha$, this means that almost surely, for $n$ large enough,
	\[	
	\begin{aligned}
	\inf_{y \in \partial P_{n,\alpha}} \dist(y,\mu) & \geq \inf_{y \in \partial P_\alpha} \dist(y,\mu) - \varepsilon = F^{-1}\left(1 - \alpha\right) - \varepsilon,
	\end{aligned}
	\]
and for any $x \in L_n^{III}$
	\[	F^{-1}\left(1 - \alpha\right) - \varepsilon \leq \dist(x,\mu) \leq F^{-1}\left( 1 - \alpha \right).	\]
Altogether, collect all the bounds in this part of the proof to get that for any $\varepsilon > 0$, with high probability, for $n$ large enough, 
	\[	\sup_{x \in L_n^{III}} \left\vert M_\alpha\left(x;P_n\right) - \dist(x,\mu) \right\vert \leq 2 \varepsilon,	\]
which finishes the proof.

\subsection{Proof of Theorem~\ref{theorem:F consistency}}

In view of the uniform consistency of the halfspace depth \eqref{HD consistency} it suffices to show that
	\[	\sup_{x \in K_n \setminus P_{n,\alpha}} \left\vert F_n\left(-g_d^{-1}\left(\frac{\Ill\left(x;P_{n,\alpha}\right)}{\vol{P_{n,\alpha}}} \right) F_n^{-1}\left(1 - \alpha \right) \right) - \HD\left(x;P\right) \right\vert \xrightarrow[n\to\infty]{\PP}0.	\]
Proceed analogously as in the proof of Theorem~\ref{theorem:M consistency}, and consider two situations --- the supremum above over $x \in L_n^{II} = K_n \setminus \left( P_{n,\alpha} \cup P_\alpha \right)$, and the the supremum over $x \in L_n^{III} = K_n \cap \left(P_\alpha \setminus P_{n,\alpha}\right)$. 

Suppose first that $x \in L_n^{II}$. By \eqref{elliptical depth} and \eqref{illumination for measure}, in the notation from \eqref{a notation} we have that
	\[
	\begin{aligned}
	\left\vert F_n\left(a_{n,x} b_n\right) - \HD\left(x;P\right) \right\vert & = \left\vert F_n\left( a_{n,x} b_n \right) - F\left(-\dist(x,\mu)\right) \right\vert \\
	& = \left\vert F_n\left( a_{n,x} b_n \right) - F\left(a_x b\right) \right\vert \\
	& \leq \left\vert F_n(a_{n,x} b_n) - F(a_{n,x} b_n) \right\vert + \left\vert F(a_{n,x} b_n) - F(a_x b) \right\vert.
	\end{aligned}
	\]
Therefore, 
	\[	\sup_{x \in L_n^{II}} \left\vert F_n\left(a_{n,x} b_n\right) - \HD\left(x;P\right) \right\vert \leq \sup_{t \in \R} \left\vert F_n(t) - F(t) \right\vert + \sup_{x \in L_n^{II}} \left\vert F(a_{n,x} b_n) - F(a_x b) \right\vert.	\]
The first summand on the right hand side above vanishes almost surely as $n\to\infty$ by \eqref{F consistency}. For the second summand we already have a bound from \eqref{a b rate} from the proof of Theorem~\ref{theorem:M consistency}. Since $F$ has a density, it must be uniformly continuous on $\R$. Denote by $w_F \colon (0,\infty) \to \R$ its minimal modulus of continuity. We obtain 
	\[	\sup_{x \in L_n^{II}} \left\vert F(a_{n,x} b_n) - F(a_x b) \right\vert \leq w_F\left( \sup_{x \in L_n^{II}} \left\vert a_{n,x}b_n - a_x b \right\vert \right) = w_F\left( o_{\PP}(1) \right) = o_{\PP}(1).	\]
This completes the part of the proof with $L_n^{II}$.

For the second part, consider $x \in L_n^{III}$. Note that thanks to \eqref{elliptical depth} and the continuity of $F$ in a neighbourhood of $F^{-1}\left(1 - \alpha\right)$, the halfspace depth $\HD\left(\cdot;P\right)$ must be (uniformly) continuous in a uniform neighbourhood of $P_\alpha$. Furthermore, for $x \notin P_{n,\alpha}$, $\RHD\left(x;P_n\right)$ varies monotonically with $\Ill\left(x;P_{n,\alpha}\right)$. Thus, derivation analogous to that from part \textbf{III} in the proof of Theorem~\ref{theorem:M consistency} gives that the convergence of the halfspace depth contours \eqref{Dyckerhoff} implies that with $n\to\infty$
	\[	\sup_{x \in L_n^{III}} \left\vert \RHD\left(x;P_n\right) - \HD\left(x;P\right) \right\vert = o_{\PP}(1),	\]
and the proof is finished.

\subsection{Proof of Theorem~\ref{theorem:F multiplicative consistency}}

By the uniform consistency of the halfspace depth \eqref{HD consistency} we can bound for $n$ large enough
	\[
	\begin{aligned}
	\sup_{x \in \R^d \colon \HD\left(x;P_n\right) \geq \alpha} \left\vert \frac{\RHD\left(x;P_n\right)}{\HD\left(x;P\right)} - 1 \right\vert & \leq \sup_{x \in \R^d \colon \HD\left(x;P_n\right) \geq \alpha} 2 \left\vert \frac{\RHD\left(x;P_n\right) - \HD\left(x;P\right)}{\HD\left(x;P_n\right)}\right\vert	\\
	& \leq \frac{2}{\alpha} \sup_{x \in \R^d} \left\vert \HD\left(x;P_n\right) - \HD\left(x;P\right) \right\vert, 
	\end{aligned}
	\]
where the last term vanishes almost surely as $n\to\infty$. Thus, in the notation established in \eqref{a notation} in the proof of Theorem~\ref{theorem:M consistency}, it suffices to show that also the right hand size of
	\begin{equation*}	
	\sup_{x \in K_n \setminus P_{n,\alpha}} \left\vert \frac{F_n\left(a_{n,x} b_n\right)}{\HD\left(x;P\right)} - 1 \right\vert \leq \sup_{x \in L_n^{II}} \left\vert \frac{F_n\left(a_{n,x}b_n\right)}{F\left( a_x b \right)} - 1 \right\vert + \sup_{x \in L_n^{III}} \left\vert \frac{F_n\left(a_{n,x}b_n\right)}{\HD\left(x;P\right)} - 1 \right\vert	
	\end{equation*}
is asymptotically negligible, where $L_n^{II} = K_n \setminus \left( P_{n,\alpha} \cup P_\alpha \right)$ and $L_n^{III} = K_n \cap \left(P_\alpha \setminus P_{n,\alpha}\right)$. We used \eqref{elliptical depth} and \eqref{illumination for measure} to obtain the expression on the right hand side. We already have everything prepared to bound the second summand above. Indeed, by Theorem~\ref{theorem:F consistency}
	\[
	\begin{aligned}
	\sup_{x \in L_n^{III}} \left\vert \frac{F_n\left(a_{n,x}b_n\right)}{\HD\left(x;P\right)} - 1 \right\vert & \leq \sup_{x \in L_n^{III}} \frac{\left\vert \RHD\left(x;P_n\right) - \HD\left(x;P\right) \right\vert}{\alpha} \\
	& \leq \frac{1}{\alpha} \sup_{x \in K_n} \left\vert \RHD\left(x;P_n\right) - \HD\left(x;P\right) \right\vert = o_{\PP}\left(1\right).
	\end{aligned}
	\]  

Let us now focus on the supremum over $L_n^{II}$. For $x \in L_n^{II}$ we can write
	\begin{equation}	\label{multiplicative consistency expansion}
	\begin{aligned}
	\left\vert \frac{F_n\left( a_{n,x} b_n \right)}{F(a_x b)} - 1 \right\vert & = \left\vert \frac{F(a_{n,x} b_n)}{F(a_x b)} \right\vert \left\vert \frac{F_n\left(a_{n,x} b_n\right)}{F\left(a_{n,x} b_n\right)} - \frac{F(a_x b)}{F(a_{n,x} b_n)} \right\vert \\
	& \leq \left\vert \frac{F(a_{n,x} b_n)}{F(a_x b)} \right\vert \left( \left\vert \frac{F_n\left(a_{n,x} b_n\right)}{F\left(a_{n,x} b_n\right)} - 1 \right\vert + \left\vert \frac{F(a_x b)}{F(a_{n,x} b_n)} - 1 \right\vert \right).
	\end{aligned}
	\end{equation}
In the same way as in \eqref{a b bound}, \eqref{a bound}, \eqref{an a bound} and \eqref{a b rate} in the proof of Theorem~\ref{theorem:M consistency} we have, using \eqref{quantile rate2}, that
	\begin{equation}	\label{a and a b rates}
	\begin{aligned}
	\sup_{x \in L_n^{II}} \left\vert a_{n,x} - a_x \right\vert & = \mathcal O_{\PP} \left( \omega_n \right),	\\
	\sup_{x \in L_n^{II}} \left\vert a_{n,x} b_n - a_x b\right\vert & = \mathcal O\left(R_n\right) \mathcal O_{\PP}\left(\xi_n\right) + \mathcal O_{\PP}\left( \omega_n \right) = \mathcal O_{\PP} \left(\max\left\{R_n \xi_n, \omega_n \right\}\right),	\\
	b \sup_{x \in L_n^{II}} \left\vert a_x \right\vert & = R_n/\sqrt{\lambda} = \mathcal O\left(R_n\right).
	\end{aligned}
	\end{equation}
By the definition of the refined depth \eqref{refined depth} we also see that $a_{n,x} < -1$ for any $x\in L_n^{II}$. Combine this with \eqref{a and a b rates} to obtain that there exists $c>0$ such that for all $n \geq 1$ and $x \in L_n^{II}$ we can write $\left(1 - c\, \omega_n \right) b < \left\vert a_x b \right\vert$, which means that for $n$ large enough $b/2 < \left\vert a_x b \right\vert < R_n/\sqrt{\lambda}$ for all $x \in L_n^{II}$. Formulas \eqref{a and a b rates} therefore allow us to write for some $c>0$ large enough for all $\varepsilon>0$ and $n$ large
	\begin{equation*}	
	\begin{aligned}
	\PP & \left( \sup_{x \in L_n^{II}} \left\vert \frac{F(a_x b)}{F(a_{n,x} b_n)} - 1 \right\vert > \varepsilon \right) \leq \PP\left( \sup_{x \in L_n^{II}} \left\vert a_{n,x} b_n - a_x b \right\vert > c\, \max\left\{R_n \xi_n, \omega_n \right\} \right) \\
	& + \PP\left( \sup_{x \in L_n^{II}} \left\vert a_{n,x} - a_x \right\vert > c\, \omega_n \right) + \PP\left( \sup_{\substack{b/2 < \left\vert s \right\vert < R_n/\sqrt{\lambda} \\ \left\vert s - t \right\vert < c\, \max\left\{R_n \xi_n, \, \omega_n \right\}}} \left\vert \frac{F(s)}{F(t)} - 1 \right\vert > \varepsilon\right). \\
	%& = \PP\left( \sup_{x \in K_n} \left\vert a_{n,x} b_n - a_x b \right\vert \geq c_2 \omega_n \right) + \I{\sup_{\substack{1/b < \left\vert s \right\vert < c_1 R_n \\ \left\vert s - t \right\vert < c_2 \omega_n}} \left\vert \frac{F(s)}{F(t)} - 1 \right\vert > \varepsilon},
	\end{aligned}
	\end{equation*}
The first two summands on the right hand side vanish with $n\to\infty$ because of the first two formulas in \eqref{a and a b rates}. The argument in the last summand is non-random, and the probability is therefore equal to zero for $n$ large by \eqref{F multiplicative continuity}. Thus,
	\begin{equation}	\label{convergence continuity}
	\sup_{x \in L_n^{II}} \left\vert \frac{F(a_x b)}{F(a_{n,x} b_n)} - 1 \right\vert \xrightarrow[n\to\infty]{\PP}0.	
	\end{equation}
	
Using similar argumentation we have that there is $c>0$ with the property that for all $\varepsilon>0$ and $n$ large enough
	\[	
	\begin{aligned}
	\PP & \left( \sup_{x \in L_n^{II}} \left\vert \frac{F_n\left(a_{n,x} b_n\right)}{F\left(a_{n,x} b_n\right)} - 1 \right\vert > \varepsilon \right) \\
	& \leq \PP\left( \sup_{x \in L_n^{II}} \left\vert a_{n,x} b_n - a_x b \right\vert > c\, \omega_n \right) + \PP\left( \sup_{\left\vert a_{n,x} b_n \right\vert < R_n/\sqrt{\lambda} + c\, \omega_n} \left\vert \frac{F_n\left(a_{n,x} b_n \right)}{F\left(a_{n,x} b_n\right)} - 1 \right\vert > \varepsilon \right) \\	
	& \leq \PP\left( \sup_{x \in L_n^{II}} \left\vert a_{n,x} b_n - a_x b \right\vert > c\, \omega_n \right) + \PP\left( \sup_{\left\vert t \right\vert < 2 R_n /\sqrt{\lambda}} \left\vert \frac{F_n\left(t\right)}{F\left(t\right)} - 1 \right\vert > \varepsilon \right),
	\end{aligned}
	\]
and the last expression tends to zero in as $n\to\infty$ thanks to the second rate in \eqref{a and a b rates} and \eqref{F multiplicative consistency}. Thus,
	\begin{equation}	\label{convergence consistency}
	\sup_{x \in L_n^{II}} \left\vert \frac{F_n\left(a_{n,x} b_n\right)}{F\left(a_{n,x} b\right)} - 1 \right\vert \xrightarrow[n\to\infty]{\PP}0.	
	\end{equation}

Altogether, we can start from \eqref{multiplicative consistency expansion} and bound
	\[	\sup_{x \in L_n^{II}} \left\vert \frac{F_n\left(a_{n,x} b_n\right)}{F\left(a_x b\right)} - 1 \right\vert \leq \sup_{x \in L_n^{II}} \left\vert \frac{F(a_{n,x} b_n)}{F(a_x b)} \right\vert \left( \sup_{x \in L_n^{II}} \left\vert \frac{F_n\left(a_{n,x} b_n\right)}{F\left(a_{n,x} b_n\right)} - 1 \right\vert + \sup_{x \in L_n^{II}} \left\vert \frac{F(a_x b)}{F(a_{n,x} b_n)} - 1 \right\vert \right). 	\]
The first term on the right hand side is bounded in probability due to \eqref{convergence continuity}. The summands vanish in probability thanks to \eqref{convergence consistency} and \eqref{convergence continuity}, respectively. The theorem is proved.

\subsection{Proof of Remark~\ref{R7}}

Suppose first that $F(t) \geq c\, \left\vert t \right\vert^\gamma$ for some $c>0$, $t$ small enough and $\gamma < 0$. Consider $R_n = \mathcal O(n^\alpha)$ for $\alpha > 0$. We have
	\[	\frac{\omega_n}{F\left(-R_n/\sqrt{\lambda}\right)} \leq \frac{\left(R_n^{d-1}/\sqrt{n}\right)^{2/(d+1)}}{c \, \left\vert R_n/\sqrt{\lambda} \right\vert^\gamma} = \mathcal O \left( n^{2 \alpha (d-1)/(d+1) - 1/(d+1) - \alpha \gamma} \right).	\]
For the right hand side to be $o(1)$, it is sufficient that $\alpha < \left(2 (d-1) - \gamma (d+1) \right)^{-1}$.
	
If $F(t) \geq c\, e^{-\left\vert t \right\vert^\gamma}$ for some $c>0$, $\gamma > 0$ and all $t$ small enough, we get for $R_n  \leq \left( \left(\frac{1}{d+1} - \varepsilon\right) \log(n) \right)^{1/\gamma} \sqrt{\lambda}$ and for $\varepsilon > 0$ 
	\[	\frac{\omega_n}{F\left(-R_n/\sqrt{\lambda}\right)} \leq \frac{\left(R_n^{d-1}/\sqrt{n}\right)^{2/(d+1)}}{c \, e^{-\left\vert R_n/\sqrt{\lambda} \right\vert^\gamma}} \leq \frac{R_n^{2(d-1)/(d+1)}}{c\,n^\varepsilon} = o(1).	\]
	
For $F(t) \geq c\, \exp\left(-e^{\left\vert t \right\vert^\gamma}\right)$ for some $c>0$, $\gamma > 0$ and all $t$ small enough, $R_n \leq \left( \log\left( \frac{1}{d+1} - \varepsilon\right) + \log \log n \right)^{1/\gamma} \sqrt{\lambda}$ with $\varepsilon > 0$ gives
	\[	\frac{\omega_n}{F\left(-R_n/\sqrt{\lambda}\right)} \leq \frac{\left(R_n^{d-1}/\sqrt{n}\right)^{2/(d+1)}}{c\, \exp\left(-e^{\left\vert R_n/\sqrt{\lambda} \right\vert^\gamma}\right)} \leq \frac{R_n^{2(d-1)/(d+1)}}{c\, n^{\varepsilon}} = o(1).	\]

\subsection{Proof of Theorem~\ref{theorem:QDA}} 	\label{appendix:classification}

The logarithm of the density of $P^{(j)}$ at $x \in \R^d$ can be written as
	\[	\log\left(f_j(x)\right) = - \log\left(\sqrt{(2\pi)^d}\right) - \log\left( \sqrt{\left\vert \Sigma_j \right\vert} \right) -\frac{1}{2}\dist[\Sigma_j]\left(x,\mu_j\right)^2.	\]
By \eqref{elliptical depth} we know that for any $0 < \delta < 1/2$
	\[	
	\begin{aligned}
	\vol{P_\delta^{(j)}} & = \vol{\left\{y \in \R^d \colon \Phi\left(-\dist[\Sigma_j]\left(y,\mu_j\right)\right) \geq \delta \right\}} = \vol{\left\{ y \in \R^d \colon \dist[\widetilde{\Sigma}_j]\left(y,\mu_j\right) \leq 1 \right\}} \\
	& = \vol{\widetilde{\Sigma}_j^{1/2} \B + \mu_j} = \left\vert \widetilde{\Sigma}_j^{1/2} \right\vert \vol{\B} = \Phi^{-1}\left( 1 - \delta \right)^d \sqrt{\left\vert \Sigma_j \right\vert} \vol{\B},	
	\end{aligned}
	\]
where $\widetilde{\Sigma}_j = \Phi^{-1}\left( 1 - \delta \right)^2 \Sigma_j$. Thus, 
	\[	2 \log\left(\pi_j f_j(x)\right) = 2 \log\left(\frac{\Phi^{-1}\left( 1 - \delta \right)^d \vol{\B}}{\sqrt{(2\pi)^d}}\right) + 2 \log\left(\frac{\pi_j}{\vol{P_\delta^{(j)}}} \right) -\dist[\Sigma_j]\left(x,\mu_j\right)^2,	\]
and $\pi_1 f_1(x) > \pi_2 f_2(x)$ if and only if \eqref{classification rule} is true.

The uniform consistency follows from Theorem~\ref{theorem:M consistency}, formula \eqref{Dyckerhoff}, and Lemma~\ref{lemma:Lipschitz}.

\renewcommand{\theequation}{B.\arabic{equation}}
\renewcommand{\thetable}{B.\arabic{table}}
\renewcommand{\thefigure}{B.\arabic{figure}}

\section{Additional simulations and results}	\label{appendix:simulations}

\subsection{Robust classification}

\subsubsection{Bivariate normal distribution, location difference}\label{subsubsec:normal location}

We repeat the same classification experiment as in Section~\ref{subsubsec:normal location scale}, with $P^{(1)} = P_X, P^{(2)} = P_{X+(2,2)\tr}, P^{(3)} = P_{X+(20,20)^T}$. This accounts for classification in presence of only location difference. The results are summarized in Figure~\ref{fig:classification normal location} and Table~\ref{tab:classificationNormalContaminationLocation}. We observe similar results as in Section~\ref{subsubsec:normal location scale}: the optimal (Bayes) error rate is nearly achieved by the illumination-based approach and the classical QDA. The approach based on the refined depth performs worse, especially in the extremes, and it is very sensitive to possible contamination.

\begin{figure}[htpb]
\includegraphics[width=0.45\textwidth]{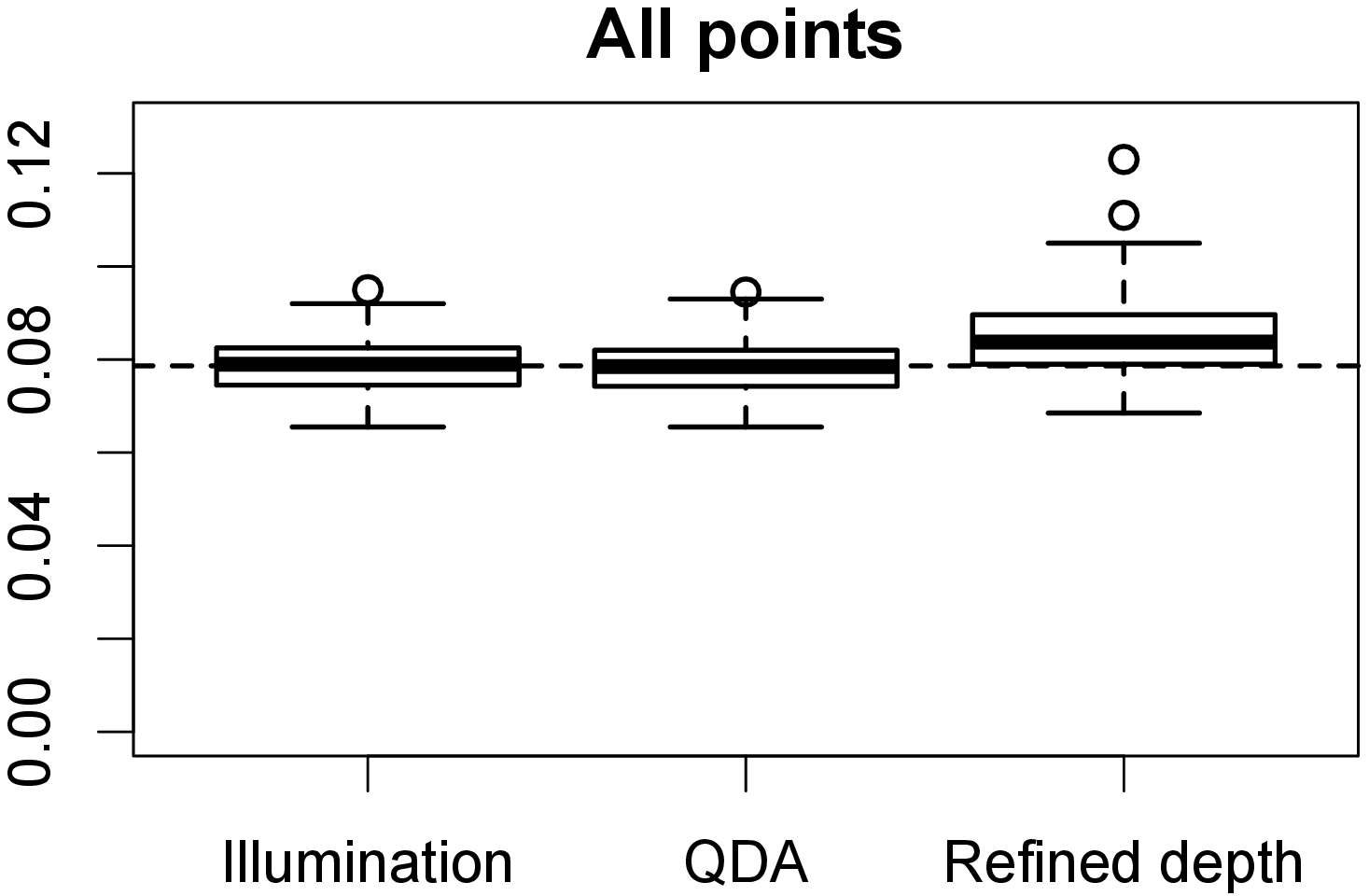}
\includegraphics[width=0.45\textwidth]{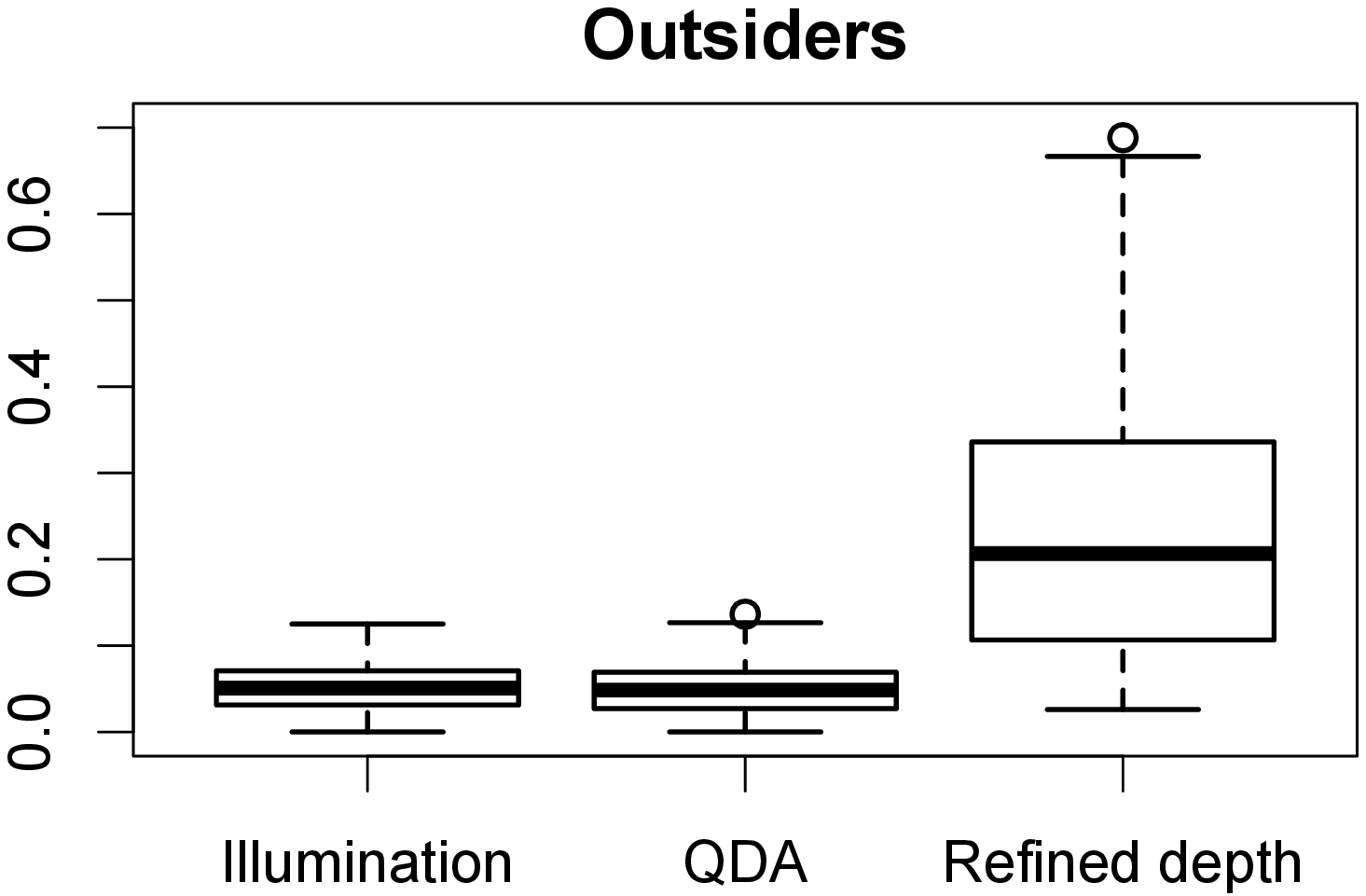}
\caption{Misclassification rates, based on 100 replications of the experiment with two bivariate normal distributions with different location and same scale. Based on all testing points (left panel) or the outsiders (right panel). Dashed horizontal line in the left panel corresponds to the theoretical Bayes error rate.}
\label{fig:classification normal location}
\end{figure}

\begin{table}[htpb]
\resizebox{\textwidth}{!}{
\begin{tabular}{c|c|c|c||c|c|c}
    & \multicolumn{3}{c||}{All points} & \multicolumn{3}{c}{Outsiders} \\ %\hline
    & Illumination & QDA & Ref. depth & Illumination & QDA & Ref. depth \\ \hline
	0 \%   & 0.079 {\footnotesize (0.006)} & 0.079 {\footnotesize (0.006)} & 0.085 {\footnotesize (0.009)} & 0.054 {\footnotesize (0.030)} & 0.051 {\footnotesize (0.031)} & 0.236 {\footnotesize (0.165)} \\
  1 \%   & 0.079 {\footnotesize (0.006)} & 0.089 {\footnotesize (0.007)} & 0.101 {\footnotesize (0.010)} & 0.039 {\footnotesize (0.032)} & 0.059 {\footnotesize (0.056)} & 0.236 {\footnotesize (0.235)} \\
	5 \%   & 0.081 {\footnotesize (0.006)} & 0.118 {\footnotesize (0.010)} & 0.113 {\footnotesize (0.011)} & 0.047 {\footnotesize (0.038)} & 0.065 {\footnotesize (0.052)} & 0.216 {\footnotesize (0.109)} \\
	10 \%  & 0.087 {\footnotesize (0.006)} & 0.135 {\footnotesize (0.012)} & 0.120 {\footnotesize (0.011)} & 0.066 {\footnotesize (0.045)} & 0.069 {\footnotesize (0.054)} & 0.210 {\footnotesize (0.117)} \\ \hline
\end{tabular}}
\caption{Average misclassification rates and their standard deviations (in brackets), bivariate normal distributions with different location and same scale, level of contamination in one of the training samples ranging from 0 to 10~\%. Based on 100 replications of the experiment and all testing points (left part) and outsiders (right part), respectively.}
\label{tab:classificationNormalContaminationLocation}
\end{table}

%%%%%%%%%%%%%%%%%%%%%%%%%%%%%%%%%%%%%%%%%%%%%%%%%%%%%%%%%%%%%%%%%%%%%%%%%%%%%%%%%%

\subsubsection{Bivariate elliptical distribution, location difference}\label{subsubsec:elliptical location}

Finally, consider the experiment from Section~\ref{subsubsec:elliptical location scale} with $P^{(1)} = P_Y, P^{(2)} = P_{Y+(2,2)\tr}, P^{(3)} = P_{X+(20,20)\tr}$. Our results are summarized in Figure~\ref{fig:classification elliptical location} and Table~\ref{tab:classificationEllipticalContaminationLocation}. We observe that in the case with no contamination, the illumination-based approach and the classical QDA, used only as a reference method here, perform slightly better than the method based on the refined depth. If some contamination is present, the robust QDA appears to outperform both competitors.

\begin{figure}[htpb]
\includegraphics[width=0.45\textwidth]{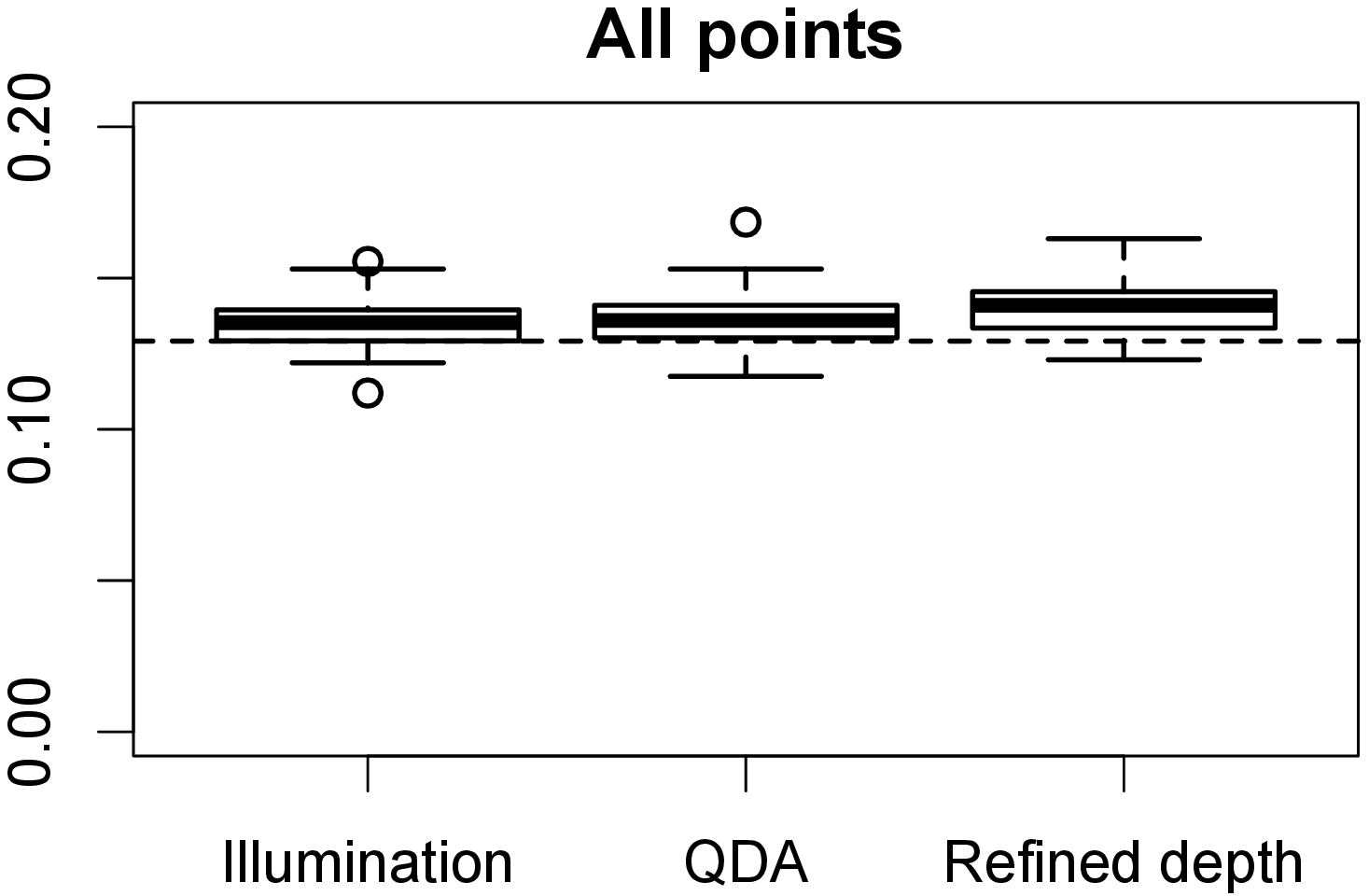}
\includegraphics[width=0.45\textwidth]{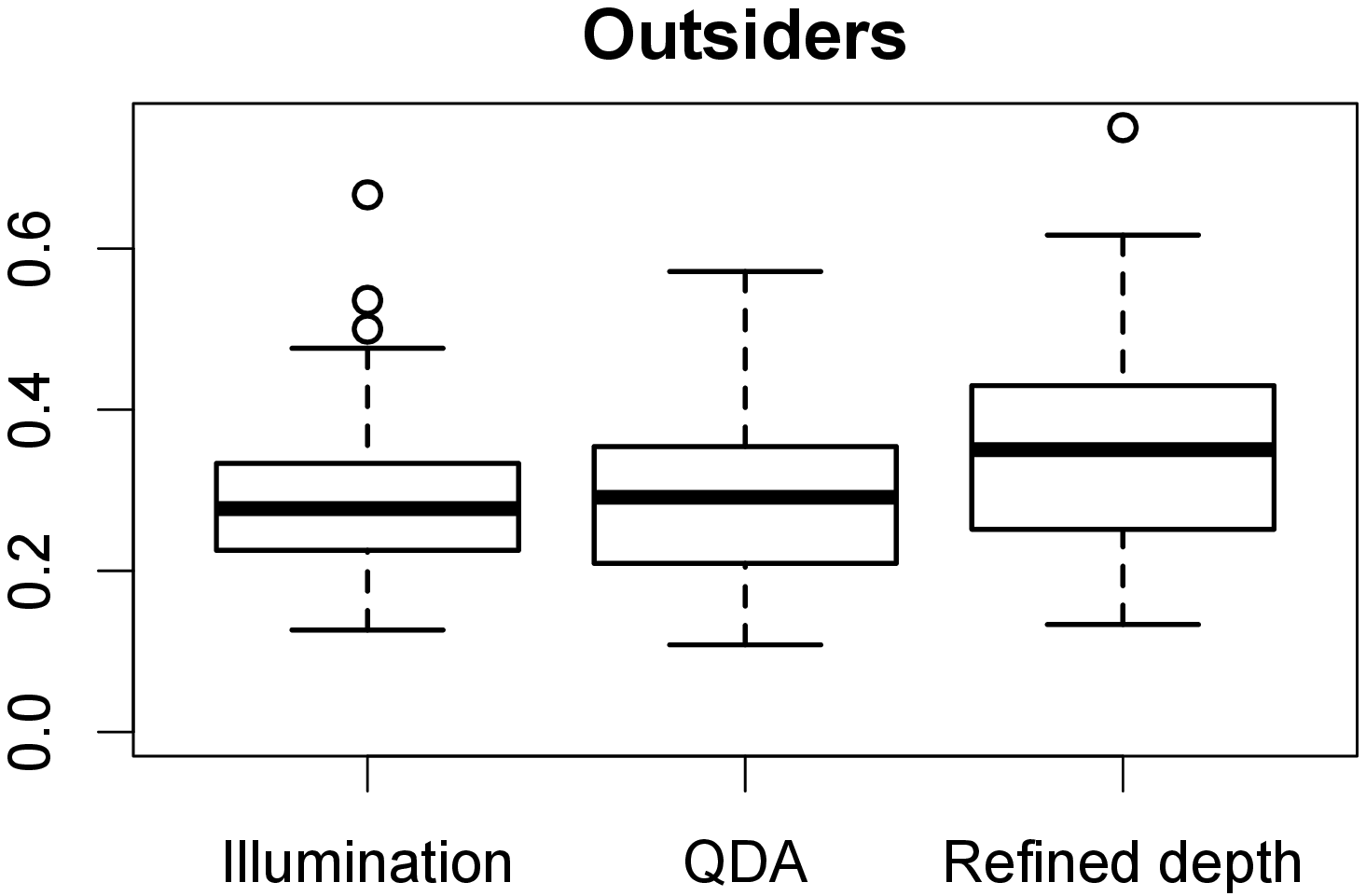}
\caption{Misclassification rates, based on 100 replications of the experiment with two bivariate elliptical distributions with different location and same scale. Based on all testing points (left panel) or the outsiders (right panel). Dashed horizontal line in the left panel corresponds to the theoretical Bayes error rate.}
\label{fig:classification elliptical location}
\end{figure}

\begin{table}[htpb]
\resizebox{\textwidth}{!}{
\begin{tabular}{c|c|c|c||c|c|c}
    & \multicolumn{3}{c||}{All points} & \multicolumn{3}{c}{Outsiders} \\ %\hline
    & Illumination & QDA & Ref. depth & Illumination & QDA & Ref. depth \\ \hline
	0 \%   & 0.135 {\footnotesize (0.007)} & 0.136 {\footnotesize (0.008)} & 0.140 {\footnotesize (0.009)} & 0.286 {\footnotesize (0.089)} & 0.296 {\footnotesize (0.109)} & 0.353 {\footnotesize (0.127)} \\
  1 \%   & 0.134 {\footnotesize (0.008)} & 0.168 {\footnotesize (0.015)} & 0.142 {\footnotesize (0.010)} & 0.317 {\footnotesize (0.111)} & 0.374 {\footnotesize (0.119)} & 0.382 {\footnotesize (0.136)} \\
	5 \%   & 0.140 {\footnotesize (0.007)} & 0.242 {\footnotesize (0.017)} & 0.162 {\footnotesize (0.013)} & 0.285 {\footnotesize (0.092)} & 0.354 {\footnotesize (0.116)} & 0.384 {\footnotesize (0.107)} \\
	10 \%  & 0.165 {\footnotesize (0.014)} & 0.265 {\footnotesize (0.020)} & 0.175 {\footnotesize (0.014)} & 0.311 {\footnotesize (0.110)} & 0.336 {\footnotesize (0.114)} & 0.365 {\footnotesize (0.120)} \\ \hline
\end{tabular}}
\caption{Average misclassification rates and their standard deviations (in brackets), bivariate elliptical distributions with different location and same scale, level of contamination in one of the training samples ranging from 0 to 10~\%. Based on 100 replications of the experiment and all testing points (left part) and the outsiders (right part), respectively.}
\label{tab:classificationEllipticalContaminationLocation}
\end{table}

\section{R source code}	\label{appendix:code}

\begin{verbatim}
library(TukeyRegion)
library(geometry)

Illumination = function(X,x,alpha){
  # X: n-times-d matrix of the sample points (n points in d dimensions)
  # x: vector of length d whose illumination is computed
  # alpha: cut-off value for the illumination
  # returns 
  # I: the illumination of x onto the depth central region
  # (volume of the convex hull of points with hD at least alpha, and x)
  # volPa: volume of the depth central region
  # (volume of the region of points whose hD is at least alpha)
  Pa = TukeyRegion(X,depth=alpha*nrow(X),retVertices=TRUE,retVolume=TRUE)
  volPax = convhulln(rbind(Pa$vertices,x),options="FA")$vol
  return(list(I=volPax,volPa=Pa$volume))
}
\end{verbatim}

\section*{Acknowledgements}
We would like to thank John H. J. Einmahl and Jun Li for sharing the source code of the refined halfspace depth. This work was supported by the grant 19-16097Y of the Czech Science Foundation, and by the PRIMUS/17/SCI/3 project of Charles University.

%\bibliographystyle{apalike}
%\bibliography{allpapers,nagy}

\end{document}